\newcommand \fk[1]{{{\mathfrak #1}}}
\newcommand \C[1]{{\mathcal #1}}
\newcommand \wti[1]{{\widetilde {#1}}}
\newcommand \bC{{\mathbb C}}
\newcommand \bF{{\mathbb F}}
\newcommand \bR{{\mathbb R}}
\newcommand \bZ{{\mathbb Z}}
\newcommand \bQ{{\mathbb Q}}
\newcommand\one{1\!\!1}
\newcommand\CH{{\C H}}
\newcommand\CO{{\C O}}
\newcommand\CR{{\C R}}
\newcommand\ep{{\epsilon}}
\newcommand\al{{\alpha}}
\newtheorem{theorem}{Theorem}[section]
\newtheorem{conjecture}[theorem]{Conjecture}
\newtheorem{corollary}[theorem]{Corollary}
\newtheorem{lemma}[theorem]{Lemma}
\newtheorem{proposition}[theorem]{Proposition}
\theoremstyle{definition}
\newtheorem{definition}[theorem]{Definition}
\newtheorem{remark}[theorem]{Remark}
\newtheorem{example}[theorem]{Example}
\newcommand\Hom{\operatorname{Hom}}
\newcommand\End{\operatorname{End}}
\newcommand\Ext{\operatorname{Ext}}
\newcommand\Ind{\operatorname{Ind}}
\newcommand\ind{\operatorname{ind}}
\newcommand\tr{\operatorname{tr}}
\newcommand\pr{\operatorname{pr}}
\newcommand\triv{\mathsf{triv}}
\newcommand\sgn{\mathsf{sgn}}
\newcommand\refl{\mathsf{refl}}
\newcommand\gen{\mathsf{gen}}
\newcommand\ds{\mathsf{DS}}
\newcommand\el{\mathsf{ell}}
\newcommand\EP{\mathsf{EP}}
\newcommand\even{\mathsf{even}}
\newcommand\odd{\mathsf{odd}}
\newcommand\Irr{\mathsf{Irr}}
\newcommand\class{\mathsf{class}}
\newcommand\Res{\mathsf{Res}}
\newcommand\cyc{\mathsf{cyc}}
\newcommand\uni{\mathsf{un}}
\newcommand\Pin{\mathsf{Pin}}
\newcommand\iso{\mathsf{iso}}
\newcommand\EF{\mathsf{EF}}
\newcommand\Ell{\mathsf{Ell}}
\numberwithin{equation}{subsection}
\begin{document}

\title[Formal degrees and the exotic Fourier transform]{Formal degrees of unipotent discrete series representations
  and the exotic Fourier transform}

\author{Dan Ciubotaru}
        \address[D. Ciubotaru]{Mathematical Institute\\University of Oxford\\Andrew Wiles Building\\
Radcliffe Observatory Quarter\\Woodstock Road \\Oxford\\OX2 6GG\\UK}
       \email{dan.ciubotaru@maths.ox.ac.uk}

\author{Eric M.~Opdam}
\address[E.M. Opdam]{Korteweg-de Vries Institute for Mathematics\\Universiteit van Amsterdam\\Science Park 904\\ 1098 XH Amsterdam, The Netherlands}
\email{e.m.opdam@uva.nl} 

\subjclass[2010]{22E50, 20C08}

\thanks{This research was supported in part by NSF-DMS 1302122, NSA-AMS
  111016, and ERC-advanced grant no. 268105. We thank Roman Bezrukavnikov, George Lusztig,
  Eric Sommers, and
  David Vogan for helpful discussions. We also thank the referee for the careful and detailed review of the paper.}

\begin{abstract}
We introduce a notion of elliptic fake degrees for
unipotent elliptic representations of a semisimple $p$-adic group. We
conjecture, and verify in some cases, that the relation between the formal degrees of unipotent
discrete series representations of a semisimple $p$-adic group and
the elliptic fake degrees is given by the exotic Fourier
transform matrix introduced by Lusztig in the study of
representations of finite groups of Lie type.
\end{abstract}

\maketitle

\setcounter{tocdepth}{1}

\begin{small}
\tableofcontents
\end{small}

\section{Introduction}\label{s:1}

\subsection{}In this paper, we introduce a notion of elliptic fake degrees for
unipotent elliptic representations of a semisimple $p$-adic group. We
conjecture (Conjecture \ref{conj-main} and (\ref{conj-equiv})), and verify in some cases, that the relation between formal degrees of unipotent
discrete series representations of a semisimple $p$-adic group and
the elliptic fake degrees is given by the exotic Fourier
transform matrix introduced by Lusztig \cite{L1} in the study of
representations of finite groups of Lie type. In other words, we
expect that the picture is analogous to that for finite Lie groups,
where the degrees of the unipotent representations are related via the
exotic Fourier transform to their fake degrees.

The formal degrees of discrete series representations of semisimple $p$-adic groups
admit a conjectural description in terms of adjoint gamma factors
\cite{HII,GR}. For unipotent discrete series representations of a
semisimple $p$-adic group of adjoint type, this
conjecture has been verified in \cite{O3,O4}. (For exceptional split
groups, the formula for formal degrees had been known already by \cite{Re3}.) We hope that the present paper
will offer a new perspective on formal degrees from the ``arithmetic side''.

As we explain in section \ref{sec:ell}, see in particular Corollary
\ref{fake-ell-spin}, the elliptic fake degrees admit an interpretation in terms of
characters for the pin double cover of the finite Weyl group, in the
approach of \cite{C,CH,CT}. In this interpretation, the elliptic fake degrees are related to
the spin fake degree polynomials of \cite{BW}. They are also related
to certain specializations of the invariants  considered  in
\cite{GNS} and \cite{Som}.

It would be interesting to investigate how the elliptic fake degrees
fit into the new theory of unipotent almost characters of semisimple $p$-adic
groups initiated in \cite{KmL,L4,L5} and \cite{BKO}.

\subsection{} We give a brief outline of the paper. In section \ref{sec:ell}, we
define the elliptic fake degrees for elliptic representations of the
finite and (extended) affine Weyl group, and we compute them
explicitly for every irreducible Weyl group. The results for
exceptional groups are tabulated in Appendix \ref{sec:exc}. In section \ref{s:2},
we recall basic constructions and definitions for unipotent
representations and Deligne-Lusztig characters of finite groups of Lie type, in particular, the
definition (Definition \ref{d:exotic-Fourier}) and properties of the exotic Fourier transform. In section
\ref{sec:unip-padic}, we present our main conjecture \ref{conj-main},
as well as certain implications of it, in particular formulas
(\ref{conj-equiv}) and 
(\ref{e:conj-second}). We also give there examples in support of the
conjecture. Finally, in section \ref{sec:Iwahori}, we rephrase the
conjecture (Conjecture \ref{conj-ell-a}) in the case of representations with Iwahori fixed vectors via
the homological algebra interpretation of formal degrees from \cite{OS1}.

\section{Elliptic fake degrees}\label{sec:ell}

\subsection{} Let $E$ be the $l$-dimensional (real) reflection representation of the
finite Weyl group $W$. Let $\langle~,~\rangle_W$ denote the character
pairing on the Grothendieck group $R(W)$. Denote 
$${\wedge}_{s} E=\sum_{i\ge 0} s^i {\wedge}^i E,$$
an $s$-graded $W$-representation whose $W$-character is
$w\mapsto{\det}_E(1+sw)$, and
$$S_tE=\sum_{i\ge 0} t^i S^iE,$$
a $t$-graded $W$-representation whose $W$-character is $w\mapsto \frac 1{{\det}_E(1-tw)}.$

The elliptic pairing of $W$ is $\langle~,~\rangle^\el_W: R(W)\times R(W)\to \bZ$:
\begin{equation}
\begin{aligned}
\langle\sigma,\sigma'\rangle^\el_W=\langle \sigma,\sigma'\otimes
\wedge_{-1} E\rangle_W=\frac 1{|W|}\sum_{w\in
  W}\sigma(w)\sigma'(w){\det}(1-w).
\end{aligned}
\end{equation}

The radical of the form $\langle~,~\rangle^\el_W$ equals the lattice
$R_{\ind}(W)$ 
of parabolically induced $W$-characters, \cite[section 2.1]{R}. Let $\overline R(W)=R(W)/\text{rad}\langle~,~\rangle^\el_W$ be the
space of virtual elliptic representations of $W$. 

An element $w\in W$ is called elliptic if $\det(1-w)\neq 0.$ It is clear that the subset of elliptic elements of $W$ is closed under conjugation. A conjugacy class in $W$ is called elliptic if it consists of elliptic elements.

\begin{proposition}\label{p:ind-ell}
Suppose that $W$ is irreducible of classical type. The set of rational functions
$\left\{\frac 1{\det(1-qw)}\right\}\subset \bQ(q)$, where $w$ varies over a set
of representatives of elliptic conjugacy classes in $W$, is
$\bZ$-linear independent.
\end{proposition}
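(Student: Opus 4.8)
The plan is to exploit the classification of elliptic conjugacy classes in the classical Weyl groups and the explicit form of the characteristic polynomial $\det(1-qw)$ on each such class. For $W$ of type $A_{l-1}$ the only elliptic class is the Coxeter class (a single $l$-cycle), so the statement is trivial. For types $B_l/C_l$ and $D_l$, the elliptic conjugacy classes are parametrized by partitions (respectively by pairs of partitions with the empty negative part, with a parity constraint in type $D$): an elliptic element in $W(B_l)$ corresponds to a partition $\lambda=(\lambda_1,\dots,\lambda_r)$ of $l$, all cycles being ``negative'', and one computes directly that
\begin{equation}
\det(1-qw_\lambda)=\prod_{i=1}^r(1+q^{\lambda_i}).
\end{equation}
In type $D_l$ the same formula holds, with $r$ forced to be even (and the two classes attached to a partition into even parts only need a fixed choice of representative). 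Thus in every classical type the set of rational functions in question is, up to the trivial type-$A$ case, the set $\{\prod_i (1+q^{\lambda_i})^{-1}\}$ as $\lambda$ runs over partitions of $l$ (even length in type $D$).

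Next I would reduce the $\bZ$-linear independence of these rational functions to a statement about their partial-fraction expansions, or equivalently about their behaviour at the relevant roots of unity. The cleanest route: order the partitions $\lambda$ of $l$ by, say, the lexicographic order on the sorted parts, and look at the pole of $\prod_i(1+q^{\lambda_i})^{-1}$ at $q=-1$. The order of this pole equals the number of \emph{odd} parts of $\lambda$; more refined, the order of the pole at a primitive $2m$-th root of unity $\zeta$ equals the number of parts $\lambda_i$ with $m\mid\lambda_i$ and $\lambda_i/m$ odd. A suitable combination of the pole data at $q=-1,$ at primitive $4$th roots of unity, at primitive $6$th roots, and so on, separates all partitions: given a nontrivial $\bZ$-linear relation $\sum_\lambda c_\lambda\prod_i(1+q^{\lambda_i})^{-1}=0$, pick among the $\lambda$ with $c_\lambda\ne 0$ one, say $\mu$, maximizing (the order of the pole at $q=-1$, then break ties by the order at $\zeta_4$, then $\zeta_6$, $\dots$, in a fixed priority order); clearing denominators and evaluating the appropriate iterated residue at the corresponding root of unity kills every other term and forces $c_\mu=0$, a contradiction. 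So the main content is the bookkeeping lemma that this residue vector, indexed by partitions, is ``triangular'' with respect to some total order — i.e. that no two distinct partitions of $l$ have the same full collection of pole orders at all roots of $-1$ of all orders, and that one can extract a genuine triangular system. Equivalently one can phrase it as: the map sending $\lambda$ to the multiset $\{\lambda_i\}$ is obviously injective, and the pole orders at the various roots of unity recover, for each $m$, the number of parts divisible by $m$ with odd quotient; from these numbers one reconstructs the multiset of parts by an inclusion–exclusion / Möbius inversion over divisibility, hence $\lambda$ itself.

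The step I expect to be the real obstacle is making that last reconstruction argument airtight \emph{as a triangularity statement over $\bZ$}, rather than just ``the data determine $\lambda$''. Knowing that a linear map is injective on basis vectors does not by itself give linear independence of the images unless one exhibits, say, a well-ordering in which the matrix is triangular with nonzero diagonal, or invokes that the images lie in a space where an injective set map on a basis automatically yields independence (which is false in general). The fix I would pursue: replace ``pole order at a single root of unity'' by the genuine Laurent-coefficient vector at $q=-1$ after the substitution $q=-e^{\epsilon}$, so that $\prod_i(1+q^{\lambda_i})^{-1}$ has a concrete Laurent expansion in $\epsilon$ whose leading coefficient is an explicit nonzero rational depending on $\lambda$; then combine expansions at $q=-\zeta e^\epsilon$ for $\zeta$ ranging over all roots of unity appearing, and argue that the resulting (infinite, but locally finite) coefficient matrix, suitably ordered by (total pole count across all relevant $\zeta$, refined lexicographically), is upper-triangular with nonzero diagonal entries — which does force $\bZ$-linear independence. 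Type $D$ needs one extra remark, namely that the partitions into even parts contribute a single function each after the chosen normalization, so the indexing set is still in bijection with the relevant partitions and the triangularity argument goes through verbatim.
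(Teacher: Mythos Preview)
Your setup matches the paper's exactly: type $A$ is trivial, and in types $B/C$ and $D$ the functions in question are $\prod_i(1+q^{\lambda_i})^{-1}$ for partitions $\lambda$ of $l$ (even length in type $D$), so the type-$D$ case is a subset of the type-$B$ case once that is done. (A minor point: the elliptic $B_n$-classes do not split in $D_n$, so the remark about ``two classes attached to a partition into even parts'' is unnecessary.)

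Where your argument diverges from the paper is in the independence step, and there your proposed priority ordering (pole at $-1$, then $\zeta_4$, then $\zeta_6$, \dots) does not yield a triangular system. The trouble is that different partitions carry their dominant poles at \emph{different} roots of unity, so no single fixed priority list isolates one partition at a time. For $n=4$, for instance, once $(1^4)$ is removed by the order-$4$ pole at $q=-1$, both $(3,1)$ and $(2,1,1)$ have order-$2$ poles there, while $(2,2)$ has its dominant pole at $\zeta_4$; your tie-breaker at $\zeta_4$ then sees $(2,2)$ (pole order $2$) before $(2,1,1)$ (pole order $1$), but $(2,2)$ was not even in the tied set. The M\"obius-inversion observation that the full pole-order vector determines $\lambda$ is correct but, as you say yourself, does not give linear independence.

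The paper's fix is a clean induction on $n$ that amounts to reordering your priority list: stratify by the \emph{largest part} $\alpha_1=k$, not by the order of the root of unity. The point is that among partitions with $\alpha_1\le k$, only those with $\alpha_1=k$ have any pole at a primitive $2k$-th root $\zeta$ (since $\alpha_i<k$ forces $1+\zeta^{\alpha_i}\neq 0$), and the pole order there is exactly the multiplicity of $k$ in $\alpha$. So: multiply by the highest power of $1+q^k$ present and specialize at $\zeta$ to peel off the partitions with maximal multiplicity of $k$; the surviving relation among those is, after dividing out $(1+q^k)^m$, a relation among the $f_\gamma$ for $\gamma\vdash n-mk$ with $\gamma_1<k$, which is handled by the inductive hypothesis. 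This is precisely your Laurent-expansion idea made concrete, with the crucial change that the root of unity you expand at is chosen adaptively (primitive $2k$-th for the current largest part $k$), not in a fixed global order. If you rewrite your triangularity in reverse-lexicographic order on partitions and take iterated residues at the primitive $2\alpha_1$-th root, you recover exactly the paper's inductive argument.
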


\begin{proof}
We verify the claim case by case as part of subsections \ref{sec:A-ell}-\ref{sec:ell-D}.
\end{proof}

\begin{remark}
The claim in Proposition \ref{p:ind-ell} is not necessarily true for
exceptional Weyl groups. In $F_4$,
there are $9$ elliptic conjugacy classes, see \cite[Table 8]{Ca2}. The
two classes labelled $D_4$ and $C_3\times A_1$ have the same
characteristic polynomial $\det(1-qw)=(q^3+1)(q+1)$.
\end{remark}

\begin{corollary}\label{c:ind-ell}
When $W$ is classical, the map $\langle~,\frac 1{\det(1-q\cdot~)}\rangle_W^\el: \overline
R(W)\to \bQ(q)$ is injective.
\end{corollary}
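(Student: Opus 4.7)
The plan is to reduce injectivity directly to Proposition \ref{p:ind-ell} by unwrapping the definition of the elliptic pairing and exploiting that only elliptic elements contribute. Writing out the pairing, we have for $\sigma\in R(W)$:
\[
\Big\langle \sigma, \tfrac{1}{\det(1-q\cdot)}\Big\rangle_W^\el
=\frac{1}{|W|}\sum_{w\in W} \sigma(w)\,\frac{\det(1-w)}{\det(1-qw)}.
\]
Because $\det(1-w)=0$ for every non-elliptic $w$, the sum collapses to a sum over the elliptic locus $W_\el$, and grouping the elliptic elements by conjugacy classes $[w_1],\dots,[w_r]$ gives
\[
\Big\langle \sigma, \tfrac{1}{\det(1-q\cdot)}\Big\rangle_W^\el
=\sum_{i=1}^r \frac{\sigma(w_i)\,\det(1-w_i)}{|Z_W(w_i)|}\cdot\frac{1}{\det(1-qw_i)}.
\]

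First I would observe that the coefficients $c_i:=\sigma(w_i)\det(1-w_i)/|Z_W(w_i)|$ lie in $\bQ$. If the image of $\sigma$ vanishes in $\bQ(q)$, then Proposition \ref{p:ind-ell} (applicable since $W$ is of classical type) forces every $c_i=0$. Since $\det(1-w_i)\neq 0$ by ellipticity, this yields $\sigma(w_i)=0$ for all elliptic classes $[w_i]$, i.e.\ $\sigma$ vanishes on the elliptic locus of $W$.

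The second key step is to note that the elliptic pairing $\langle\sigma,\sigma'\rangle_W^\el$ depends only on the values of $\sigma$ on elliptic elements, because $\det(1-w)$ vanishes off $W_\el$. Hence a class function that vanishes on $W_\el$ lies in the radical of $\langle~,~\rangle_W^\el$, which by definition is $R_\ind(W)$. Thus the class of $\sigma$ in $\overline R(W)=R(W)/R_\ind(W)$ is zero, which is exactly the injectivity we wanted.

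I do not anticipate any real obstacle: the argument is essentially formal given Proposition \ref{p:ind-ell} and the description of $\mathrm{rad}\langle~,~\rangle^\el_W$ recalled from \cite{R} earlier in the section. The only point of care is ensuring that the linear independence in Proposition \ref{p:ind-ell} is applied to distinct elliptic conjugacy classes (so that the representatives $w_1,\dots,w_r$ in the displayed sum are pairwise non-conjugate), which is automatic from the grouping above.
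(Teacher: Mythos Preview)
Your argument is correct and is essentially a detailed unpacking of the paper's one-line proof, which simply says the claim is ``immediate from Proposition \ref{p:ind-ell}, since the form $\langle~,~\rangle_W^\el$ is nondegenerate on $\overline R(W)$.'' Your explicit reduction to a $\bQ$-linear relation among the $\frac{1}{\det(1-qw_i)}$, followed by the observation that vanishing on the elliptic locus forces membership in the radical, is exactly what that sentence encodes; the only cosmetic point is that invoking $R_{\ind}(W)$ is unnecessary, since you only need that $\sigma$ lies in $\mathrm{rad}\langle~,~\rangle^\el_W$, not the identification of the radical with induced characters.
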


\begin{proof}
This is immediate from Proposition \ref{p:ind-ell}, since the form
$\langle~,~\rangle_W^\el$ is nondegenerate on $\overline R(W).$
\end{proof}

\subsection{}We define elliptic fake degrees for representations of
the finite Weyl group.
\begin{definition}\label{d:ell-fake-finite}
For every class $[\pi]\in \overline R(W)$, we define the elliptic fake
degree of $[\pi]$ to be
\begin{equation}
F_{[\pi]}=(q-1)^l\langle\pi, S_q
E\rangle^\el_W=\frac{(q-1)^l}{|W|}\sum_{w\in W} \pi(w)
\frac{\det(1-w)}{\det(1-qw)}\in \bQ(q).
\end{equation}
When we wish to emphasize the group $W$, we write $F_{[\pi]}^W.$
\end{definition}

\begin{remark}
Since $\wedge_{-1}E\otimes\sgn=(-1)^l\wedge_{-1}E,$ we have $\langle\sigma\otimes\sgn,\sigma'\rangle_W^\el=(-1)^l\langle\sigma,\sigma'\rangle_W^\el.$
\end{remark}

\begin{remark}
The elliptic fake degree of $[\sgn]$ is given by the formula
\begin{equation}
F_{[\sgn]}=(1-q)^l\prod_{i=1}^l\frac{1-q^{m_i}}{1-q^{m_i+1}},
\end{equation}
where $m_i$ are the exponents of $W$, see 
\cite[Chapter 5.5, Ex. 3]{Bou}.
\end{remark}

\subsection{} Consider the Clifford algebra $C(E)$ generated by $E$
with respect to the $W$-invariant product on $E.$ The defining
relation for $C(E)$ is 
\begin{equation}
\xi\cdot\xi'+\xi'\cdot\xi=-2(\xi,\xi'),\ \xi,\xi'\in E.
\end{equation}
Let $C(E)=C(E)_\even+C(E)_\odd$ be the natural $\bZ/2\bZ$-grading of
$C(E)$. 
Let $^t$ be the transpose anti-automorphism of $C(E)$ defined by
$\xi^t=-\xi,$ $\xi\in E$, and $\ep:C(E)\to C(E)$ the automorphism
which is $+1$ on $C(E)_\even$ and $-1$ on $C(E)_\odd.$ The pin group is
\begin{equation}
\Pin(E)=\{a\in C(E)^\times: \ep(a)\cdot E\cdot a^{-1}\subset E,\ a^t=a^{-1}\}.
\end{equation}
Then $p:\Pin(E)\to O(E)$, $p(a)(\xi)=\ep(a)\cdot\xi\cdot a^{-1}$ is a
two-fold cover of $O(E).$ Define the pin cover of $W$:
\begin{equation}
\wti W=p^{-1}(W)\subset \Pin(E).
\end{equation}
Set
\begin{equation}
\wti W'=\begin{cases}\wti W, &\dim E \text{ odd}\\\wti W\cap
  C(E)_\even, &\dim E\text{ even.}\end{cases}
\end{equation}
Let $S^\pm$ denote the two $\wti W$-modules obtained from the
restriction of the basic simple spin modules of $C(E).$ Let $R(\wti
W')_\gen$ denote the $\bZ$-span of irreducible genuine $\wti W'$-representations.

\begin{proposition}[{\cite[Theorem 4.2]{COT}}] The linear map $\iota: \overline R(W)\to R(\wti W')_\gen$,
  $\iota(\sigma)=\sigma\otimes (S^+-S^-)$ is an injective linear map
  such that
\begin{equation}
2\langle\sigma,\sigma'\rangle^\el_W=\langle\iota(\sigma),\iota(\sigma')\rangle_{\wti
W'}.
\end{equation}
For every $[\pi]\in \overline R(W)$, there exist unique associate
orthogonal $\wti W'$-representations $\wti\sigma^\pm_\pi$ such that
$$\iota([\pi])=\wti\sigma^+_\pi-\wti\sigma^-_\pi.$$ Moreover, if $[\pi]$ has
norm $1$ in $\overline R(W),$ then $\wti\sigma^\pm_\pi$ are irreducible. 
\end{proposition}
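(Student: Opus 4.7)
The plan is to deduce the proposition from a character computation on the pin cover. The key identity to establish is a formula for the virtual character $\chi := \tr_{S^+} - \tr_{S^-}$ on $\wti W'$: for every $\wti w\in \wti W'$ projecting to $w\in W$, one has $|\chi(\wti w)|^{2} = |\det_E(1-w)|$ up to a standard sign, and in particular $\chi(\wti w)=0$ whenever $w$ is non-elliptic. This computation takes place inside the Clifford algebra $C(E)$: any lift of $w$ to $\wti W\subset C(E)^\times$ is a product of unit normal vectors to the reflections in a decomposition of $w$, and the trace of such a product on $S^\pm$ can be evaluated via a Pfaffian-type identity whose square gives $\det_E(1-w)$.

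Granting this character formula, the isometry $2\langle\sigma,\sigma'\rangle_W^\el = \langle\iota(\sigma),\iota(\sigma')\rangle_{\wti W'}$ follows by expanding the right-hand side as an average over $\wti W'$, using $\tr_{\iota(\sigma)}(\wti w) = \sigma(w)\chi(\wti w)$, and collapsing through the covering $\wti W'\to W$ (the factor of $2$ keeps track of the kernel, taking the parity of $l=\dim E$ into account). The vanishing of $\chi$ off elliptic classes matches the vanishing of $\det(1-w)$ on the elliptic pairing side, so the two sums coincide term by term. In particular, $\iota$ kills the radical $R_\ind(W)$ of the elliptic form and descends to a map $\overline R(W)\to R(\wti W')_\gen$, and injectivity is automatic because $\langle~,~\rangle_W^\el$ is nondegenerate on $\overline R(W)$.

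For the second assertion, note that both $\sigma\otimes S^+$ and $\sigma\otimes S^-$ are genuine (honest, not virtual) $\wti W'$-representations. Therefore $\iota(\sigma)$, after cancelling common irreducible constituents, can be written uniquely as a difference $\wti\sigma^+_\pi - \wti\sigma^-_\pi$ of two effective $\wti W'$-representations with disjoint irreducible supports. The associate property—that $\wti\sigma^+_\pi$ and $\wti\sigma^-_\pi$ are swapped by the outer $\bZ/2$-action on $\wti W'$ induced by $\wti W\setminus\wti W'$ (for even $l$) or by the grading involution $\ep$ (for odd $l$)—is forced by the fact that this same action exchanges $S^+$ and $S^-$ while fixing $\sigma$. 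Finally, if $[\pi]$ has elliptic norm $1$, the isometry yields $\|\wti\sigma^+_\pi\|^2 + \|\wti\sigma^-_\pi\|^2 = 2$; the associate relation makes the two norms equal, and integrality of inner products forces each $\wti\sigma^\pm_\pi$ to be irreducible.

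The main obstacle is the initial Clifford-algebraic evaluation of $\chi$ on elliptic lifts, together with a careful separation of the cases $l$ even and $l$ odd, where $\wti W'$ plays a structurally different role (full preimage of $W$ versus the even preimage, i.e.\ the kernel of the sign character lifted). Once the character formula and its sign conventions are pinned down, the remainder of the argument reduces to standard orthogonality bookkeeping inside $R(\wti W')_\gen$.
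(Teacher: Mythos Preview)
The paper does not give its own proof of this proposition; it is quoted verbatim as \cite[Theorem 4.2]{COT}. Your outline is correct and is essentially the argument carried out in that reference: the key Clifford-algebraic input is the identity $|\chi_{S^+-S^-}(\wti w)|^2=\det_E(1-w)$ for $\wti w\in\wti W'$ (with vanishing on non-elliptic lifts), after which the isometry, the descent to $\overline R(W)$, and the decomposition/irreducibility statements follow exactly as you describe.
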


This result allows us to rewrite the elliptic fake degree in the
following form.
\begin{corollary}\label{fake-ell-spin} The elliptic fake degree equals
\begin{equation}\label{e:efd-spin}
F_{[\pi]}=\frac 12 (q-1)^l\langle
\wti\sigma_\pi^+-\wti\sigma_\pi^-,S_qE\otimes (S^+-S^-)\rangle_{\wti W'}.
\end{equation}
\end{corollary}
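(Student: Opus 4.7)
The plan is to obtain the corollary by direct substitution, combining Definition \ref{d:ell-fake-finite} with the isometry identity $2\langle\sigma,\sigma'\rangle^\el_W=\langle\iota(\sigma),\iota(\sigma')\rangle_{\wti W'}$ from the preceding Proposition. The graded representation $S_qE=\sum_{i\ge 0}q^iS^iE$ is not a single $W$-module but a formal sum lying in $R(W)\otimes_{\bZ}\bQ(q)$, so I first extend both the elliptic pairing and the map $\iota(\sigma)=\sigma\otimes(S^+-S^-)$ to this setting by $\bQ(q)$-linearity in the appropriate slot. The isometry identity then holds in each $q$-degree and, by linearity, on the whole series.

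Starting from
$$F_{[\pi]}=(q-1)^l\langle\pi,S_qE\rangle^\el_W$$
of Definition \ref{d:ell-fake-finite}, and applying the isometry with $\sigma=\pi$ and $\sigma'=S_qE$, I obtain
$$F_{[\pi]}=\tfrac12(q-1)^l\,\langle\iota(\pi),\,S_qE\otimes(S^+-S^-)\rangle_{\wti W'}.$$
The right-hand pairing depends only on the class $[\pi]\in\overline R(W)$, since the elliptic pairing does and the isometry links the two. By the last assertion of the Proposition, $\iota([\pi])=\wti\sigma^+_\pi-\wti\sigma^-_\pi$ in $R(\wti W')_\gen$; substituting yields formula~(\ref{e:efd-spin}).

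The corollary is essentially a rewriting of the definition via the spin isometry, so there is no substantial obstacle. The one point worth spelling out in a written proof is the compatibility of the isometry with the $q$-grading of $S_qE$, which reduces to applying it to each graded piece $S^iE$ and summing. A written proof might also briefly remind the reader that the dependence on the class $[\pi]$ (rather than on the representative $\pi$) is automatic from the factorisation of the elliptic pairing through $\overline R(W)$, which is built into the definition of the radical.
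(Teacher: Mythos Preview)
Your proposal is correct and matches the paper's approach: the paper offers no explicit proof beyond the sentence ``This result allows us to rewrite the elliptic fake degree in the following form,'' so the intended argument is precisely the direct substitution of the isometry $2\langle\sigma,\sigma'\rangle^\el_W=\langle\iota(\sigma),\iota(\sigma')\rangle_{\wti W'}$ into Definition~\ref{d:ell-fake-finite}, together with $\iota([\pi])=\wti\sigma_\pi^+-\wti\sigma_\pi^-$. Your remark about extending $\bQ(q)$-linearly degree by degree is a reasonable clarification that the paper leaves implicit.
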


\subsection{} The irreducible
$S_n$-characters are parameterized by partitions $\lambda$ of $n$. We
write $\sigma_\lambda$ for the irreducible $S_n$-representation. In
this notation, $\sigma_{(n)}=\triv$ and $\sigma_{(1,1,\dots,1)}=\sgn.$
When $\lambda=(\lambda_1,\lambda_2,\dots,\lambda_k)$ is viewed as a
left justified decreasing Young diagram, and $(i,j)$ a box in
$\lambda$, denote
$$|\lambda|=k,\ n(\lambda)=\sum_{i\ge 1} (i-1)\lambda_i,\ c(i,j)=j-i,$$
and $h(i,j)$ the hook length of $(i,j).$ 

The conjugacy classes of $S_n$ are parameterized via the cycle decomposition by partitions $\al$
of $n$. Denote by $C^{S_n}_\al$ the conjugacy class.

We will need the following
combinatorial formula.

\begin{proposition}[{\cite[\S I.2, Ex. 5, \S I.3, Ex. 3]{Mac},
    cf. \cite[Proposition 3.1]{GNS}}] Let $E_n=\bC^n$ be the permutation
  representation of $S_n$.
The multiplicity of $\sigma_\lambda$ in $S_tE_n\otimes\wedge_s E_n$ equals
\begin{equation}\label{KP-comb}
g_\lambda(t,s)=\langle \sigma_\lambda,S_tE_n\otimes\wedge_s E_n\rangle_{S_n}=\frac{t^{n(\lambda)} \prod_{(i,j)\in\lambda} (1+s t^{c(i,j)})}{\prod_{(i,j)\in\lambda}(1-t^{h(i,j)})}.
\end{equation}
\end{proposition}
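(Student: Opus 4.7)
The plan is to identify $g_\lambda(t,s)$ with a plethystic specialization of $s_\lambda$ and then invoke the super hook-content formula of Macdonald. First I would compute, for $w\in C^{S_n}_\mu$, the identities $\det_{E_n}(1-tw)=\prod_i(1-t^{\mu_i})$ and $\det_{E_n}(1+sw)=\prod_i(1-(-s)^{\mu_i})$: both follow from the fact that a single $\mu_i$-cycle acts on $\bC^{\mu_i}$ with eigenvalues the $\mu_i$-th roots of unity. Hence the character of $S_tE_n\otimes\wedge_sE_n$ at $w$ equals $\prod_i(1-(-s)^{\mu_i})/(1-t^{\mu_i})$.

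Next, I would transfer the computation to symmetric functions via the Frobenius characteristic $\operatorname{ch}\colon R(S_n)\to\Lambda^n$, which is an isometry sending $\sigma_\lambda\mapsto s_\lambda$. Applying the standard exponential formula
$$\sum_\mu\frac{p_\mu}{z_\mu}\prod_if(\mu_i)=\exp\!\Bigl(\sum_{m\ge 1}\frac{f(m)p_m}{m}\Bigr)$$
(cf.\ Macdonald I.2 Ex.~5) with $f(m)=(1-(-s)^m)/(1-t^m)$ and summing over $n$ yields
$$\sum_{n\ge 0}\operatorname{ch}(S_tE_n\otimes\wedge_sE_n)=\exp\!\Bigl(\sum_{k\ge 1}\frac{p_k}{k}\cdot\frac{1-(-s)^k}{1-t^k}\Bigr)=\prod_{i\ge 1}\prod_{k\ge 0}\frac{1+st^ky_i}{1-t^ky_i}.$$
Equivalently, $g_\lambda(t,s)=\varepsilon_{s,t}(s_\lambda)$, where $\varepsilon_{s,t}\colon\Lambda\to\bQ(s,t)$ is the algebra homomorphism defined by $p_k\mapsto(1-(-s)^k)/(1-t^k)$.

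The value $\varepsilon_{s,t}(s_\lambda)$ is the principal specialization of the super Schur function at $X=(1,t,t^2,\ldots)$ and $Y=(s,st,st^2,\ldots)$. The super hook-content formula (Macdonald I.3 Ex.~3) evaluates this as exactly the right-hand side of (\ref{KP-comb}). The main obstacle is this last step, which is a genuine refinement of the classical principal-specialization formula $s_\lambda(1,t,t^2,\ldots)=t^{n(\lambda)}/\prod_{(i,j)\in\lambda}(1-t^{h(i,j)})$ and is responsible for producing the factor $\prod_{(i,j)\in\lambda}(1+st^{c(i,j)})$. It may be proved either by applying Jacobi--Trudi to super Schur functions and evaluating the resulting determinant explicitly, or combinatorially by enumerating super semistandard tableaux of shape $\lambda$ with content-weighted cells; the cleanest presentation simply cites the exercise from Macdonald.
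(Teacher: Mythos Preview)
The paper does not supply its own proof of this proposition; it simply records the formula and points to Macdonald \cite[\S I.2, Ex.~5; \S I.3, Ex.~3]{Mac} and \cite[Proposition 3.1]{GNS}. Your argument is correct and is essentially the intended derivation behind those citations: the exponential identity for power sums (Macdonald I.2, Ex.~5) transfers the inner product to the specialization $\varepsilon_{s,t}(s_\lambda)$ with $p_k\mapsto(1-(-s)^k)/(1-t^k)$, and the hook--content formula (Macdonald I.3, Ex.~3) evaluates this.

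One small remark on the last step. You invoke the \emph{super} hook--content formula, which is fine, but the paper's citation is to the ordinary finite principal specialization
\[
s_\lambda(1,t,\dots,t^{n-1})=t^{n(\lambda)}\prod_{(i,j)\in\lambda}\frac{1-t^{\,n+c(i,j)}}{1-t^{\,h(i,j)}}.
\]
This already suffices: setting $s=-t^n$ in your specialization gives $\varepsilon_{-t^n,t}(p_k)=(1-t^{nk})/(1-t^k)=p_k(1,t,\dots,t^{n-1})$, so $\varepsilon_{-t^n,t}(s_\lambda)=s_\lambda(1,t,\dots,t^{n-1})$ matches the right-hand side of (\ref{KP-comb}) at $s=-t^n$ for every $n\ge 1$. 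Since both sides of (\ref{KP-comb}) are polynomials of degree $|\lambda|$ in $s$ with coefficients in $\bQ(t)$, agreement at infinitely many $s$-values forces equality. This interpolation bypasses the super-tableaux combinatorics entirely and explains why the citation to the one-variable exercise I.3, Ex.~3 is adequate.
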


\subsection{}\label{sec:A-ell} Suppose $W$ is of type $A_{n-1}$.
The quotient $\overline R(S_n)$
has rank $1$ and it is spanned by $[\sgn].$ The only elliptic
conjugacy class in $S_n$ is the $n$-cycles, which has size
$(n-1)!$. In particular, Proposition \ref{p:ind-ell} is obvious in
this case. We have:
\begin{equation}
\begin{aligned}
F_{[\sgn]}&=\frac{(q-1)^{n-1}}{n!} \sgn((1,2,\dots,n))
\frac{\det(1-(1,2,\dots,n))}{\det(1-q(1,2,\dots,n))} (n-1)!
=\frac{(1-q)^n}{1-q^n},
\end{aligned}
\end{equation}
where we used that $\det(1-q(1,2,\dots,n))=1+q+q^2+\dots+q^{n-1}.$

\subsection{} Suppose $W$ is of type $B_n$. The elliptic conjugacy
classes $C_\al$ are indexed by partitions $\alpha=(\al_1,\dots,\al_k)$
of $n$. Regard $W(B_{\al_1})\times\dots\times W(B_{\al_k})$ naturally
as a subgroup of $W(B_n).$ Then a representative for the conjugacy
class $C^{B_n}_\al$ is $w_\al=\prod w_{\al_i}$, where $w_{\al_i}$ is a
Coxeter element for $W(B_{\al_i}).$  

The irreducible $W(B_n)$-representations are parameterized by pairs of
partitions $(\lambda,\gamma)$ of total size $n$. Let
$\lambda\times\gamma$ denote the irreducible representation. Then
$n\times\emptyset$ is the trivial representation and
$(\lambda\times\gamma)\otimes\sgn=\gamma^t\times\lambda^t.$

\begin{lemma}\label{l:Bn-on}
An orthonormal basis for $\overline R(W(B_n))$ is given by
$\{[\lambda\times\emptyset]:\lambda\vdash n\}$. More precisely, the
map $\sigma_\lambda\mapsto [\lambda\times\emptyset]$ induces an
isomorphism between the lattices $(R(S_n),\langle~,~\rangle_{S_n})$
and $(\overline R(W(B_n)),\langle~,~\rangle_{W(B_n)}^\el).$
\end{lemma}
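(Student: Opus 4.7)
The plan is to compute the elliptic pairing on the proposed basis directly, reducing it to the orthogonality relations for $S_n$-characters, and then to verify surjectivity of the resulting lattice map by a short case analysis of the remaining irreducibles. First, I would identify the elliptic conjugacy classes of $W(B_n)$. Writing $w=(\pi,s)$ with $\pi\in S_n$ and $s\in\{\pm 1\}^n$, a cycle of $\pi$ of length $\ell$ carrying cycle-sign $\epsilon=\pm 1$ (the product of the $s$-values along the cycle) acts on its $\ell$-dimensional coordinate subspace with characteristic polynomial $x^\ell-\epsilon$, contributing a factor $1-\epsilon$ to $\det(1-w)$. Hence $w$ is elliptic iff every cycle of $\pi$ has cycle-sign $-1$, and the elliptic classes are naturally parameterized by partitions $\alpha\vdash n$ recording cycle lengths. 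In particular $\rank\overline R(W(B_n))=p(n)$, matching the cardinality of the proposed basis.

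Since $\lambda\times\emptyset$ is the inflation of $\sigma_\lambda$ along the quotient $W(B_n)\twoheadrightarrow S_n$, we have $\chi_{\lambda\times\emptyset}(\pi,s)=\chi_{\sigma_\lambda}(\pi)$, so
\[
\langle [\lambda\times\emptyset],[\mu\times\emptyset]\rangle_{W(B_n)}^\el
=\frac{1}{|W(B_n)|}\sum_{\pi\in S_n}\chi_{\sigma_\lambda}(\pi)\chi_{\sigma_\mu}(\pi)\,N(\pi),
\]
where $N(\pi)$ denotes the sum of $\det(1-(\pi,s))$ over those $s\in\{\pm 1\}^n$ making $(\pi,s)$ elliptic. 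For $\pi$ of cycle type $\alpha$ with $k$ parts, a short count gives $\prod_i 2^{\alpha_i-1}=2^{n-k}$ elliptic lifts $s$, each contributing $2^k$; hence $N(\pi)=2^n$ independently of $\pi$. Since $|W(B_n)|=2^n n!$, the pairing collapses to $\langle\sigma_\lambda,\sigma_\mu\rangle_{S_n}=\delta_{\lambda,\mu}$, yielding orthonormality and an inner-product preserving injection of lattices $\sigma_\lambda\mapsto[\lambda\times\emptyset]$.

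For surjectivity, every irreducible $\lambda\times\gamma$ of $W(B_n)$ falls into one of three cases. If $\gamma=\emptyset$, its class is already a basis element. If both $\lambda$ and $\gamma$ are nonempty, then $\lambda\times\gamma=\Ind_{W(B_{|\lambda|})\times W(B_{|\gamma|})}^{W(B_n)}((\lambda\times\emptyset)\boxtimes(\emptyset\times\gamma))$ is parabolically induced from a proper Levi, so its class vanishes in $\overline R(W(B_n))$. If $\lambda=\emptyset$ and $|\gamma|=n$, then $\chi_{\emptyset\times\gamma}(\pi,s)=\chi_{\sigma_\gamma}(\pi)\prod_i s_i$; on an elliptic element of type $\alpha$ with $k$ parts the factor $\prod_i s_i$ equals the product of cycle-signs $(-1)^k$, which, using $\sgn_{S_n}(\pi_\alpha)=(-1)^{n-k}$, rewrites as $(-1)^n\sgn(\pi_\alpha)$. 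Combined with $\sigma_\gamma\otimes\sgn=\sigma_{\gamma^t}$, this yields $[\emptyset\times\gamma]=(-1)^n[\gamma^t\times\emptyset]$ in $\overline R(W(B_n))$.

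I expect the main subtlety to be precisely this last case: the ``purely negative'' irreducibles $\emptyset\times\gamma$ are not parabolically induced, and their identification with $\pm[\gamma^t\times\emptyset]$ has to be made directly by the sign-character computation on the elliptic locus. The rest is a routine orthogonality computation for $S_n$, once the sign-counting for $\det(1-w)$ on signed cycles is in place.
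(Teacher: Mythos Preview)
Your orthonormality computation is correct and is essentially the paper's proof, packaged slightly differently: the paper sums directly over elliptic $W(B_n)$-conjugacy classes $C^{B_n}_\alpha$, using $|C^{B_n}_\alpha|=2^{n-|\alpha|}|C^{S_n}_\alpha|$ and $\det(1-C^{B_n}_\alpha)=2^{|\alpha|}$, which is exactly your identity $2^{n-k}\cdot 2^k=2^n$ obtained by summing over elliptic lifts $s$.

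However, your surjectivity argument contains a genuine error. In the case where both $\lambda$ and $\gamma$ are nonempty, you assert that $\lambda\times\gamma=\Ind_{W(B_{|\lambda|})\times W(B_{|\gamma|})}^{W(B_n)}\bigl((\lambda\times\emptyset)\boxtimes(\emptyset\times\gamma)\bigr)$ is parabolically induced and hence vanishes in $\overline R(W(B_n))$. The induction formula is correct, but $W(B_a)\times W(B_b)$ with $a,b\ge 1$ is \emph{not} a parabolic subgroup of $W(B_n)$: parabolic subgroups of $W(B_n)$ have at most one type-$B$ factor, the remaining factors being symmetric groups. Consequently $[\lambda\times\gamma]$ need not vanish. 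Already in $W(B_2)$ one finds $\langle(1)\times(1),(1)\times(1)\rangle^\el_{W(B_2)}=2$, so $[(1)\times(1)]\neq 0$; in fact $[(1)\times(1)]=-[(2)\times\emptyset]-[(11)\times\emptyset]$. So the case you flagged as the subtle one ($\lambda=\emptyset$) is handled correctly, while the case you treated as routine is the one that fails.

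The case analysis is in any event unnecessary, and the paper does not attempt it. Once the $[\lambda\times\emptyset]$ are orthonormal of cardinality $p(n)=\rank\overline R(W(B_n))$, surjectivity over $\bZ$ is automatic: for any $x\in\overline R(W(B_n))$ write $x=\sum_\mu a_\mu[\mu\times\emptyset]$ with $a_\mu\in\bQ$, and then $a_\mu=\langle x,[\mu\times\emptyset]\rangle^\el_{W(B_n)}\in\bZ$ because the elliptic pairing $\langle\sigma,\sigma'\rangle^\el_W=\sum_i(-1)^i\langle\sigma,\sigma'\otimes\wedge^iE\rangle_W$ is integer-valued. This is what the paper's one-line conclusion after the orthonormality computation relies on. Your identity $[\emptyset\times\gamma]=(-1)^n[\gamma^t\times\emptyset]$ is correct but likewise not needed.
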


\begin{proof}
We compute:
\begin{equation}\label{e:Bn-ell}
\begin{aligned}
\langle\lambda\otimes\emptyset,\lambda'\otimes\emptyset\rangle^\el_W&=\frac
1{|W|}\sum_{\al\vdash n} (\lambda\times\emptyset)(C^{B_n}_\al) \det(1-C^{B_n}_\al)
|C^{B_n}_\al|\\
&=\frac 1{n! 2^n}\sum_{\al\vdash n}\sigma_\lambda(\al) 2^{|\al|}
2^{n-|\al|} |C^{S_n}_\al|\\
&=\langle\sigma_\lambda,\sigma_{\lambda'}\rangle_{S_n}.
\end{aligned}
\end{equation}
This proves the claim.
\end{proof}

\begin{proposition}[{cf. \cite[Proposition 3.3]{GNS}}]\label{p:Bn-fake}
The elliptic fake degree of $[\lambda\times \emptyset]\in \overline
R(W(B_n))$ is
\begin{equation}
F_{[\lambda\times\emptyset]}^{B_n}=\displaystyle{(q-1)^n
  q^{2n(\lambda)}\prod_{(i,j)\in\lambda}
  \frac{1-q^{2c(i,j)+1}}{1-q^{2h(i,j)}}.}
\end{equation}
\end{proposition}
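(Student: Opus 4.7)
The plan is to recognise $F^{B_n}_{[\lambda\times\emptyset]}$ as a specialisation of the generating function $g_\lambda(t,s)$ of (\ref{KP-comb}) and then read off the answer from the hook-content formula. From Definition~\ref{d:ell-fake-finite} only elliptic $w\in W(B_n)$ contribute, so the sum runs over the classes $C^{B_n}_\alpha$ indexed by $\alpha\vdash n$ as in the proof of Lemma~\ref{l:Bn-on}. For the representative $w_\alpha$, each constituent signed cycle of length $k$ has product of signs $-1$, hence its eigenvalues on the reflection representation are the $k$-th roots of $-1$; this yields $\det(1-qw_\alpha)=\prod_i(1+q^{\alpha_i})$ and $\det(1-w_\alpha)=2^{|\alpha|}$. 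Combining with $(\lambda\times\emptyset)(w_\alpha)=\sigma_\lambda(\alpha)$ (pull-back from $S_n$) and $|C^{B_n}_\alpha|=(n!/z_\alpha)\,2^{n-|\alpha|}$, the $2$-powers collapse exactly as in (\ref{e:Bn-ell}) and one obtains
\begin{equation*}
F^{B_n}_{[\lambda\times\emptyset]}=(q-1)^n\sum_{\alpha\vdash n}\frac{\sigma_\lambda(\alpha)}{z_\alpha}\prod_i\frac{1}{1+q^{\alpha_i}}.
\end{equation*}

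The pivotal identity is then $\frac{1}{1+q^k}=\frac{1-q^k}{1-q^{2k}}$, which turns each factor into $\prod_i(1-q^{\alpha_i})/\prod_i(1-q^{2\alpha_i})$. Using the cycle-type evaluations $\det_{E_n}(1-q^2w)=\prod_i(1-q^{2\alpha_i})$ and $\det_{E_n}(1-qw)=\prod_i(1-q^{\alpha_i})$ on the permutation representation $E_n$ of $S_n$ (the latter being $\det_{E_n}(1+sw)$ at $s=-q$), the right-hand side is precisely the character-theoretic expansion of
\begin{equation*}
g_\lambda(q^2,-q)=\langle\sigma_\lambda,\,S_{q^2}E_n\otimes\wedge_{-q}E_n\rangle_{S_n},
\end{equation*}
so $F^{B_n}_{[\lambda\times\emptyset]}=(q-1)^n\,g_\lambda(q^2,-q)$. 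Specialising (\ref{KP-comb}) at $t=q^2$, $s=-q$ yields
\begin{equation*}
g_\lambda(q^2,-q)=\frac{q^{2n(\lambda)}\prod_{(i,j)\in\lambda}(1-q^{2c(i,j)+1})}{\prod_{(i,j)\in\lambda}(1-q^{2h(i,j)})},
\end{equation*}
which is the claimed formula.

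The argument is essentially bookkeeping; the only step requiring foresight is choosing the specialisation $(t,s)=(q^2,-q)$, which converts the signed-cycle determinants on the reflection representation of $W(B_n)$ into ordinary cycle-type determinants on the permutation representation of $S_n$ and thereby makes the hook-length identity (\ref{KP-comb}) applicable. Once this link is identified, no substantive obstacle remains.
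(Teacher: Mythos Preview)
Your proof is correct and follows essentially the same route as the paper's: both reduce the elliptic fake degree to an $S_n$-pairing via the computation in Lemma~\ref{l:Bn-on}, use the identity $\frac{1}{1+q^k}=\frac{1-q^k}{1-q^{2k}}$ to recognise the resulting class function as the character of $S_{q^2}E_n\otimes\wedge_{-q}E_n$, and then apply (\ref{KP-comb}) at $(t,s)=(q^2,-q)$. Your version is slightly more explicit about the intermediate arithmetic (conjugacy class sizes, determinants), but the argument is the same.
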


\begin{proof} Calculating as in (\ref{e:Bn-ell}), we find
\begin{equation}
F_{[\lambda\times\emptyset]}^{B_n}=(q-1)^n\langle\sigma_\lambda,c^{B_n}_q\rangle_{S_n},\text{
  where } c^{B_n}_q(C^{S_n}_\al)=\frac 1{\det_{B_n}(1-q C^{B_n}_\al)}=\frac
1{\prod_{i=1}^{|\al|} (1+q^{\al_i})}.
\end{equation}
Notice that $c^{B_n}_q(C^{S_n}_\al)=\frac {\prod_i
  (q^{\al_i}-1)}{\prod_i (q^{2\al_i}-1)}.$ Since $\det_{E_n}(1-q
C^{S_n}_\al)=\prod_i(q^{\al_i}-1)$, it follows that
$c^{B_n}_q$ is just the $S_n$-character of $S_{q^2}E_n\otimes \wedge_{-q}E_n.$
Thus the formula follows by applying (\ref{KP-comb}) with $s=-q$ and $t=q^2.$
\end{proof}

\begin{remark} Let $u\in G$ be a unipotent element and denote by $Z_G(u)$ the centralizer of $u$ in $G$ with identity component $Z_G(u)^0.$ The A-group is the finite group $A(u)=Z_G(u)/Z_G(u)^0Z(G)$ of components of $Z_G(u)$ modulo the center $Z(G)$ of $G$. Let $\C B_u$ denote the variety of Borel subgroups containing $u$ and denote by $d_u$  its complex dimension. As shown by Springer \cite{Sp}, the cohomology groups (with complex coefficients) $H^\bullet(\C B_u)$ admit an action of $A(u)\times W$. Let $\widehat{A(u)}$ denote the set of (isomorphism classes of) irreducible representations of $A(u)$ and set 
\begin{equation}\label{e:Springer-type}
\widehat{A(u)}_0=\{\phi\in\widehat{A(u)}:~H^{2d_u}(\C B_u)^\phi\neq 0\}.
\end{equation}

In the formulation of Conjecture \ref{conj-main}, an essential role is
played by the elliptic fake degrees of the $W$-modules given by the
Springer representations on $H^\bullet(\C B_u)^\phi$, see
(\ref{nonzero-fake}). Here $u$ is a quasi-distinguished unipotent
element (in the sense of \cite[(3.2.2)]{R}) in the complex group $G^\vee=Sp(2n,\bC)$ or $Spin(2n+1,\bC)$,
and $\phi\in \widehat{A(u)}_0.$ By \cite[Theorem 1.3 and Proposition A.6]{CH}, there exists a
unique 
partition $\lambda$ of $n$ such that
\begin{equation}
(\sum_{i=0}^{d_u} (-1)^{d_u-i} H^{2i}(\C B_u)^\phi)\otimes
(S^++S^-)=(\lambda\times\emptyset)\otimes (S^++S^-).
\end{equation}
For the explicit combinatorial procedure (based on algorithms of
Slooten \cite{Sl}) for attaching $\lambda$ to
$(e,\phi)$, see \cite[section 3.7]{C}. Applying the identity to $ww_0$, where $w$ is
an elliptic element of $W$ and $w_0$ is the long Weyl group element
and using \cite[Theorem 1.2]{CH}, we find that
\begin{equation}
H^\bullet(\C B_u)^\phi(w)=\ep(u,\phi)\cdot (\lambda\times\emptyset)(w),
\end{equation}
where $\ep(u,\phi)\in\{\pm 1\}$ is the sign of the scalar by which
$w_0$ acts on the irreducible Springer representation $H^{2d_u}(\C
B_u)^\phi$. In general, for affine Hecke algebras with arbitrary real parameters, the role of such signs in the relation between elliptic
theory and Dirac induction is investigated as part of \cite{CO}.
\end{remark}

We now verify Proposition \ref{p:ind-ell}. We need to show that the
functions
\begin{equation}
f_\al(q)=\frac 1{\prod_{i=1}^{|\al|} (1+q^{\al_i})},
\quad\al=(\al_1,\al_2,\dots)\text{ (decreasing) partition of }n,
\end{equation}
are $\bZ$-linear independent. Suppose by induction that the claim
holds for all $m<n$. For $n$, divide the set of functions into subsets
indexed by $k=1,\dots,n$,
\begin{equation}
\C F_k=\{f_\al(q):\ \al\vdash n,\ \al_1=k\}.
\end{equation}
By induction, each subset $\C F_k$ is $\bZ$-linear independent (using
the induction hypothesis for $m=n-k$). Moreover, it is easily seen
that, for $a^k_\al\in\bZ$,
\begin{equation}
\sum_{k=1}^n \sum_{f_\al(q)\C F_k} a^k_\al f_\al(q)=0\text{ implies }
\sum_{f_\al(q)\C F_k} a^k_\al f_\al(q)=0,\text{ for every }k.
\end{equation}
For example, one first multiplies by $1+q^n$ and specializes $q$ to a
primitive $2n$-th root of $1$, then multiply by $1+q^{n-1}$, etc.

\subsection{}\label{sec:ell-D} Suppose $W$ is of type $D_n$. The group $W(D_n)$ is a
natural subgroup of $W(B_n).$ The elliptic element $w_\al$ in $W(B_n)$
lives in $W(D_n)$ if and only if $\al$ is a partition with an even
number of parts. The set $\{w_\al:\al\vdash n, |\al| \text{ is
  even}\}$ is a complete set of representatives for the elliptic
conjugacy classes in $W(D_n)$.

The irreducible $W(D_n)$ representations are obtained by restriction
from $W(B_n)$ as follows. If $\al\times\beta$ is an irreducible
$W(B_n)$-representation with $\al\neq \beta$, then the restrictions
$(\al\times\beta)|_{D_n}\cong (\beta\times\al)|_{D_n}$ are irreducible
$W(D_n)$-representations. If $\al=\beta,$ then $\al\times\al$
restricted to $W(D_n)$ decomposes into two inequivalent
equidimensional representations, $(\al\times\al)^I$ and $(\al\times\al)^{II}.$

Notice that the defining representation $E$ for $D_n$ is the same as
the one for $B_n$, therefore we may use Frobenius reciprocity to see
that 
\begin{equation}\label{e:Frob}
\langle \sigma,\Res^{B_n}_{D_n}\sigma'\rangle^\el_{W(D_n)}=\langle\Ind_{D_n}^{B_n}\sigma,\sigma'\rangle^\el_{W(B_n)}.
\end{equation}

\begin{lemma}\label{l:Dn-on} Let $\overline P(n)$ denote the set classes of
  partitions of $n$ under the relation $\lambda\cong\lambda^t.$
\begin{enumerate}
\item Suppose $n$ is odd. The set $\{[\lambda\times\emptyset]:\lambda\in\overline P(n),
  \lambda\neq \lambda^t\}$ is an orthonormal basis for $\overline
  R(W(D_n)).$
\item Suppose $n$ is even. The set
  $\{[\lambda\times\emptyset]:\lambda\in\overline P(n)\}$ is an
  orthogonal basis for $\overline R(W(D_n))$. Moreover,
$$\langle \lambda\times\emptyset,\lambda\times\emptyset\rangle^\el_{W(D_n)}=\begin{cases}1,&\lambda\neq\lambda^t,\\2,&\lambda=\lambda^t.\end{cases}$$
\end{enumerate}
\end{lemma}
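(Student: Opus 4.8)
The plan is to transfer the computation from $W(D_n)$ to $W(B_n)$ via the Frobenius reciprocity identity (\ref{e:Frob}), using that $\Ind_{D_n}^{B_n}(\Res^{B_n}_{D_n}\sigma)=\sigma\oplus(\sigma\otimes\chi)$, where $\chi$ is the order-two character of $W(B_n)$ with kernel $W(D_n)$. Concretely, $\chi=(\emptyset\times n)\otimes\sgn$ in the pair-of-partitions notation, so that $(\lambda\times\emptyset)\otimes\chi=\emptyset\times\lambda$; hence on restriction to $W(D_n)$ the classes $[\lambda\times\emptyset]$ and $[\emptyset\times\lambda]$ coincide. Combined with the $\sgn$-twist rule $(\lambda\times\emptyset)\otimes\sgn=\emptyset\times\lambda^t$, this already predicts the identification $[\lambda\times\emptyset]=[\lambda^t\times\emptyset]$ in $\overline R(W(D_n))$, which is why the indexing set is $\overline P(n)$.

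First I would record the inner product formula
\begin{equation}
\langle \lambda\times\emptyset,\,\mu\times\emptyset\rangle^\el_{W(D_n)}
=\langle \Ind_{D_n}^{B_n}\Res^{B_n}_{D_n}(\lambda\times\emptyset),\,\mu\times\emptyset\rangle^\el_{W(B_n)}
=\langle (\lambda\times\emptyset)\oplus(\emptyset\times\lambda),\,\mu\times\emptyset\rangle^\el_{W(B_n)},
\end{equation}
and then evaluate each of the two resulting $W(B_n)$-elliptic pairings. By Lemma \ref{l:Bn-on} the first term $\langle\lambda\times\emptyset,\mu\times\emptyset\rangle^\el_{W(B_n)}=\delta_{\lambda\mu}$. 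For the second term $\langle\emptyset\times\lambda,\mu\times\emptyset\rangle^\el_{W(B_n)}$, I would use the $\sgn$-twist: since $(\emptyset\times\lambda)=(\lambda^t\times\emptyset)\otimes\sgn$ and the remark preceding the lemma gives $\langle\sigma\otimes\sgn,\sigma'\rangle^\el_{W(B_n)}=(-1)^n\langle\sigma,\sigma'\rangle^\el_{W(B_n)}$ (here $l=n$), we get $\langle\emptyset\times\lambda,\mu\times\emptyset\rangle^\el_{W(B_n)}=(-1)^n\langle\lambda^t\times\emptyset,\mu\times\emptyset\rangle^\el_{W(B_n)}=(-1)^n\delta_{\lambda^t\mu}$. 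Therefore
\begin{equation}
\langle \lambda\times\emptyset,\,\mu\times\emptyset\rangle^\el_{W(D_n)}=\delta_{\lambda\mu}+(-1)^n\delta_{\lambda^t\mu}.
\end{equation}

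With this formula in hand the two cases fall out immediately. When $n$ is odd, $(-1)^n=-1$, so $\langle\lambda\times\emptyset,\lambda^t\times\emptyset\rangle^\el_{W(D_n)}=-1$ when $\lambda\neq\lambda^t$ while $\langle\lambda\times\emptyset,\lambda\times\emptyset\rangle^\el_{W(D_n)}=1$ when $\lambda\neq\lambda^t$, and $\langle\lambda\times\emptyset,\lambda\times\emptyset\rangle^\el_{W(D_n)}=1-1=0$ when $\lambda=\lambda^t$; so the self-transpose classes lie in the radical and the remaining $[\lambda\times\emptyset]$ with $\lambda\neq\lambda^t$ pair to $\pm\delta$ among transpose-pairs, giving after choosing one representative per pair an orthonormal set. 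When $n$ is even, $(-1)^n=1$, so $\langle\lambda\times\emptyset,\mu\times\emptyset\rangle^\el_{W(D_n)}=\delta_{\lambda\mu}+\delta_{\lambda^t\mu}$, which is $1$ for $\lambda\neq\lambda^t$ and $2$ for $\lambda=\lambda^t$, and vanishes for non-transpose-related $\lambda,\mu$ — an orthogonal basis with the stated norms. To finish I must check these sets actually span $\overline R(W(D_n))$ and have the right cardinality; for this I would compare with the known dimension of $\overline R(W(D_n))$, namely the number of elliptic conjugacy classes, which by the description recalled just before the lemma is the number of partitions of $n$ with an even number of parts — and cross-check that this equals $|\{\lambda\in\overline P(n):\lambda\neq\lambda^t\}|$ when $n$ is odd and $|\overline P(n)|$ when $n$ is even, using the involution $\alpha\mapsto$ conjugate partition on partitions of $n$ and a parity/fixed-point count. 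The main obstacle is precisely this last bookkeeping step: confirming linear independence/spanning, since the $[\lambda\times\emptyset]$ a priori only generate a subspace, and one must rule out extra relations and match dimensions exactly. The restriction-from-$B_n$ argument handles the Gram matrix cleanly, but the counting of elliptic classes versus self-conjugate partitions is where care is needed.
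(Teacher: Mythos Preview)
Your argument is exactly the paper's: Frobenius reciprocity (\ref{e:Frob}), the $\sgn$-twist $\emptyset\times\lambda=(\lambda^t\times\emptyset)\otimes\sgn$, and Lemma \ref{l:Bn-on} give $\langle\lambda\times\emptyset,\mu\times\emptyset\rangle^\el_{W(D_n)}=\delta_{\lambda\mu}+(-1)^n\delta_{\lambda^t\mu}$, after which the paper simply declares the claims follow easily (one small slip: $\chi=\emptyset\times n$, not $(\emptyset\times n)\otimes\sgn$, though you use the correct property $(\lambda\times\emptyset)\otimes\chi=\emptyset\times\lambda$). The spanning step you flag is glossed over in the paper as well; it amounts to the partition identity $\sum_{\lambda\vdash n}(-1)^{\#\text{parts}(\lambda)}=(-1)^n\,|\{\lambda:\lambda=\lambda^t\}|$ (immediate from $\prod_{k\ \mathrm{odd}}(1-q^k)=\prod_{k\ \mathrm{odd}}(1+(-q)^k)$), or more directly to the surjectivity of restriction $\overline R(W(B_n))\to\overline R(W(D_n))$, which holds because the elliptic $W(D_n)$-classes do not split in $W(B_n)$ and hence the two summands $(\alpha\times\alpha)^{I},(\alpha\times\alpha)^{II}$ have equal elliptic characters.
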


\begin{proof} Suppose $\lambda,\lambda'\in \overline P(n).$ 
By (\ref{e:Frob}), we have $\langle
\lambda\times\emptyset,\lambda'\times\emptyset\rangle^\el_{W(D_n)}=\langle
\lambda\times\emptyset+\emptyset\times\lambda,\lambda'\times\emptyset\rangle^\el_{W(B_n)}=\langle
\lambda\times\emptyset,\lambda'\times\emptyset\rangle^\el_{W(B_n)}+(-1)^n\langle
\lambda^t\times\emptyset,\lambda'\times\emptyset\rangle^\el_{W(B_n)}.$
The claims now follow easily from Lemma \ref{l:Bn-on}.
\end{proof}

\begin{proposition}\label{p:Dn-fake}
The elliptic fake degree of $[\lambda\times \emptyset]\in \overline
R(W(D_n))$ is
\begin{equation}
\begin{aligned}
F_{[\lambda\times\emptyset]}^{D_n}=\displaystyle{\frac{(q-1)^n}{\displaystyle{\prod_{(i,j)\in\lambda}}(1-q^{2h(i,j)})}\left(
  q^{2n(\lambda)}{\prod_{(i,j)\in\lambda}}
  (1-q^{2c(i,j)+1})+  (-1)^nq^{2n(\lambda^t)}\prod_{(i',j')\in\lambda^t}
  (1-q^{2c(i',j')+1})\right).}
\end{aligned}
\end{equation}
\end{proposition}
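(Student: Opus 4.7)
The plan is to reduce the $D_n$ computation to the $B_n$ formula of Proposition \ref{p:Bn-fake} via two simple manipulations: Frobenius reciprocity for the pair $(W(D_n),W(B_n))$, and the sgn-twist symmetry of the elliptic pairing. The argument is entirely formal, so the main ``obstacle'' is really just careful bookkeeping through the branching rules.

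First I would note that the reflection representation of $W(D_n)$ is the restriction of that of $W(B_n)$, so $S_qE$ and the factor $\det(1-w)$ both restrict unchanged. Applying (\ref{e:Frob}) with $\sigma=(\lambda\times\emptyset)|_{D_n}$ and $\sigma'=S_qE$ yields
$$F^{D_n}_{[\lambda\times\emptyset]}=(q-1)^n\bigl\langle\Ind_{D_n}^{B_n}((\lambda\times\emptyset)|_{D_n}),\,S_qE\bigr\rangle^\el_{W(B_n)}.$$
By Frobenius reciprocity together with the restriction rule $(\lambda\times\emptyset)|_{D_n}\cong(\emptyset\times\lambda)|_{D_n}$ recalled just before (\ref{e:Frob}), the induced module equals $\lambda\times\emptyset+\emptyset\times\lambda$ (for $\lambda\ne\emptyset$), so the bracket splits into two $B_n$ elliptic pairings.

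Next I would use the identity $(\lambda\times\gamma)\otimes\sgn=\gamma^t\times\lambda^t$ to rewrite $\emptyset\times\lambda=(\lambda^t\times\emptyset)\otimes\sgn$, then invoke the sgn-twist identity of the remark after Definition \ref{d:ell-fake-finite}, namely $\langle\sigma\otimes\sgn,\sigma'\rangle^\el_W=(-1)^l\langle\sigma,\sigma'\rangle^\el_W$ with $l=n$, to convert the second bracket. This gives the key intermediate reduction
$$F^{D_n}_{[\lambda\times\emptyset]}=F^{B_n}_{[\lambda\times\emptyset]}+(-1)^n F^{B_n}_{[\lambda^t\times\emptyset]}.$$

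Finally I would substitute the closed form from Proposition \ref{p:Bn-fake} into each summand. Since transposition is an involution on partitions preserving the multiset of hook lengths, $\prod_{(i,j)\in\lambda}(1-q^{2h(i,j)})=\prod_{(i',j')\in\lambda^t}(1-q^{2h(i',j')})$, so the two summands share a common denominator and combine to give precisely the stated formula. The special case $\lambda=\lambda^t$ with $n$ even, where $[\lambda\times\emptyset]$ has norm $2$ in $\overline R(W(D_n))$ by Lemma \ref{l:Dn-on}, is handled by the same computation; consistency is visible from the fact that both sides of the formula are manifestly invariant under $\lambda\leftrightarrow\lambda^t$ when $n$ is even.
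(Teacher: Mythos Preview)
Your proposal is correct and follows essentially the same route as the paper: the paper's proof simply says ``as in the proof of Lemma~\ref{l:Dn-on}'' to obtain the reduction $F^{D_n}_{[\lambda\times\emptyset]}=F^{B_n}_{[\lambda\times\emptyset]}+(-1)^n F^{B_n}_{[\lambda^t\times\emptyset]}$ and then invokes Proposition~\ref{p:Bn-fake}, which is exactly your Frobenius-reciprocity-plus-sgn-twist argument spelled out. Your additional remark that transposition preserves the multiset of hook lengths, so the two $B_n$ terms share a denominator, is a helpful detail the paper leaves implicit.
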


\begin{proof} As in the proof of Lemma \ref{l:Dn-on}, we see that 
$$F_{[\lambda\times\emptyset]}^{D_n}=F_{[\lambda\times\emptyset]}^{B_n}+(-1)^n
F_{[\lambda^t\times\emptyset]}^{B_n},$$
and the formula follows from Proposition \ref{p:Bn-fake}.

\end{proof}

Since the functions $1/\det(1-qw)$, $w$ elliptic, in type $D_n$ are a
subset of the ones for type $B_n$, they are also $\bZ$-linear independent.

\subsection{}\label{sec:affine-ell} Let $\C R=(X,R,X^\vee, R^\vee,F)$ be a based root datum. In
particular, $X,X^\vee$ are lattices in perfect duality
$\langle~,~\rangle:X\times X^\vee\to\bZ$, $R\subset X\setminus\{0\}$
and $R^\vee\subset X^\vee\setminus\{0\}$ are the (finite) sets of
roots and coroots respectively, and $F\subset R$ is a basis of
simple roots. Let $W$ be the finite Weyl group with set of generators
$S=\{s_\al:\al\in F\}.$ Set $W^e=W\ltimes
X$, the extended affine Weyl group, and $W^a=W\ltimes Q$, the affine
Weyl group, where $Q$ is the root lattice of $R$. Then $W^a$ is
normal in $W^e$ and $\Omega:=W^e/W^a\cong X/Q$ is an abelian group. We assume that $\C R$ is semisimple, i.e., $\Omega$ is a finite group.

The set $R^a=R^\vee\times \bZ\subset X^\vee\times\bZ$ is the set of
affine roots. A basis of simple affine roots is given by
$F^a=(F^\vee\times\{0\})\cup\{(\gamma^\vee,1): \gamma^\vee\in R^\vee
\text{ minimal}\}.$ For every affine root $\mathbf a=(\al^\vee,n)$, let
$s_{\mathbf a}:X\to X$ denote the reflection
$s_{\mathbf a}(x)=x-((x,\al^\vee)+n)\al.$ The affine Weyl group $W^a$
has a set of generators $S^a=\{s_{\mathbf a}: \mathbf a\in F^a\}$.
Let $l:W^e\to\bZ$ be the length function. 

Set $E=X\otimes_\bR \bR$, so the discussion regarding elliptic theory of $W$ and $E$ from the previous sections applies. We denote a typical element of $W^e$ by $w t_x$, where $w\in W$ and $x\in X.$ The extended affine Weyl group $W^e$ acts on $E$ via $(w t_x)\cdot v=w\cdot v+ x,$ $v\in E.$

An element $w t_x\in W^e$ is called elliptic if $w\in W$ is elliptic (with respect to the action on $E$), or equivalently, if $wt_x$ has an isolated fixed point in $E$. For basic facts about elliptic theory for $W^e$, see \cite[sections 3.1, 3.2]{OS1}. There are finitely many elliptic conjugacy classes in $W^e$ (and in $W^a$). 

Let $W^e$-mod be the category of finite dimensional $W^e$-modules. Define the Euler-Poincar\'e pairing on $W^e$-mod as follows:
\begin{equation}
\langle U,V\rangle^\EP_{W^e}=\sum_{i\ge 0}(-1)^i\dim \Ext^i_{W^e}(U,V),\quad U,V\in W^e\text{-mod}.
\end{equation}
Let $R(W^e)$ be  the Grothendieck group of $W^e$-mod, and set $$\overline R(W^e)=R(W^e)/\text{rad}\langle~,~\rangle^\EP_{W^e}.$$ 
By \cite[Theorem 3.3]{OS1}, the Euler-Poincar\'e pairing for $W^e$ can also be expressed as an elliptic integral. More precisely, define the conjugation-invariant elliptic measure $\mu_\el$ on $W^e$ by setting $\mu_\el=0$ on nonelliptic conjugacy classes, and for an elliptic conjugacy class $C$ such that $v\in E$ is an isolated fixed point for some element of $C$, set
\begin{equation}
\mu_\el(C)=\frac{|Z_{W^e}(v)\cap C|}{|Z_{W^e}(v)|};
\end{equation}
here $Z_{W^e}(v)$ is the isotropy group of $v$ in $W^e.$ Then
\begin{equation}\label{affine-ell-pair}
\langle U, V\rangle^\EP_{W^e}=\langle \chi_U,\chi_V\rangle^\el_{W^e}:=\int_{W^e}\chi_U\chi_V~d\mu_\el,\ U,V\in W^e\text{-mod},
\end{equation}
where $\chi_U,\chi_V$ are the characters of $U$ and $V$.

Set $T^\vee=\Hom_\bZ(X,\bC^\times)$. (The superscript $\vee$ is so that the notation is consistent in a later section.) Then $W$ acts on $T^\vee$. For every $s\in T^\vee,$ set
\begin{equation}
W_s=\{w\in W: w\cdot s=s\},
\end{equation}
and one considers the elliptic theory of the finite group $W_s$ acting on the cotangent space of $T^\vee$ at $s$. By Clifford theory, consider the induction map
\begin{equation}
\Ind_s: W_s\text{-mod}\to W^e\text{-mod},\quad \Ind_s(U):=\Ind_{W_s\ltimes X}^{W^e}(U\otimes s),
\end{equation}
which maps irreducible modules to irreducible modules.

By \cite[Theorem 3.2]{OS1}, the map 
\begin{equation}\label{ind-iso}
\bigoplus_{s\in T^\vee/W}\Ind_s: \bigoplus_{s\in T^\vee/W}\overline R(W_s)_\bC\to \overline R(W^e)_\bC
\end{equation}
is an isomorphism of metric spaces, in particular,
\begin{equation}\label{ind-pair}
\langle \Ind_s U,\Ind_s V\rangle^\EP_{W^e}=\langle U,V\rangle^\el_{W_s},\ U,V\in W_s\text{-mod}.
\end{equation}
It is clear that the only nonzero contributions in the left hand side of (\ref{ind-iso}) comes from ``isolated'' elements of $T^\vee$, more precisely
\begin{equation}
T^\vee_\iso=\{s\in T^\vee: w\cdot s=s\text{ for some elliptic }w\in W\}.
\end{equation}
Conversely, suppose $\chi\in \overline R(W^e)_\bC$ is given. For every $s\in T^\vee_\iso/W$, one may project $\chi$ onto $\Ind_s\overline R(W_s)_\bC$. Call the projection $\pr_s\chi\in \Ind_s\overline R(W_s)_\bC.$ (Of course, this projection makes sense even for non-elliptic modules.) By the injectivity of (\ref{ind-iso}), there exists a unique element, which we denote $\overline\pr_s \chi\in \overline R(W_s)_\bC$ such that
\begin{equation}
\pr_s\chi=\Ind_s\overline\pr_s\chi.
\end{equation} 
With this notation, the inverse map is
\begin{equation}
\bigoplus_{s\in T^\vee_\iso/W}\overline \pr_s: \overline R(W^e)_\bC\to \bigoplus_{s\in T^\vee_\iso/W}\overline R(W_s)_\bC.
\end{equation}
This allows us to define elliptic fake degrees for elliptic $W^e$-modules.

\begin{definition}\label{d:ell-fake-affine}
For every class $[\pi]\in \overline R(W^e)$, define the elliptic fake degree of $[\pi]$ to be
$$F_{[\pi]}^{e}= F_{[\overline\pr_1\pi]},$$
where $F_{[\overline\pr_1\pi]}$ is the elliptic fake degree in the finite reflection group $W=W_s$, as in Definition \ref{d:ell-fake-finite}.

The discussion above also applies if we consider just $W^a$ instead of $W^e$, and in that case we denote the elliptic fake degree by $F_{[\pi]}^a.$ 
\end{definition}

\section{Unipotent representations of finite groups of Lie
  type}\label{s:2}

\subsection{} Let $p,\ell$ be primes, $\ell\neq p$, and $q$ be a power of 
$p$. Let $k$ be an algebraic closure of $\bF_p$. Let $G$ be a 
connected algebraic reductive $k$-group defined over $\bF_q$, and let $F:G\to G$
be the associated Frobenius homomorphism. 
Let $L:G\to G$,
$L(g)=g^{-1}F(g)$, be Lang's map. Let $\C C_\class(G^F)$ be the
$\overline\bQ_\ell$-vector space of class functions $G^F\to
\overline\bQ_\ell$ with the character pairing $\langle~,~\rangle.$ Let $\Irr G^F$ denote the set of irreducible
$\overline\bQ_\ell$-characters of $G^F.$


The Deligne-Lusztig generalized character $R_{T,\theta}\in \C
C_\class(G^F)$ is defined for an $F$-stable maximal torus $T$, and a
character $\theta: T^F\to \overline\bQ_\ell^*$ as follows. Let $B=TU$
be a Borel subgroup of $G$, and set $\wti X=L^{-1}(U).$ Then $\wti X$
carries a left $G^F$-action and a right $T^F$ action. Set
\begin{equation}
R_{T,\theta}(g)=\sum_{i\ge 0} (-1)^i \tr(g, H_c^i(\wti
X,\overline\bQ_\ell)_\theta),\quad g\in G^F;
\end{equation} 
here $H^i_c(\wti X,\overline\bQ_\ell)$ denotes the $i$-th $\ell$-adic
cohomology group with compact support, and the subscript $\theta$
indicates the $\theta$-isotypic component. By \cite[Theorem 4.2]{DL},
$R_{T,\theta}$ is independent of the choice of $B$.  

If $\rho\in \Irr
G^F$, there exists $(T,\theta)$ such that $\langle
\rho,R_{T,\theta}\rangle\neq 0.$ 

\begin{definition}
A character $\rho\in\Irr G^F$ is called unipotent
if $\langle\rho, R_{T,1}\rangle\neq 0$ for a maximal $F$-stable torus
$T$. One calls $\rho$ cuspidal if $\langle\rho,R_{T,\theta}\rangle=0$
for any $F$-stable maximal torus $T$ contained in some proper
$F$-stable parabolic subgroup of $G$ and any character $\theta$ of $T^F.$ Denote by $\Irr_{\uni} G^F$ the set of irreducible unipotent characters of $G^F.$
\end{definition}

\subsection{} Fix $B_0$ an $F$-stable Borel subgroup of $G$,
$B_0=T_0U_0$, and $T_0$ is a $F$-stable maximal torus. Let $W$ be the
Weyl group of $T_0$. For every $w\in W$, choose an $F$-stable
representative $\dot w$ in $G$ and $x\in G$ such that $x^{-1}F(x)=\dot
w.$ Denote $T_w=xT_0x^{-1}$, an $F$-stable maximal torus, and set 
\begin{equation}
R_w=R_{T_w,1},\ w\in W.
\end{equation}
The character $R_w$ admits the following alternative description:
\begin{equation}
R_w(g)=\sum_{i\ge 0} (-1)^i \tr(g, H_c^i(X_w,\overline\bQ_\ell)),\
g\in G^F,
\end{equation}
where $X_w=\{g B_0\in G/B_0: g^{-1} F(g)\in B_0\dot w B_0\}.$ In
particular, 
\begin{equation}\label{pair-Rw}
\langle R_w,R_{w'}\rangle=\#\{w_1\in W: w_1 w'=w F(w_1)\}.
\end{equation}
and 
\begin{equation}
R_w(1)=\sum_{i\ge 0} \tr (F\circ w, \overline S^i) q^i,
\end{equation}
where $S$ is algebra of $\bQ$-polynomial function of $\bQ\otimes
\Hom(k^*,T_0)$, and $\overline S$ is the graded algebra of
$W$-coinvariants.

\subsection{} From now on, we assume for simplicity that $G^F$ is split. For every $\C E\in \widehat W$, define the unipotent
almost-character
\begin{equation}
R_{\C E}=\frac 1{|W|}\sum_{w\in W}\tr (w,{\C E}) R_w.
\end{equation}
The general definition of the unipotent almost-character is in \cite[(3.7.1)]{L1}. In the split case,\begin{equation}
R_{\C E}(1)=\sum_{i\ge 0}\dim({\C E}\otimes \overline S^i)^W q^i,
\end{equation}
which means that $R_{\C E}(1)$ equals the fake degree
of ${\C E}$:
\begin{equation}
f_{\C E}(q)=\sum_{i\ge 0} \dim ({\C E}\otimes \overline S^i)^W q^i=(1-q)^r P(q)\frac
1{|W|}\sum_{w\in W} \frac{\tr(w, {\C E})}{\det_{\overline S^1}(1-q w)},
\end{equation}
where $r$ is the rank of $G^F$ and $P(q)$ is the Poincar\'e polynomial
of $W$.

The set $\{R_{\C E}: {\C E}\in \widehat W\}$ is orthonormal. This follows
from (\ref{pair-Rw}). Also from (\ref{pair-Rw}), it is easy to deduce
that if $g\in G^F$ is regular semisimple and $w_g\in W$ is the unique
Weyl group element such that $g\in T_{w_g}$, then
\begin{equation}
R_{\C E}(g)=\tr(w_g,{\C E}).
\end{equation}

\subsection{}For every ${\C E}\in \widehat W$, there exists a corresponding irreducible
unipotent $G^F$-representation $\rho_{\C E}$, as follows. The algebra $\End_{G^F}(\Ind_{B_0^F}^{G^F}(\triv))$ is
isomorphic to the finite Hecke algebra $\CH(W,q)$ of $W$. Recall that
$\CH(W,q)$ is the $\overline Q_\ell$-algebra (or $\bC$-algebra)
spanned by $\{t_w: w\in W\}$ subject by the relations:
\begin{equation}
\begin{aligned}
& t_s^2=(q-1) t_s+q t_1,\quad s \text{ simple reflection}.\\
&t_w\cdot t_{w'}=t_{ww'},\quad \text{ if }\ell(ww')=\ell(w)+\ell(w');\\
\end{aligned}
\end{equation}
here $\ell(w)$ denotes the length function of $W$. 
By Tits' deformation argument, $\CH(W,q)$ is isomorphic to the group
algebra of $W$, and to every ${\C E}$, there corresponds a simple
module of $\CH(W,q)$, and thus an irreducible unipotent
$G^F$-representation $\rho_{\C E}$ which occurs in
$\Ind_{B_0^F}^{G^F}(\triv).$

\subsection{}\label{sec:fourier} 
If $\Gamma$ is a finite group, consider the set of pairs
$\wti M(\Gamma)=\{(x,\sigma): x\in\Gamma,\ \sigma\in\Irr
C_\Gamma(x)\}.$ The group $\Gamma$ acts on $\wti M(\Gamma)$ by
$\gamma\cdot(x,\sigma)=(\gamma x \gamma^{-1},\sigma^\gamma),$ where
$\sigma^\gamma$ is the twist of $\sigma$ by $\gamma.$ Let $M(\Gamma)$
denote the set of orbits.
\begin{definition}[{\cite[(4.14.3)]{L1}}]\label{d:exotic-Fourier}
The exotic Fourier transform matrix associated to $\Gamma$ is the
square matrix of size $\# M(\Gamma)$ with entries
\begin{equation}
\{(x,\sigma),(y,\tau)\}=\frac 1{|C_\Gamma(x)| |C_\Gamma(y)|}\sum_{\substack{g\in
\Gamma\\x gyg^{-1}=gyg^{-1}x}}\sigma(gyg^{-1})\overline{\tau(g^{-1}x g)}.
\end{equation}
\end{definition}

 In \cite[chapter 4]{L1}, Lusztig partitioned the set of irreducible $W$-characters into families $\C F$, and attached to each family a finite group $\Gamma_{\C F}$ together with an injective map
 $\C F\hookrightarrow M(\Gamma_{\C F})$. Denote the image of this map by $M(\Gamma_{\C F})'$. Define the parameterizing set \cite[(4.21.1)]{L1}
 \begin{equation}
  X(W)=\displaystyle{\bigsqcup_{\C F\subset \Irr W} M(\Gamma_{\C F})},
 \end{equation}
and the pairing \cite[(4.21.2)]{L1}
\begin{equation}\label{pair-X}
\{~,~\}: X(W)\times X(W)\to \bQ,
\end{equation}
as follows: for $(x,\sigma)\in M(\Gamma_{\C F})$ and $(y,\tau)\in M(\Gamma_{\C F'})$,  $\{(x,\sigma),(y,\tau)\}$ is as in Definition \ref{d:exotic-Fourier} if $\C F=\C F'$, and otherwise it is zero.

Let 
\begin{equation}\label{Delta}
\Delta: M(\Gamma_{\C F})\to \{\pm 1\} 
\end{equation}
be the function defined in \cite[\S4.14 and \S4.21]{L1}. Recall that if $W$ is an irreducible Weyl group, then $\Delta$ is the constant function $1$, except for the families $\C F$ that contain the representation $512_a'$ in $E_7$, or $4096_z$ or $4096_x'$ in $E_8.$

\medskip

As in \cite[\S4.22]{L1}, we assume that $G$ has connected center. We recall next the main theorem of the classification of $\Irr_\uni G^F$ from \cite{L1} together with a character formula that we will need later in the paper.

\begin{theorem}[{\cite[Theorem 4.23, (4.26.1)]{L1}}] 
There exists a bijection $$X(W)\longleftrightarrow \Irr_\uni G^F,\quad (x,\sigma)\to \rho_{(x,\sigma)}$$ such that
for every $(x,\sigma)\in X(W)$ and ${\C E}\in
  \widehat W,$ we have
\begin{equation}
\langle\rho_{(x,\sigma)},R_{\C E}\rangle=\Delta(x,\sigma) \{(x,\rho),(y,\tau)\},
\end{equation}
where $(y,\tau)$ is such that $\rho_{(y,\tau)}=\rho_{\C E}.$
Moreover, if $g\in G^F$ is semisimple, then
\begin{equation}\label{char-semi}
\tr(g,\rho_{(x,\sigma)})=\sum_{(y,\tau)\in M(\Gamma)'} \Delta(x,\sigma)
\{(x,\sigma),(y,\tau)\} ~R_{(y,\tau)}(g);
\end{equation}
here we write $R_{(y,\tau)}$ in place of $R_{\C E}$ when $\rho_{(y,\tau)}=\rho_{\C E}.$
\end{theorem}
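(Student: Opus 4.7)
The proof plan follows Lusztig's strategy in \cite[Chapter 4]{L1}. First, I would partition $\widehat W$ into families $\C F$ using the two-sided Kazhdan-Lusztig cells (equivalently, via Lusztig's $a$-function) and, to each family, attach a finite group $\Gamma_{\C F}$ together with an embedding $\C F \hookrightarrow M(\Gamma_{\C F})$ whose image is $M(\Gamma_{\C F})'$. This gives the parameterizing set $X(W) = \bigsqcup_{\C F} M(\Gamma_{\C F})$ and the pairing (\ref{pair-X}). For $(y,\tau) \in M(\Gamma_{\C F})'$ I would set $\rho_{(y,\tau)} := \rho_{\C E}$, where $\C E \in \C F$ is the preimage, using the unipotent character already built from $\C E$ via the Tits deformation isomorphism $\End_{G^F}(\Ind_{B_0^F}^{G^F}(\triv)) \cong \CH(W,q)$.

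For the remaining points $(x,\sigma) \in M(\Gamma_{\C F}) \setminus M(\Gamma_{\C F})'$, I would define the corresponding class function by the non-abelian Fourier inversion
\begin{equation*}
\rho_{(x,\sigma)} := \Delta(x,\sigma) \sum_{(y,\tau) \in M(\Gamma_{\C F})'} \{(x,\sigma),(y,\tau)\}\, R_{(y,\tau)}.
\end{equation*}
Once this is in place, the inner product identity $\langle \rho_{(x,\sigma)}, R_{\C E}\rangle = \Delta(x,\sigma)\{(x,\sigma),(y,\tau)\}$ follows from the orthonormality of $\{R_{\C E}\}_{\C E \in \widehat W}$ together with the orthogonality properties of the matrix $\{~,~\}_{\Gamma_{\C F}}$ and the vanishing of (\ref{pair-X}) across distinct families. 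The central, and hardest, step is to show that each $\rho_{(x,\sigma)}$ so defined is in fact an irreducible character of $G^F$, not merely an orthonormal virtual character, and that together with the ones indexed by $M(\Gamma_{\C F})'$ they exhaust $\Irr_\uni G^F$. In classical types this is done via Lusztig symbols, Harish-Chandra induction from cuspidal unipotent data, and a careful matching with the Jordan decomposition of characters. In exceptional types one proceeds by explicit case-by-case verification using character tables; the sign function $\Delta(x,\sigma)$ is introduced precisely to absorb the $\pm 1$ ambiguities arising in the families containing $512_a'$ in $E_7$ and $4096_z, 4096_x'$ in $E_8$.

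Granting the bijection and the inner product identity, the character formula on semisimple $g$ is immediate. Since the almost-characters $\{R_{\C E}\}$ form an orthonormal basis for the span of $\Irr_\uni G^F$ in $\C C_\class(G^F)$, one has
\begin{equation*}
\rho_{(x,\sigma)} = \sum_{\C E \in \widehat W} \langle \rho_{(x,\sigma)}, R_{\C E}\rangle R_{\C E}
\end{equation*}
as class functions on all of $G^F$. Substituting the Fourier formula for the inner products, using that $\{(x,\sigma),(y,\tau)\}$ vanishes across distinct families, and re-indexing via $\rho_{(y,\tau)} = \rho_{\C E}$, yields the stated identity upon evaluation at $g$. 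The emphasis on semisimple $g$ is natural because the right-hand side $R_{(y,\tau)}(g)$ then admits an explicit description through Deligne-Lusztig theory applied to the centralizer $C_G(g)^F$, making the formula effectively computable in the applications that follow.
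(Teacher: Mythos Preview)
The paper does not prove this theorem; it is quoted verbatim from \cite[Theorem 4.23, (4.26.1)]{L1} as background for the formulation of Conjecture \ref{conj-main}. There is therefore no ``paper's own proof'' to compare against.

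That said, your sketch contains a genuine error worth flagging. You assert that the almost-characters $\{R_{\C E}:\C E\in\widehat W\}$ form an orthonormal basis for the span of $\Irr_\uni G^F$, and you use this to write $\rho_{(x,\sigma)}=\sum_{\C E}\langle\rho_{(x,\sigma)},R_{\C E}\rangle R_{\C E}$ as an identity of class functions on all of $G^F$. This is false in general: $|X(W)|=|\Irr_\uni G^F|$ is strictly larger than $|\widehat W|$ whenever some family has $|M(\Gamma_{\C F})|>|M(\Gamma_{\C F})'|$, so the $R_{\C E}$ span only the \emph{uniform} subspace, a proper subspace of the span of the unipotent characters. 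Consequently your proposed definition of $\rho_{(x,\sigma)}$ for $(x,\sigma)\notin M(\Gamma_{\C F})'$ as a linear combination of the $R_{\C E}$ cannot produce an irreducible character; at best it produces the uniform projection of one. In Lusztig's actual argument the unipotent characters outside the principal series are constructed independently---via Harish-Chandra induction from cuspidal unipotent representations of Levi subgroups, themselves exhibited case by case---and the Fourier relation is then \emph{verified}, not used as a definition.

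This also explains why the restriction to semisimple $g$ in (\ref{char-semi}) is essential rather than merely ``natural for applications'': the displayed expansion is the uniform projection of $\rho_{(x,\sigma)}$, and one needs a separate argument (the character formula for $R_{T,\theta}$ on semisimple elements and the fact that uniform projection is the identity on such elements) to conclude that it computes $\tr(g,\rho_{(x,\sigma)})$ there. For non-semisimple $g$ the formula is simply false.
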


Suppose $(x,\sigma)\in M(\Gamma_{\C F})\subset X(W)$ is given. The two extreme cases of (\ref{char-semi}) are:
\begin{enumerate}
\item if $g\in G^F$ is regular semisimple, then
\begin{equation}
\tr(g,\rho_{(x,\sigma)})=\sum_{(y,\tau)\in M(\Gamma_{\C F})'} \Delta(x,\sigma)
\{(x,\sigma),(y,\tau)\} \tr(w_g,{\C E});
\end{equation}
\item if $g=1$, then the formal degree of an irreducible unipotent
  representation is
\begin{equation}\label{formal-finite}
\mu_{\rho_{(x,\sigma)}}:=\rho_{(x,\sigma)}(1)=\sum_{(y,\tau)\in M(\Gamma_{\C F})'} \Delta(x,\sigma)
\{(x,\sigma),(y,\tau)\} f_{\C E}(q);
\end{equation}
\end{enumerate}
here ${\C E}$ is such that $\rho_{\C E}=\rho_{(y,\tau)}.$

\section{Unipotent representations of reductive $p$-adic groups}\label{sec:unip-padic}

\subsection{} Let $K$ be a nonarchimedian local field, with ring of
integers $\C O$, prime ideal $\fk p$, and finite residue field
$\bF_q=\CO/\fk p.$  Let $G$ be a connected reductive algebraic group defined
over $K$, and $G(K)$ its $K$-points. If $P$ is a parahoric subgroup of
$G(K),$ let $U_P$ be its pro-unipotent radical, and $\overline
P=P/U_P$ be the reductive quotient, a reductive group over $\bF_q.$

\begin{definition}[\cite{L2}]
An irreducible smooth representation $(\pi,V)$ of $G(K)$ is called
unipotent if there exists a parahoric subgroup $P$ of $G(K)$ and a
cuspidal unipotent representation $\rho$ of $\overline P$ such that
$\Hom_{\overline P}[\rho,V^{U_P}]\neq 0.$
\end{definition}

Fix a Haar measure $\mu$ on $G(K).$ Suppose $(\rho,\C W)$ is a
cuspidal unipotent representation of $\overline P.$ Let $\wti\rho$ be
the pullback to $P$ of $\rho.$ Define $\C H(G(K),\rho)$ to be the
$\mu$-convolution algebra of compactly supported, smooth functions 
\begin{equation}
\C H(G(K),\rho)=\{f: G(K)\to \End(\C W): f(p_1xp_2)=\wti\rho(p_1)
f(x)\wti\rho(p_2),\ p_1,p_2\in P, x\in G(K)\},
\end{equation}
with unit $1_\rho=\frac 1{\mu(P)} \wti\rho \chi_P,$ where $\chi_P$ is
the characteristic function of $P$.

Denote by $\C R(G(K),\rho)$ the category of complex smooth
representations $V$ of $G(K)$ such that 
the $\wti\rho$-isotypical component $V^\rho\subset V|_P$ of the 
restriction of $V$  to $P$ generates $V$, i.e. $H(G(K))V^\rho=V$.
 The Hecke algebra $\C H(G(K),\rho)$ acts naturally on 
 $V_\rho:=\Hom_{P}[\wti\rho,V]=\Hom_{\overline{P}}[\rho,V^{U_P}]$, 
giving rise to a functor
\begin{equation}\label{functor-rho}
m_\rho: \C R(G(K),\rho)\to \C H(G(K),\rho)\text{-mod},\ V\mapsto V_\rho.
\end{equation}
In this case, it is known from \cite{Mo}  and \cite{MP} that $m_\rho$ is an equivalence of categories. 
This is a generalization of Borel's classical result \cite{B} in the case of Iwahori subgroups.

\subsection{}
The algebra $\C H(G(K),\rho)$ has a structure of a normalized
$\bC$-algebra with respect to the $*$-operation:
\begin{equation}
f^*(x)=f(x^{-1})^*,
\end{equation}
where the second $*$ means the conjugate transpose operation on $\C
W$, and inner product
\begin{equation}
[f_1,f_2]=\frac{\mu(P)}{\rho(1)} \tr ((f_1^*\star f_2)(1)).
\end{equation}
Since all simple $\C H(G(K),\rho)$-modules are finite dimensional, $\C
H(G(K),\rho)$ admits an abstract Plancherel formula. Let $C^*_r(\C H(G(K),\rho))$ denote the reduced $C^*$-algebra completion of $\C H(G(K),\rho)$, see for example \cite[\S3.1 and \S4.1]{BHK}.

\begin{theorem}[\cite{Dix}]
There exists a unique positive Borel measure $\hat\mu_\rho$ (depending
on $\mu$) such that
\begin{equation}
[f,1_\rho]=\int_{\widehat{C^*_r(\C H(G(K),\rho))}}
\tr\pi(f)~d\hat\mu_\rho(\pi),\quad f\in \CH(G(K),\rho).
\end{equation}
\end{theorem}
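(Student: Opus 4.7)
The plan is to apply Dixmier's abstract Plancherel theorem for lower semi-continuous traces on postliminal $C^*$-algebras (cf.~\cite{Dix}) to the linear functional $\tau(f) := [f,1_\rho]$ on $\CH(G(K),\rho)$, once $\tau$ is verified to be a faithful positive tracial functional and $C^*_r(\CH(G(K),\rho))$ is known to be of Type~I.

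The first step would be to check that $\tau$ is a faithful positive trace on the dense $*$-subalgebra $\CH(G(K),\rho)\subset C^*_r(\CH(G(K),\rho))$. Positivity and faithfulness follow directly from $\tau(f^*\star f) = [f,f]\geq 0$, with equality iff $f=0$, so that $\tau$ is continuous for the $C^*$-norm and extends uniquely to a positive tracial state on the reduced $C^*$-algebra completion. The tracial identity $\tau(f_1\star f_2) = \tau(f_2\star f_1)$ is a direct unwinding: one expands $(f_1\star f_2)(1) = \int_{G(K)} f_1(x)f_2(x^{-1})\,d\mu(x)$ and uses the cyclicity of the scalar trace on $\End(\C W)$ together with the defining property $f^*(x)=f(x^{-1})^*$ of the $*$-operation.

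The second step is to invoke the fact that $C^*_r(\CH(G(K),\rho))$ is of Type~I. By \cite{L2}, combined with work of Morris, $\CH(G(K),\rho)$ is isomorphic to an extended affine Hecke algebra with explicit (possibly unequal) parameters, and the description of its tempered spectrum given in \cite{OS1} (see also \cite{BHK}) shows that $\widehat{C^*_r(\CH(G(K),\rho))}$ is a countable disjoint union of compact components, each carrying a smooth family of finite-dimensional irreducible representations. In particular, every irreducible $*$-representation of $C^*_r(\CH(G(K),\rho))$ is finite-dimensional, so the algebra is postliminal.

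Finally, Dixmier's disintegration theorem applied to the faithful positive tracial state $\tau$ on the postliminal $C^*$-algebra $C^*_r(\CH(G(K),\rho))$ yields the desired positive Borel measure $\hat\mu_\rho$ on the spectrum satisfying $\tau(f) = \int \tr\pi(f)\,d\hat\mu_\rho(\pi)$. Uniqueness is a consequence of the separating property of the collection of irreducible characters $\{\tr\pi(\cdot)\}$ on a norm-dense subalgebra. The main substantive obstacle in this argument is the verification of the Type~I property (equivalently, the control of the tempered dual); everything else is formal once $\tau$ is recognized as a finite trace and $C^*_r(\CH(G(K),\rho))$ is known to be postliminal.
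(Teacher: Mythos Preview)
The paper does not give its own proof of this theorem: it is stated with attribution to \cite{Dix} and used as a black box, as is customary for the abstract Plancherel formula for normalized Hilbert algebras. There is therefore no ``paper's proof'' to compare against; your proposal is really an expansion of why the cited general theory applies in this concrete situation, and as such it is reasonable and essentially correct.

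A couple of minor remarks on your sketch. First, you oscillate between ``lower semi-continuous trace'' and ``state''; in this case $\tau(1_\rho)=[1_\rho,1_\rho]=1$ (a one-line computation from the definitions of $1_\rho$ and $[\,\cdot\,,\cdot\,]$), so $\tau$ is a genuine finite trace and extends to a bounded positive tracial functional on $C^*_r(\CH(G(K),\rho))$, which simplifies the appeal to Dixmier. Second, for the Type~I property you cite the structure theory of affine Hecke algebras via \cite{L2,OS1}; a more direct route is to use the Morita equivalence $m_\rho$ of \cite{Mo,MP} with the Bernstein block $\C R(G(K),\rho)$ and then invoke Bernstein's theorem that reductive $p$-adic groups are of Type~I. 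Either path works, and the paper does not commit to one since it simply cites \cite{Dix}.
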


The transfer of Plancherel measures under the functor $m_\rho$ behaves
very well. More precisely, if we let $\widehat{C^*_r(G(K),\rho)}$
denote the support of the Plancherel measure of $G(K)$ in the
subcategory $\C R(G(K),\rho)$, then the following result holds.

\begin{theorem}[{\cite[Theorem B]{BHK}}] The functor $m_\rho$ induces a
  homeomorphism 
$$\hat m_\rho: \widehat{C^*_r(G(K),\rho)}\to \widehat{C^*_r(\C
  H(G(K),\rho))},$$
such that for every Borel set $S$ of $\widehat{C^*_r(G(K),\rho)}$, one
has
\begin{equation}
\hat\mu(S)=\frac{\rho(1)}{\mu(P)}~\hat\mu_\rho(\hat m_\rho(S)).
\end{equation}
\end{theorem}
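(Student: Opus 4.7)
The plan is to prove the theorem in two stages: first upgrade the Morita equivalence $m_\rho$ of abelian categories to a homeomorphism of tempered duals, and then pin down the scaling factor $\rho(1)/\mu(P)$ by testing the two Plancherel formulas against an explicit function.

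For the first stage, I would exploit the fact (from \cite{Mo,MP}) that $(P,\rho)$ is a type, so $V\mapsto V_\rho$ is an equivalence between $\C R(G(K),\rho)$ and $\C H(G(K),\rho)$-mod, implemented by the bimodule $\Hom_P[\wti\rho,\C H(G(K))]$. The crucial step is to verify that this bimodule carries a pre-Hilbert structure compatible with the $*$-operations on both sides: on $\C H(G(K),\rho)$ via $f^*(x)=f(x^{-1})^*$ and the inner product $[\cdot,\cdot]$ introduced above, and on the group side via the usual adjoint for unitary $G(K)$-representations. Once this compatibility is checked, the Morita equivalence extends to an isomorphism (more precisely, a strong Morita equivalence) of the reduced $C^*$-algebra completions of $\C H(G(K),\rho)$ and of the corresponding direct summand of $C^*_r(G(K))$. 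A strong Morita equivalence of $C^*$-algebras induces a canonical homeomorphism of their spectra, which is exactly $\hat m_\rho$.

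For the second stage, I would compare Plancherel formulas by evaluating both sides on the element $1_\rho$, or more flexibly on $1_\rho\star f\star 1_\rho$ for $f\in \C H(G(K))$. On the Hecke algebra side, the definition of $\hat\mu_\rho$ gives
\begin{equation*}
[f,1_\rho]=\int_{\widehat{C^*_r(\C H(G(K),\rho))}}\tr\pi(f)\,d\hat\mu_\rho(\pi).
\end{equation*}
On the group side, viewing an element $f\in\C H(G(K),\rho)$ as a compactly supported smooth $\End(\C W)$-valued function on $G(K)$ and unwinding the definition $1_\rho=\mu(P)^{-1}\wti\rho\chi_P$, the convolution $f^*\star f$ evaluated at $1$ can be rewritten as $f(1)$ up to the normalization constant $\rho(1)/\mu(P)$; applying the abstract Plancherel formula for $G(K)$ then yields an integral over $\widehat{C^*_r(G(K),\rho)}$. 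Matching the two expressions, using that $\tr\hat m_\rho(\pi)(f)=\tr\pi(f)$ (since $m_\rho$ is defined via a Hom functor whose trace is preserved), produces the desired identity $\hat\mu(S)=\tfrac{\rho(1)}{\mu(P)}\hat\mu_\rho(\hat m_\rho(S))$, first for the unit $1_\rho$ and then for arbitrary Borel sets by standard measure-theoretic density arguments.

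The hardest step will be the compatibility of the $*$-structures and the associated claim that $m_\rho$ extends to a homeomorphism of $C^*$-algebra spectra rather than merely a bijection between sets of irreducibles. One has to show that a net of tempered $G(K)$-representations in $\C R(G(K),\rho)$ converges in the Fell topology if and only if the net of corresponding $\C H(G(K),\rho)$-modules does; this requires knowing that the Morita bimodule has the correct completion properties (essentially that it is a Hilbert $C^*$-module implementing a strong Morita equivalence). Once that analytic framework is in place, the scaling factor $\rho(1)/\mu(P)$ emerges from a routine normalization check.
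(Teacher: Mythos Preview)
The paper does not prove this statement; it is quoted verbatim as \cite[Theorem B]{BHK} and used as a black box. There is therefore no proof in the paper to compare your proposal against.

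As to your sketch itself: the overall architecture is correct and is indeed the strategy of \cite{BHK}. The equivalence $m_\rho$ is upgraded to a strong Morita equivalence of $C^*$-algebras via a Hilbert bimodule, which gives the homeomorphism of spectra, and the Plancherel constant is read off by evaluating both Plancherel formulas on the idempotent $1_\rho$. Your identification of the ``hardest step'' is accurate: the analytic content lies in showing that the algebraic Morita bimodule completes to a genuine imprimitivity bimodule between the relevant reduced $C^*$-algebras, so that the Rieffel correspondence applies. One point to tighten: the claim that $\tr\hat m_\rho(\pi)(f)=\tr\pi(f)$ is not quite the right bookkeeping, since $\pi$ acts on $V$ while $\hat m_\rho(\pi)$ acts on $V_\rho$; what one actually computes is $\tr_V(1_\rho\star f\star 1_\rho)$ in terms of $\tr_{V_\rho}(f)$, and the factor $\rho(1)/\mu(P)$ appears precisely from the relation between these two traces via the idempotent $1_\rho$.
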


\begin{example}\label{ex:Iwahori}
Suppose $P=I$ is an Iwahori subgroup, and $\rho=1_{\overline I}$ is the trivial
representation of $\overline I.$ Then $\C H(G(K),1_{\overline I})$ is
the Iwahori-Hecke algebra of $I$-biinvariant functions on $G(K).$
Normalize the Haar measure $\mu$ such that $\mu(I)=1.$ Then for every
irreducible discrete series representation $V$ of $G(K)$ such that
$V^I\neq 0,$ the formal degree is
\begin{equation}
\hat\mu(V)=\hat\mu_{1_{\overline I}}(V^I).
\end{equation}
\end{example}

In this way, the computation of formal degrees of discrete series
representations in $\C R(G(K),\rho)$ can be reduced to the similar
problem for $\C H(G(K),\rho)$-modules.

\subsection{} All the algebras $\C H(G(K),\rho)$ in the previous
subsection are specializations of affine Hecke algebras that we define next.

Let $\C R=(X,R,X^\vee, R^\vee,F)$ be a based root datum. We retain the notation from section \ref{sec:affine-ell}.

Let $\mathbf q=\{\mathbf q(s): s\in S^a\}$ be a set of invertible, commuting
indeterminates such that $\mathbf q(s)=\mathbf q(s')$ whenever $s,s'$ are
$W^a$-conjugate. Let $\Lambda=\bC[\mathbf q(s),\mathbf q(s)^{-1}: s\in S^a]$.

\begin{definition}[Generic affine Hecke algebra]\label{d:Hecke-generic} The generic affine
  Hecke algebra $\C H(\C R,\mathbf q)$ associated to the root datum
  $\C R$ and the indeterminates $\mathbf q$ is the unique associative,
  unital $\Lambda$-algebra with basis $\{N_w: w\in W^e\}$ and
  relations
\begin{enumerate}
\item[(i)] $N_w N_{w'}=N_{ww'},$ for all $w,w'\in W$ such that
  $l(ww')=l(w)+l(w')$;
\item[(ii)] $(N_s-\mathbf q(s))(N_s+\mathbf q(s)^{-1})=0$ for all $s\in S^a.$
\end{enumerate}
\end{definition}

Fix an indeterminate $\mathbf q.$ Given a $W^a$-invariant function $m:S^a\to \bR$, we may
define a homomorphism $\lambda_{m}:\Lambda\to \bC[\mathbf q]$, $\mathbf
q(s)=\mathbf q^{m(s)}$.  Consider the specialized affine Hecke algebra
\begin{equation}
\C H(\C R, m)=\C H(\C R,\mathbf q)\otimes_\Lambda \bC_{\lambda_{m}}.
\end{equation}

\subsection{}We return now to the setting of unipotent representations
of the $p$-adic group $G(K)$. Suppose that $G$ is simple of adjoint type and $G(K)$ is split. Let $\widehat{G(K)}^\uni$ denote the set of (equivalence classes of) irreducible unipotent representations of $G(K)$, and $\widehat{G(K)}^\uni_\ds$ the subset of irreducible unipotent discrete series representations. 

Let $G^\vee$ denote the complex dual group. 
An element $x\in G^\vee$ is called elliptic if the centralizer $Z_{G^\vee}(x)$ does not contain any nontrivial torus, or equivalently, the conjugacy class of $x$ does not meet any proper Levi subgroup of $G^\vee.$ For every $x\in G^\vee,$ define the A-group
\begin{equation}
A(x)=Z_{G^\vee}(x)/Z_{G^\vee}(x)^0 Z(G^\vee).
\end{equation}
Let $\widehat{A(x)}$ be the set of isomorphism classes of irreducible representations of $A(x)$.

The Deligne-Langlands-Lusztig classification for $\widehat{G(K)}^\uni$
takes the following form. In the case of representations with Iwahori
fixed vectors, it was proved in \cite[Theorem 7.12 and Theorem 8.3]{KL}.

\begin{theorem}[{\cite[Corollary 6.5(a)]{L2},\cite[\S10.9 and Theorem 10.11]{L3}}]\label{t:DLL} There exists a natural one-to-one correspondence
\begin{equation}
\widehat{G(K)}^\uni\longleftrightarrow G^\vee\backslash\{(x,\phi):\ x\in G^\vee,\ \phi\in\widehat{A(x)}\},
\end{equation}
such that
\begin{equation}
\widehat{G(K)}^\uni_\ds\longleftrightarrow G^\vee\backslash\{(x,\phi):\ x\in G^\vee \text{ elliptic},\ \phi\in\widehat{A(x)}\}.
\end{equation}
\end{theorem}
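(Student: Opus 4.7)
The plan is to reduce the statement to a classification of simple modules over affine Hecke algebras, and then invoke the Kazhdan--Lusztig geometric parameterization.

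First, I would exploit the decomposition of the unipotent part of the Bernstein category coming from types. By the Morris--Moy--Prasad equivalence $m_\rho$ recalled in (\ref{functor-rho}), every irreducible unipotent representation of $G(K)$ occurs in a unique category $\C R(G(K),\rho)$ attached to a $G(K)$-conjugacy class of pairs $(P,\rho)$, where $P$ is a parahoric subgroup and $\rho$ a cuspidal unipotent representation of $\overline P$. It therefore suffices to classify simple $\C H(G(K),\rho)$-modules as $(P,\rho)$ varies. Lusztig's identification \cite{L2,L3} of each $\C H(G(K),\rho)$ with a specialization $\C H(\C R_\rho, m_\rho)$ of a generic affine Hecke algebra, for an explicitly determined root datum $\C R_\rho$ and $W^a$-invariant parameter function $m_\rho$ computed from the Frobenius eigenvalues on the cuspidal local system $\rho$, turns the problem into a uniform Hecke-algebraic question.

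Second, I would invoke the geometric parameterization. For the Iwahori type $(I,\triv)$, the algebra is the Iwahori--Hecke algebra of $G^\vee$ and \cite[Theorem 7.12]{KL} gives a bijection between simple modules and $G^\vee$-orbits of Kazhdan--Lusztig triples $(s,u,\phi)$, where $s \in G^\vee$ is semisimple, $u \in G^\vee$ is unipotent with $sus^{-1} = u^q$, and $\phi \in \widehat{A(s,u)}$ appears in the equivariant Borel--Moore homology of the simultaneous fixed-point variety in the flag variety of $G^\vee$. Setting $x = su$ packages this as a pair $(x,\phi)$ with $\phi \in \widehat{A(x)}$. For a general cuspidal type $(P,\rho)$, Lusztig \cite{L3} constructs the analogous geometric classification inside a subgroup $H_\rho \subset G^\vee$ arising from the Langlands parameter of $\rho$; gluing these partial parameterizations over all conjugacy classes of $(P,\rho)$ assembles into the full bijection with $G^\vee$-orbits of pairs $(x,\phi)$ in $G^\vee$.

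Third, I would pin down the discrete series correspondence. Via Casselman's temperedness criterion transferred to the Hecke algebra (as in \cite[\S8]{KL} and the compatibility of Plancherel decompositions recalled above), a simple $\C H(\C R_\rho,m_\rho)$-module is a discrete series module precisely when its central character together with the nilpotent part does not arise from any proper Levi, equivalently when the associated element $x = su \in G^\vee$ satisfies that $Z_{G^\vee}(x)$ contains no nontrivial central torus of $G^\vee$, i.e.\ $x$ is elliptic. The naturality in the theorem is secured by the fact that each of these steps --- the type decomposition, the Hecke-algebra identification, and the geometric bijection --- is canonical up to $G(K)$- or $G^\vee$-conjugacy.

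The main obstacle is the second step: extending the geometric classification from the Iwahori (equal parameter) case of \cite{KL} to arbitrary unipotent types, where the parameters $m_\rho$ are generically unequal and the relevant geometry lives inside the proper subgroup $H_\rho \subset G^\vee$ cut out by the cuspidal local system. Matching the families of types $(P,\rho)$ with complementary subsets of the $G^\vee$-parameter space $\{(x,\phi)\}$, consistently across all conjugacy classes, is the deep content of Lusztig's analysis in \cite{L2,L3} and relies on a detailed comparison between the Frobenius eigenvalues determining $m_\rho$ and the structure of centralizers in $G^\vee$.
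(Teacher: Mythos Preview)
The paper does not supply its own proof of this theorem: it is stated as a citation of Lusztig's results in \cite{L2,L3} (with the Iwahori case due to \cite{KL}), followed only by a remark that the discrete-series part was obtained independently for split exceptional groups in \cite{Re3}, for $SO(2n+1)$ in \cite{Wa}, and via spectral transfer in \cite{O4}. There is therefore nothing in the paper to compare your argument against beyond the attribution itself.

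Your outline is a faithful high-level summary of the strategy in the cited references: reduce to Hecke algebras via the type-theoretic equivalence $m_\rho$, apply the Kazhdan--Lusztig geometric classification in the Iwahori block and Lusztig's extension to the remaining unipotent types, and read off the discrete-series condition as ellipticity of the parameter. One point deserves sharper phrasing. In your second step you correctly note that for the Iwahori block only those $\phi$ appearing in the homology of the Springer fibre arise, but you then write that setting $x=su$ ``packages this as a pair $(x,\phi)$ with $\phi\in\widehat{A(x)}$'' as if the Iwahori block alone already sees all of $\widehat{A(x)}$. It does not: the assertion that, as $(P,\rho)$ ranges over all cuspidal unipotent types, the resulting partial parameter sets partition $\bigsqcup_x \widehat{A(x)}$ with neither gaps nor overlaps is exactly the substantive content of \cite{L2,L3} for adjoint $G$, and is what justifies the word ``natural'' in the statement. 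You do identify this as the main obstacle in your final paragraph, so the outline is internally consistent; just be careful not to let the phrasing in step two suggest that the gluing is a formality.
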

The parameterization of $\widehat{G(K)}^\uni_\ds$ was also obtained independently in the case when $G$ is split exceptional 
in \cite{Re3}, and when $G=SO(2n+1)$ in \cite{Wa}. It also follows from \cite[Theorem 3.4, Proposition 3.11]{O4}, in 
view of results of Slooten \cite{Sl}, \cite{OS2}, and \cite{CK} (see the discussion in \cite[paragraph 3.3.2]{O4}).

\smallskip

If $x\in G^\vee,$ write the Jordan decomposition $x=su$, where $s$ is semisimple and $u$ is unipotent. Notice that $x$ is elliptic if and only if the centralizer $Z_{G^\vee}(s)$ is semisimple and $u\in Z_{G^\vee}(s)$ is a distinguished unipotent element in the sense of Bala-Carter \cite{Ca}. Let $\Phi_u: SL(2,\bC)\to Z_{G^\vee}(s)$ be a Lie homomorphism, mapping $\left(\begin{matrix} 1&1\\0&1\end{matrix}\right)\mapsto u$, and set
\begin{equation}\label{e:modified-s}
s'=s\Phi_u(\left(\begin{matrix} q^{1/2}&0\\0&q^{-1/2}\end{matrix}\right)).
\end{equation}
Then $\operatorname{Ad}(s')u=u^{q}.$ Fix a maximal torus
$T^\vee\subset G^\vee$. Without loss of generality, we may arrange
that $s,s'\in T^\vee.$ For every root $\al$ of $(G^\vee,T^\vee)$, let
$e_\al$ denote the corresponding character of $T^\vee.$ 
We fix a Haar measure $\mu$ on $G(K)$ such that $\mu(I)=1$ for an Iwahori subgroup $I$ of $G(K)$.
The formal 
degree of $\pi\in \widehat{G(K)}^\uni_\ds$ is known to be given as in
the following theorem. This result is a special case of 
the conjecture formulated in \cite{HII}. The $q$-part in the formula was conjectured in
\cite{HO1}, and proved in \cite{HO2} in the case of unipotent discrete series representations
with Whittaker vectors. The explicit form of the
constants that multiply the $q$-parts was conjectured in \cite{Re3},
where the formula was also verified for exceptional split
groups and all unipotent discrete series. Recently, this expression
of formal degrees for all unipotent discrete series of unramified simple 
$p$-adic groups was verified in \cite{O3,O4}; the method also relies on
corresponding results for discrete series of affine Hecke algebras
from \cite{OS2}, \cite{CKK}, and \cite{CO}.

\begin{theorem}\label{t:formal-degree}
Suppose $\pi_{x,\phi}\in \widehat{G(K)}^\uni_\ds$ is parameterized by the pair $(x,\phi)$, for an elliptic element $x=su\in G^\vee$. Let $s'$ be as in (\ref{e:modified-s}). The formal degree of $\pi$ equals
\begin{equation}
\hat\mu(\pi_{x,\phi})=\frac{\phi(1)}{|A(x)||Z(G^\vee)|} m_x(q),\quad \text{where } m_x(q)=q^\nu\displaystyle{\frac{\prod'_{\al}(e_\al(s')-1)}{\prod'_\al (q e_\al(s')-1)}}, 
\end{equation}
where $\prod'$ means that the zero factors are ignored,  the products
vary over all roots $\al$ of $(G^\vee,T^\vee)$, and $\nu$ is the
number of positive roots.
\end{theorem}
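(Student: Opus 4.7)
The plan is to reduce the computation to an affine Hecke algebra problem via type theory, apply a residue calculus for the Plancherel measure at a discrete series module, and finally identify the numerical constants via the Kazhdan-Lusztig-Lusztig parameterization of Theorem \ref{t:DLL}. This is the strategy carried out in \cite{O3,O4}, which builds on \cite{Re3}, \cite{HO1}, \cite{HO2}, \cite{OS2}, \cite{CKK}, and \cite{CO}.

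First, by Lusztig's classification together with the work of Morris and Moy-Prasad on types, each irreducible unipotent discrete series $\pi_{x,\phi}$ lies in a Bernstein block $\C R(G(K),\rho)$ attached to a cuspidal unipotent type $(P,\rho)$, where the Levi of $\overline P$ corresponds (via the Jordan decomposition $x=su$) to the centralizer $Z_{G^\vee}(s)^0$. The equivalence $m_\rho$ transports $\pi_{x,\phi}$ to a discrete series module $V_{x,\phi}$ of $\C H(G(K),\rho)$, which by \cite{L3} is isomorphic to a specialization $\C H(\C R_s,m_s)$ of an affine Hecke algebra whose root datum is extracted from $Z_{G^\vee}(s)^0$ and whose (possibly unequal) parameter function $m_s$ is explicitly determined. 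The transfer of Plancherel measures from Theorem B of \cite{BHK} then reduces the problem to
\[
\hat\mu(\pi_{x,\phi})=\frac{\rho(1)}{\mu(P)}\,\hat\mu_\rho(V_{x,\phi}).
\]

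Second, I would apply the residue formula for the formal degree of a discrete series module of $\C H(\C R_s,m_s)$. The Plancherel density of an affine Hecke algebra is governed by the Macdonald $c$-function, and each discrete series module occurs as a residue of this density at a particular point of the complex torus, a so-called residual coset. As shown in \cite{OS2}, \cite{CKK}, \cite{CO}, together with Slooten's algorithm \cite{Sl}, residual cosets are parameterized by pairs $(u,\phi)$ with $u$ a distinguished unipotent element in $Z_{G^\vee}(s)^0$ and $\phi\in\widehat{A(x)}$, which matches the parameterization of Theorem \ref{t:DLL} exactly. Identifying the residue point with $s'$ via the relation $s'=s\,\Phi_u(\mathrm{diag}(q^{1/2},q^{-1/2}))$, and cancelling the zero factors corresponding to the roots on which $s'$ acts trivially, the residue simplifies to the $q$-rational function
\[
m_x(q)=q^{\nu}\,\frac{\prod'_{\al}(e_\al(s')-1)}{\prod'_\al(q\,e_\al(s')-1)},
\]
which is the $q$-part of the claim.

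Finally, the constant $\phi(1)/(|A(x)||Z(G^\vee)|)$ is accounted for as follows. The factor $|Z(G^\vee)|^{-1}$ reflects the comparison between the Haar measure on $G(K)$ with $\mu(I)=1$ and the dual-side normalization in Lusztig's parameterization. The ratio $\phi(1)/|A(x)|$ arises from Clifford theory: the fibre of unipotent representations over a fixed semisimple datum $s$ is $A(x)$-equivariant, and the formal degree of the $\phi$-isotypic summand $V_{x,\phi}$ picks up the characteristic weight $\phi(1)/|A(x)|$. The main obstacle is the second step: executing the residue calculus uniformly across all cuspidal types, including non-Iwahori types attached to unequal parameter affine Hecke algebras, and showing that after the simplification prescribed by the combinatorics of residual cosets the output is precisely the clean product $m_x(q)$. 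The delicate matching of the tempered central character of $V_{x,\phi}$ to the Langlands parameter $x=su$ via $s'$, and the treatment of exceptional unipotent classes with disconnected component groups, is what requires the technical machinery of \cite{OS2,CKK,CO,O3,O4}.
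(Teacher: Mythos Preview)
The paper does not supply its own proof of this theorem; it states the result as known and attributes it to \cite{O3,O4}, building on \cite{Re3}, \cite{HO1}, \cite{HO2}, \cite{OS2}, \cite{CKK}, and \cite{CO}. Your proposal is therefore not being compared against an in-paper argument but against the strategy of those references, and at the level of broad strokes you have summarized that strategy correctly: reduction to an affine Hecke algebra via types and \cite{BHK}, residue calculus for the Plancherel density following \cite{O,OS2}, and identification of the rational constants.

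One point in your sketch is imprecise and worth correcting. You write that the Bernstein block is attached to a cuspidal type $(P,\rho)$ ``where the Levi of $\overline P$ corresponds (via the Jordan decomposition $x=su$) to the centralizer $Z_{G^\vee}(s)^0$'', and that $\C H(G(K),\rho)\cong\C H(\C R_s,m_s)$ with root datum ``extracted from $Z_{G^\vee}(s)^0$''. This conflates the type-theoretic data with the Langlands parameter. The Hecke algebra and its root datum are fixed once $(P,\rho)$ is chosen, while many discrete series with different semisimple parts $s$ live in the same block; $s$ enters not through the root datum of the Hecke algebra but as (part of) the central character of the module $V_{x,\phi}$, i.e.\ as the location of a residual point on the torus of that fixed algebra. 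The actual mechanism in \cite{O3,O4} is to introduce \emph{spectral transfer morphisms} comparing each unipotent affine Hecke algebra to the Iwahori one, and it is through these morphisms that the dual-group data and the product formula for $m_x(q)$ emerge uniformly. With that correction, your outline is a fair high-level account of the literature the paper cites.
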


\subsection{}\label{sec:4.5} To state our conjecture, fix $u$ a
representative of a unipotent conjugacy class in $G^\vee$ such that $u$ is the unipotent part of an elliptic element in $G^\vee.$ 
Set
\begin{equation}
\Gamma_u=Z_{G^\vee}(u)/Z_{G^\vee}(u)^0Z(G),
\end{equation}
a finite group. Let $M(\Gamma_u)$, $M(\Gamma_u)'$, and $\{~.~\}$ be as in section \ref{sec:fourier}. Recall that the elements of $M(\Gamma_u)$ are $\Gamma_u$-orbits of pairs $(y,\rho)$, where $y$ is an element of $\Gamma_u$ and $\rho$ is an irreducible representation of $C_{\Gamma_u}(y).$

Let $\Sigma_u$ be the set of $Z_{G^\vee}(u)$-orbits on $$\{(s,\phi):\ s\in
Z_{G^\vee}(u)\text{ semisimple},\ \phi\in\widehat{A(su)}\}.$$
If $s$ is such that $x=su$ is elliptic, then according to Theorem \ref{t:DLL}, the pair $(s,\phi)$ parameterizes an element of $\widehat{G(K)}^\uni_\ds.$

Suppose $(s,\phi)\in\Sigma_u$ is given. Conjugating by $Z_{G^\vee}(u)^0$ if necessary, we may assume that $s\in T^\vee$. The natural inclusion $Z_{G^\vee}(su)\to Z_{G^\vee}(u)$ induces a map
$A(su)\to A(u)$. This map is well defined because we have assumed $G^\vee$ to be simply-connected, thus $Z_{G^\vee}(s)$ is connected, and therefore $Z_{G^\vee}(su)^0=Z_{G^\vee}(s)\cap Z_{G^\vee}(u)^0$. If we denote by $\bar s$ the image of $s$ in $A(u)$, it is clear that the image of the map lands in $C_{A(u)}(\bar s).$ Therefore we have a well-defined map
\begin{equation}\label{e:components}
A(su)\longrightarrow C_{\Gamma_u}(\bar s).
\end{equation}

\begin{remark}
When $u$ is distinguished, the map (\ref{e:components}) is an isomorphism, but not in general. For example, suppose $s$ is a semisimple element in $E_7$ whose centralizer $Z_{G^\vee}(s)$ is of type $A_3+A_3+A_1$, and $u$ is a regular unipotent in $Z_{G^\vee}(s)$. The conjugacy class of $u$ in $E_7$ is labelled $A_4+A_1$ in Bala-Carter notation. Then $A(u)=\bZ/2\bZ$ and $A(su)=\bZ/4\bZ$, see \cite[page 71]{Re3}.
\end{remark}

Suppose from now on that $u$ is distinguished. Then $\Sigma_u$ can be identified with $M(\Gamma_u)$ via the map $(s,\phi)\to (y,\rho),$ where $y=\bar s$ is the coset of $s$ in $\Gamma_u$, and $\rho$ is the irreducible representation of $C_{\Gamma_u}(y)$ corresponding to $\phi$ under the isomorphism (\ref{e:components}).
Therefore, the unique irreducible discrete series $\pi_{s,u,\phi}$ parameterized by $(s,u,\phi)\in \Sigma_u$ can be redenoted as $$\pi_{u,y,\rho},\text{ where }(y,\rho)\in M(\Gamma_u).$$
Now suppose the discrete series representation $(\pi_{x,\phi},V_{x,\phi})$ has nonzero vectors under the action of the Iwahori subgroup $I$. The space of Iwahori fixed vectors $V_{x,\phi}^I$ is a discrete series module for the Iwahori-Hecke algebra $\C H(G(K),1_{\overline I})$. This algebra is obtained from the generic algebra $\C H(\C R,m)$ in Definition \ref{d:Hecke-generic} with $m(s)\equiv 1$, by specializing $\mathbf q$ to $q$, the cardinality of the residue field of $K$. The module $V_{s,\phi}^I$ itself is the specialization $\mathbf q=q$ of an $\C H(\C R,m)$-module  $Y_{x,\phi}(\mathbf q)$:
\begin{equation}
V_{x,\phi}^I=Y_{x,\phi}(\mathbf q)|_{\mathbf q=q}.
\end{equation}
Specializing instead to $\mathbf q=1$, one gets
\begin{equation}
\lim V_{x,\phi}^I:= Y_{x,\phi}(\mathbf q)|_{\mathbf q=1},\text{ a module for } W^e.
\end{equation}
Geometrically, this $W^e$-module is realized in the cohomology of the Springer fiber $\C B_x=\{B^\vee\text{ Borel subgroup of }G^\vee: x\in B^\vee\}$, i.e.,
\begin{equation}
\lim V_{x,\phi}^I=H^\bullet(\C B_x)^\phi\otimes\sgn, \text { as }W^e\text{-modules},
\end{equation}
see \cite[Corollary 8.1]{Re3} which relies on results of \cite{Ka} and \cite{L6}. 
When the discrete series representation is denoted by
$\pi_{u,y,\rho}$, let $\lim \pi_{u,y,\rho}^I$ denote the resulting
$W^e$-module. With this notation, we may now state our main
conjecture. It should be compared with Lusztig's formula (\ref{formal-finite}) for finite Lie groups.

\begin{conjecture}\label{conj-main} The formal degree of the unipotent discrete series $G(K)$-representation $\pi_{u,y,\rho}$, for a distinguished unipotent element $u$, is given by
\begin{equation}\label{e:conj-main}
\hat\mu(\pi_{u,y,\rho})=\frac{1}{|Z(G^\vee)|}\sum_{(y',\rho')\in M(\Gamma_u)'}\{(y,\rho),(y',\rho')\}~ F^e_{[\lim\pi_{u,y',\rho'}^I]},
\end{equation}
where $F^e_{[\lim\pi_{u,y',\rho'}^I]}$ is the elliptic fake degree from Definition \ref{d:ell-fake-affine}.
\end{conjecture}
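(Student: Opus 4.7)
The plan is to verify Conjecture \ref{conj-main} by reducing it to an explicit identity between rational functions in $q$, combining (i) the Heiermann--Opdam formula in Theorem \ref{t:formal-degree} for the left hand side and (ii) a geometric computation of the right hand side via Springer theory. A convenient first move is to Fourier-invert: since the matrix $\{~,~\}$ is an involution (up to the $\Delta$-signs of \cite{L1}), the conjecture becomes equivalent to the form (\ref{conj-equiv}) that expresses each elliptic fake degree $F^e_{[\lim\pi_{u,y,\rho}^I]}$ as a $\{~,~\}$-weighted sum of formal degrees $\hat\mu(\pi_{u,y',\rho'})$. Because $u$ is distinguished, the identification $\Sigma_u\cong M(\Gamma_u)$ of section \ref{sec:4.5} ensures that the two sides are indexed by the same finite set.

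Next I would compute the left hand side $F^e_{[\lim\pi_{u,y',\rho'}^I]}$. Using the realization $\lim\pi_{u,y',\rho'}^I = H^\bullet(\C B_{y'u})^{\rho'}\otimes\sgn$ as a $W^e$-module and the decomposition (\ref{ind-iso}), Definition \ref{d:ell-fake-affine} reduces the computation to the finite $W$-piece $\overline\pr_1[H^\bullet(\C B_{y'u})^{\rho'}\otimes\sgn]$. By the Springer--Slooten procedure described after Proposition \ref{p:Bn-fake} (based on \cite{C,Sl}), this projection equals, up to a sign $\ep(u,\rho')$, an explicit elliptic $W$-character attached to $(y'u,\rho')$. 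For classical types this character is an $S_n$-irreducible $\sigma_\lambda$ viewed in $\overline{R}(W(B_n))$ or $\overline{R}(W(D_n))$ via Lemma \ref{l:Bn-on}, and its elliptic fake degree is given by the hook-content product of Proposition \ref{p:Bn-fake} (resp. \ref{p:Dn-fake}). Substituting Theorem \ref{t:formal-degree} into the right hand side turns the $\{~,~\}$-weighted sum into an explicit $\bZ$-linear combination of Macdonald-type ratios $\prod'(e_\al(s')-1)/\prod'(qe_\al(s')-1)$ indexed by $s\in\Gamma_u$, so the conjecture reduces to a closed rational-function identity.

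In classical types this identity can then be checked by evaluating both sides on elliptic conjugacy classes of $W^e$ and invoking the linear independence provided by Proposition \ref{p:ind-ell} and Corollary \ref{c:ind-ell}; in exceptional types one would verify the identity numerically using Lusztig's tables of $\Gamma_u$, the Fourier matrix, and the Springer correspondence, which is the intended scope of the phrase ``verify in some cases'' in the introduction. The chief obstacle is the combinatorial matching between the data $(y',\rho')\in M(\Gamma_u)'$ and the elliptic $W$-characters produced by $\overline\pr_1 H^\bullet(\C B_{y'u})^{\rho'}$: this matching is the arithmetic analog of Lusztig's parametrization of families and rests on delicate cell-module data for the affine Hecke algebra at unequal parameters, together with control of the signs $\ep(u,\rho')$ whose systematic study is only undertaken in \cite{CO}. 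A secondary obstacle is the failure of linear independence of $\{1/\det(1-qw)\}$ on elliptic classes in $F_4$ (and similarly in $E_7$, $E_8$), noted in the Remark after Proposition \ref{p:ind-ell}, which prevents a uniform reduction to elliptic-character identities and forces a direct computation in those cases.
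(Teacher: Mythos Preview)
First, a framing point: the statement is labelled a \emph{conjecture}, and the paper does not prove it in general. What the paper does is (a) derive the simplified equivalent form (\ref{conj-equiv}), and (b) verify the conjecture for exceptional groups with $u$ distinguished by computing the elliptic fake degrees directly (Appendix \ref{sec:exc}) and comparing with the known formal degrees from \cite{Re3}. So you should be presenting a verification strategy for cases, not a proof.

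Your proposal has two substantive errors. First, you misread (\ref{conj-equiv}): it is \emph{not} a Fourier-inverted form expressing fake degrees in terms of formal degrees. It is the same direction as (\ref{e:conj-main})---formal degree on the left, fake degrees on the right---but drastically simplified. The simplification is the key step you miss: by (\ref{ind-coh}) and Definition~\ref{d:ell-fake-affine}, the elliptic fake degree $F^e_{[\lim\pi_{u,y',\rho'}^I]}$ vanishes unless $y'=1$, because $\lim\pi_{u,y',\rho'}^I$ is induced from $W_s$ with $s\neq 1$ whenever $y'\neq 1$, and $F^e$ only sees the $s=1$ component. This is (\ref{nonzero-fake}). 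So the sum over $M(\Gamma_u)'$ collapses to a sum over $\widehat{A(u)}_0$, the Fourier entries become the elementary $\{(y,\rho),(1,\rho')\}=\rho(1)\rho'(y)/|C_{\Gamma_u}(y)|$, and one arrives at (\ref{conj-equiv}) and then (\ref{e:conj-second}).

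Second, your plan to compute $\overline\pr_1[H^\bullet(\C B_{y'u})^{\rho'}\otimes\sgn]$ for general $y'$ via Springer--Slooten combinatorics is beside the point once you know the vanishing; and your proposal to then reduce to a rational-function identity checked via Proposition~\ref{p:ind-ell} is not how the paper proceeds. The paper does not verify the conjecture for classical types; Proposition~\ref{p:ind-ell} is used only to draw the consequence (\ref{dist-packets}) about $L$-packets. For the actual verification (exceptional groups), the paper simply tabulates $F^W_{[H^\bullet(\C B_u)^{\phi'}\otimes\sgn]}$ for all $(u,\phi')$ with $u$ distinguished, plugs into (\ref{conj-equiv}), and compares with \cite{Re3}. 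The obstacles you name (sign control from \cite{CO}, failure of linear independence in $F_4$) are real issues for a uniform proof but are not what blocks the present paper---the conjecture is simply left open outside the verified cases.
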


\begin{remark}
To extend Conjecture \ref{conj-main} beyond the case when $u$ is distinguished, it appears that one needs to use for $\Gamma_u$ a larger group then $A(u)$. A likely candidate is given by the group (and the extension of the exotic Fourier transform) proposed recently by Lusztig in \cite{L5}. In section \ref{sec:sp4}, we present an example of non-distinguished $u$ where $A(u)$ is still sufficient for formal degrees.
\end{remark}

Conjecture \ref{conj-main} predicts another concrete interpretation of the $q$-part $m_x(q)$ of the formal degree from Theorem \ref{t:formal-degree}. Write the elliptic element $x=su$ with $s\in T^\vee_\iso.$
By \cite{Ka}, see \cite[Proposition 8.1]{Re3}, we know that
\begin{equation}\label{ind-coh}
H^\bullet(\C B_x)=\Ind_s(H^\bullet(\C B^{Z_{G^\vee}(s)}_u)),\text{ as }W^e\text{-modules},
\end{equation}
where $\C B^{Z_{G^\vee(s)}}_u$ is the Springer fiber of $u$ in the flag variety for $Z_{G^\vee}(s)$, and thus $H^\bullet(\C B^{Z_{G^\vee}(s)}_u)$ is a $W_s$-representation. By Definition \ref{d:ell-fake-affine}, this implies that
\begin{equation}\label{nonzero-fake}
 F^e_{[\lim\pi_{s,u,\phi}^I]},=\begin{cases}F^{W}_{[H^\bullet(\C
    B_u)^\phi\otimes\sgn]},&\text{if } s=1,~\phi\in\widehat{A(u)}_0,\\
0,&\text{otherwise},
\end{cases}
\end{equation}
where $F^W_{[~]}$ is the elliptic fake degree for the finite group $W$
as in Definition \ref{d:ell-fake-finite} and $\widehat{A(u)}_0$ is defined in (\ref{e:Springer-type}). 

This implies that the only entries of the exotic Fourier transform
matrix that contribute are of the form:
\begin{equation}
\{(y,\rho),(1,\rho')\}=\frac {\rho(1)}{|C_{\Gamma_u}(y)|}\rho'(y).
\end{equation}
Changing to the notation $(s,u,\phi)\in\Sigma_u$ for the unipotent
discrete series parameters, Conjecture \ref{conj-main} is equivalent with:
\begin{equation}\label{conj-equiv}
\begin{aligned}
\hat\mu(\pi_{s,u,\phi})&=\frac{\phi(1)}{|C_{\Gamma_u}(s)||Z(G^\vee)|}\sum_{\phi'\in\widehat{A(u)}_0}\phi'(s) F^{W}_{[H^\bullet(\C
    B_u)^{\phi'}\otimes\sgn]}\\
&=\frac{(1-q)^l\phi(1)}{|A(su)||Z(G^\vee)|}\left\langle\sum_{\phi'\in\widehat{A(u)}_0}\phi'(s)H^\bullet(\C
B_u)^{\phi'},\frac{1}{\det(1-q\cdot~)}\right\rangle_W^\el\\
&=\frac{(1-q)^l\phi(1)}{|A(su)||Z(G^\vee)|}\left\langle H^\bullet(\C
B_u)^s,\frac{1}{\det(1-q\cdot~)}\right\rangle_W^\el.
\end{aligned}
\end{equation}
Here, we think of $s$ as an element of $A(u)$. Comparing this formula against Theorem \ref{t:formal-degree}, we can make following remark.
\begin{remark}
Conjecture \ref{conj-main} predicts
that if $x=su$, the
$q$-part $m_x(q)$ of the formal degree for the unipotent discrete
series $\pi_{x,\phi}$ is
\begin{equation}\label{e:conj-second}
m_q(x)=(1-q)^l~\left\langle H^\bullet(\C B_u)^s,\frac
  {1}{\det(1-q\cdot~)}\right\rangle^\el_{W}.
\end{equation}
\end{remark}

\subsection{} The expectation in (\ref{e:conj-second}) has an
interesting implication. Suppose $G^\vee$ is a classical group and $x=su$ and $x'=s'u'$ are
representatives of distinct elliptic conjugacy classes in $G^\vee$
such that $u$ and $u'$ are distinguished. We
claim that (\ref{e:conj-second}) implies that
\begin{equation}\label{dist-packets}
m_x(q)=m_{x'}(q)\text{ if and only if } u=u'\text{ and }
\phi(s)=\phi(s')\text{ for all }\phi\in \widehat{A(u)}_0.
\end{equation}
Indeed, by Corollary \ref{c:ind-ell} and (\ref{e:conj-second}), 
\begin{equation}
m_x(q)=m_{x'}(q)\text{ if and only if } H^\bullet(\C
B_u)^s=H^\bullet(\C B_{u'})^{s'}\text{ in }\overline R(W).
\end{equation}
By \cite{R} or, equivalently, by the homological results from
\cite{OS1} recalled in the next section, the set $\{H^\bullet(\C B_u)^\phi\}$,
where $u$ varies over representatives of the the distinguished
unipotent classes and $\phi\in\widehat{A(u)}_0$, is orthonormal in
$\overline R(W).$ Then (\ref{dist-packets}) follows at once.

\begin{lemma}
Let $G^\vee$ be simple. Suppose the unipotent element $u$ is distinguished and $s,s'\in\Gamma_u$. If $\phi(s)=\phi(s')$ for all
$\phi\in\widehat{A(u)}_0$, then $s'$ is conjugate to $sz$, for some $z\in Z(G^\vee).$
\end{lemma}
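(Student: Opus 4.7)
The plan is to argue case by case on the simple type of the simply connected complex group $G^\vee$, using the Bala--Carter classification of distinguished unipotent classes together with the explicit Springer correspondence. Write $\iota:Z(G^\vee)\to A(u)$ for the natural map, so that $\Gamma_u=A(u)/\iota(Z(G^\vee))$; the conclusion of the lemma is equivalent to the assertion that $s$ and $s'$ are conjugate in $\Gamma_u$. Since a conjugacy class in a finite group is determined by the values of all its irreducible characters, the lemma amounts to showing that the characters $\phi\in\widehat{A(u)}_0$ which are trivial on $\iota(Z(G^\vee))$ span a subspace of class functions on $\Gamma_u$ that separates its conjugacy classes. If $A(u)$ is trivial, the claim is vacuous; this covers type $A$ (only the regular class is distinguished) and many classes in the other types.

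For classical types $B_n$, $C_n$, $D_n$, the component group $A(u)$ of a distinguished unipotent is an elementary abelian $2$-group; in particular $\Gamma_u$ is abelian and conjugacy in $\Gamma_u$ is trivial, so the claim reduces to the \emph{separation of points} of $\Gamma_u$ by the characters in $\widehat{A(u)}_0$. Using the parametrization of distinguished unipotents by partitions with distinct even (resp.\ odd) parts, together with the explicit Springer correspondence for classical groups (Shoji, Lusztig), I would identify $\widehat{A(u)}_0$ as a specific subset of $\widehat{A(u)}$ and verify that its image in $\widehat{\Gamma_u}$ exhausts $\widehat{\Gamma_u}$. In the $C_n$ and $B_n$ cases this image is in fact all of $\widehat{A(u)}$; for $D_n$ one has to take into account the outer action and the somewhat more delicate image of the center.

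For the exceptional types, the list of distinguished unipotents with nontrivial $A(u)$ is short and entirely explicit, comprising (for example) $G_2(a_1)$ with $A(u)=S_3$, a few classes in $F_4$ with $A(u)\subseteq S_4$, a handful in $E_6$ and $E_7$, and $E_8(a_6), E_8(a_7)$ with $A(u)$ up to $S_5$. For each class I would read off $A(u)$, $\iota(Z(G^\vee))$, and $\widehat{A(u)}_0$ from the standard tables (Carter, Chapter~13, and Lusztig's tables), and verify by direct inspection with the character table of $A(u)$ that the characters in $\widehat{A(u)}_0$ separate the $A(u)$-conjugacy classes modulo $\iota(Z(G^\vee))$.

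The main obstacle is precisely the exceptional cases where $A(u)$ is a non-abelian symmetric group and $\widehat{A(u)}_0$ is a \emph{proper} subset of $\widehat{A(u)}$. The sharpest instance is $E_8(a_7)$ with $A(u)=S_5$ and $Z(G^\vee)$ trivial: here one must check that the proper subset of $\widehat{S_5}$ arising as Springer correspondents still separates the seven conjugacy classes of $S_5$. This reduces to a small but explicit finite computation with the character table of $S_5$ against Lusztig's description of the Springer image.
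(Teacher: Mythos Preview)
Your overall strategy---reduce to a case-by-case check over the Bala--Carter list and verify that the Springer-type characters separate conjugacy classes in $\Gamma_u$---is exactly what the paper does. Your treatment of type $A$ and of the exceptional non-abelian cases is fine; for $A(u)=S_k$ ($k=3,4,5$) the paper sharpens your ``direct inspection'' by observing that the unique missing character is $\sgn$, after which separation is immediate.

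The genuine gap is in your handling of types $B_n$ and $C_n$. Your assertion that in these cases $\widehat{A(u)}_0$ is all of $\widehat{A(u)}$ is false. Already for $G^\vee=Sp(12)$ and $u$ the distinguished class with partition $(2,4,6)$, one has $A(u)\cong(\bZ/2\bZ)^2$ with four characters, but only three symbols (hence three Springer-type characters). In general, for a distinguished unipotent in $Sp(2n)$ with $m=2t+1$ parts, $|A(u)|=2^{2t}$ while $|\widehat{A(u)}_0|=\binom{2t+1}{t}$, and these differ as soon as $t\ge 1$. So you cannot avoid an actual argument here.

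What the paper does instead: using Lusztig's symbol combinatorics, it observes that the ``trivial'' symbol has the shifted parts $a_j+(j-1)$ alternating between the two rows, and that swapping any two \emph{adjacent} entries between rows still yields a valid symbol. The corresponding $\Gamma_u$-characters are exactly those of the form
\[
\triv\boxtimes\cdots\boxtimes\triv\boxtimes\sgn\boxtimes\sgn\boxtimes\triv\boxtimes\cdots\boxtimes\triv,
\]
with the two $\sgn$'s in consecutive positions. These characters lie in $\widehat{A(u)}_0$ and, together with $\triv$, they separate the points of $A(u)=(\bZ/2\bZ)^{m}/(\bZ/2\bZ)_\triangle$. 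The same mechanism, with the obvious adjustment for the row-swap identification, handles type $D$. You would need to supply this (or an equivalent) combinatorial step to close the classical case.
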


\begin{proof}
Clearly the statement is equivalent with: if $s,s'\in A(u)$ such that
$\phi(s)=\phi(s')$ for all $\phi\in\widehat{A(u)}_0,$ then $s'$ is
conjugate to $s$ in $A(u).$ 

The nontrivial group $A(u)$ is one of $(\bZ/2\bZ)^k$, $S_3,$ $S_4,$
$S_5$. Suppose $A(u)=S_k$, $k=3,4,5$. The only $A(u)$-representation
which is not in $\widehat{A(u)}_0$ is $\sgn$. It is easy to check that
the other irreducible $A(u)$-representations separate the conjugacy
classes.

\smallskip

Now assume $A(u)=(\bZ/2\bZ)^k.$ If $G$ is exceptional, $k=0,1$ and
$\widehat{A(u)}=\widehat{A(u)}_0$, so the conclusion follows. 

\smallskip

Suppose $G$ is classical. In type $A$, we only have the regular
unipotent element $u$ for which $A(u)=\{1\}.$ 

\smallskip

If $G^\vee=Sp(2n)$, the
distinguished unipotent element $u$ is parameterized by partitions 
$(2a_1,2a_2,\dots,2a_m)$ of $2n$, with $a_1<a_2<\dots<a_m$,
$\Gamma_u=(\bZ/2\bZ)^m$ and 
$A(u)=\Gamma_u/(\bZ/2\bZ)_\triangle\cong(\bZ/2\bZ)^{m-1}.$ By allowing $a_1$ to be $0$, we may assume
that $m=2t+1$ is odd. The irreducible representations of $\Gamma_u$
(or equivalently $A(u)$) of
Springer type are in one-to-one correspondence with certain combinatorial objects, called symbols, introduced in \cite[\S11-13]{L7}, simplifying and generalizing earlier work of Shoji \cite{Sh}.
In our particular case, these symbols are of the form
\begin{equation}\label{S-symbol}
\left(\begin{matrix} b_1 & &b_2 && b_3 &&\dots &&b_{t+1}\\
&b_{t+2} &&b_{t+3}&&\dots &&b_{2t+1}\\
\end{matrix}\right),
\end{equation}
where $b_1<b_2<\dots<b_{t+1}$, $b_{t+2}<b_{t+3}<\dots<b_{2t+1}$, and 
$$\{b_1,b_2,\dots,b_{2t+1}\}=\{a_j+(j-1): j=1,\dots,2t\}.$$
The symbol
\begin{equation}\label{triv-A}
\left(\begin{matrix} a_1 & &a_3+2 && a_5+4 &&\dots &&a_{2t+1}+2t\\
&a_{2}+1 &&a_{4}+3&&\dots &&a_{2t}+2t-1\\
\end{matrix}\right)
\end{equation}
corresponds to the trivial $\Gamma_u$-representation. Now given a symbol (\ref{S-symbol}), the corresponding $\Gamma_u$-representation
has the trivial $\bZ/2\bZ$-representation in the $j$-th position if the element
$a_j+(j-1)$ occurs in the same row as in (\ref{triv-A}),
otherwise it has the $\sgn$ in the $j$-th position. Notice that in
particular all the $\Gamma_u$-representations of the form
\begin{equation}\label{A-separate}
\triv\boxtimes\dots\boxtimes\triv\boxtimes\sgn\boxtimes\sgn\boxtimes\triv\boxtimes\dots\boxtimes\triv
\end{equation}
appear, just by flipping two consecutive entries in the opposite rows
of (\ref{triv-A}). But these representations separate $A(u).$

\smallskip

When $G^\vee$ is an odd orthogonal group, the discussion is completely
analogous with the $Sp(2n)$ case, and we skip the details.

\smallskip

Suppose $G^\vee$ is an even orthogonal group. The distinguished unipotent
classes are parameterized by $(2a_1+1,2a_2+1,\dots,2a_{2t}+1)$, where
$0\le a_1<a_2<\dots<a_{2t}$, for which $A(u)=(\bZ/2\bZ)^{2t-2}.$

The S-symbol corresponding to the trivial $A(u)$-representation is
\begin{equation}\label{triv-A-typeD}
\left(\begin{matrix} a_1 & a_3+2 & a_5+4 &\dots &a_{2t-1}+2t-2\\
a_{2}+1 &a_{4}+3&a_6+5&\dots &a_{2t}+2t-1\\
\end{matrix}\right),
\end{equation}
and all other symbols are obtained by flipping entries between the
rows (as in type $C$). There is one difference: two symbols which
only differ by flipping the (full) rows are identified. To fix this, we
require that $a_1$ is always in the top row. Then the discussion is as
in type $C$, and the same representations (\ref{A-separate}) separate $A(u)$.
\end{proof}

In conclusion, we have the following remark.

\begin{remark}
Conjecture (\ref{e:conj-second}) implies that, when $G$ is a classical
group, the $q$-part of the formal
degree determines uniquely (up to $Z(G^\vee)$) the L-packet of
unipotent discrete series of $G(K)$, at least when the unipotent
element $u$ is distinguished. This statement is known to hold for
exceptional groups by \cite{Re3}.
\end{remark}

 \subsection{} Suppose $G$ is of type $G_2$ and $u$ is a
 representative of the subregular distinguished unipotent class,
 labelled $G_2(a_1)$. Then $\Gamma_u=A(u)=S_3.$ The sets $M(S_3)$ and
 $M(S_3)'$ have cardinalities $8$ and $4$, respectively. 

There are $3$ semisimple elements in $T^\vee_\iso$, which we denote by
$s_0=1$, $s_1$, and $s_2.$ The corresponding centralizers in
$G^\vee=G_2$ have types $G_2$, $A_1\times \wti A_1,$ and $A_2$,
respectively.

There are $8$ unipotent discrete series parameterized by $u$. They are
divided into three L-packets as in Table \ref{t:G2-packets}.

\begin{table}[h]
\caption{Unipotent parameters for $u=G_2(a_1)$, $\Gamma_u=S_3$, in $G_2$\label{t:G2-packets}}
\begin{tabular}{|c|c|c|c|c|}
\hline
$Z_{G^\vee}(s)$ &$A(s,u)$ &$ \phi\in\widehat{A(s,u)}$ &$(y,\rho)\in
M(\Gamma_u)$ &Notes \\
\hline
\hline
$G_2$ &$S_3$ &$1$ &$(1,1)$ &Iwahori, generic real c.c.\\
\hline
&&$\refl$ &$(1,r)$ &Iwahori, nongeneric real c.c.\\
\hline
&&$\sgn$ &$(1,\epsilon)$ &supercuspidal $G_2[1]$\\
\hline
\hline
$A_1+\wti A_1$ &$\bZ/2\bZ$ &$1$ &$(g_2,1)$ &Iwahori, endoscopic
$A_1\times\wti A_1$\\
\hline
&&$\sgn$ &$(g_2,\epsilon)$ &supercuspidal $G_2[-1]$\\
\hline 
\hline
$A_2$ &$\bZ/3\bZ$ &$1$ &$(g_3,1)$ &Iwahori, endoscopic $A_2$\\
\hline
&&$\zeta$ &$(g_3,\theta)$ &supercuspidal $G_2[\zeta]$\\
\hline
&&$\zeta^2$ &$(g_3,\theta^2)$ &supercuspidal $G_2[\zeta^2]$\\

\hline
\end{tabular}
\end{table}

Only the discrete series corresponding to the first two lines have
nonzero elliptic fake degrees and these are given by the entries in
Table \ref{t:G2}. The entries of the Fourier matrix $\{~,~\}$ are, for
example, in
\cite[page 457]{Ca}. Multiplying the relevant $8\times 2$ of the
matrix with the $2\times 1$ vector of nonzero elliptic fake degrees,
we obtain the vector of $8$ formal degrees from Table
\ref{t:G2-formal}, compare with \cite[section 7]{Re-Iw}.

\begin{table}[h]
\caption{Formal degrees, $u=G_2(a_1)$, $\Gamma_u=S_3$, in $G_2$\label{t:G2-formal}}
\begin{tabular}{|c|c|c|}
\hline
$Z_{G^\vee}(s)$ &$ \phi\in\widehat{A(s,u)}$ &Formal degree\\
\hline
\hline
$G_2$ &$1$ &$\frac 16 \frac{q(1-q)^2}{\Phi_2^2\Phi_3}$\\
\hline
&$\refl$ &$\frac 13 \frac{q(1-q)^2}{\Phi_2^2\Phi_3}$\\
\hline
&$\sgn$ &$\frac 16 \frac{q(1-q)^2}{\Phi_2^2\Phi_3}$\\
\hline
\hline
$A_1+\wti A_1$ &$1$ &$\frac 12 \frac{q(1-q)^2}{\Phi_2\Phi_6}$\\
\hline
&$\sgn$ &$\frac 12 \frac{q(1-q)^2}{\Phi_2\Phi_6}$\\
\hline 
\hline
$A_2$ &$1$ &$\frac 13 \frac{q(1-q)^2}{\Phi_3\Phi_6}$\\
\hline
&$\zeta$ &$\frac 13 \frac{q(1-q)^2}{\Phi_3\Phi_6}$\\
\hline
&$\zeta^2$ &$\frac 13 \frac{q(1-q)^2}{\Phi_3\Phi_6}$\\

\hline
\end{tabular}
\end{table}

\begin{remark}
In the same way, using the explicit tables of elliptic fake degrees
from section \ref{sec:ell}, we verified Conjecture \ref{conj-main} for
all exceptional simple $p$-adic groups, when the unipotent element $u$
is distinguished, via a 
comparison with the known expressions for formal degrees from
\cite{Re3}. 
\end{remark}

\subsection{}\label{sec:sp4} We offer another interesting example. Suppose
$G^\vee=Sp(4,\bC)$. Let $s\in G^\vee$ be a semisimple element such
that $Z_{G^\vee}(s)=SL(2,\bC)\times SL(2,\bC)$, and $u$ be a regular
unipotent element in $Z_{G^\vee}(s).$ Then $Z_{G^\vee}(u)\cong
O(2,\bC)$ and $\Gamma_u=\bZ/2\bZ.$ The sets $M(\bZ/2\bZ)$ and
$M(\bZ/2\bZ)'$ have cardinalities $4$ and $2$, respectively. If we
denote by $\tau$ the nontrivial element in $\Gamma_u$, the Fourier transform matrix
is:
\begin{equation}\label{FT:Z2}
\begin{tabular}{|c|c|c|c|c|}
\hline
$\bZ/2\bZ$&$(1,1)$ &$(1,\epsilon)$ &$(\tau,1)$ &$(\tau,\epsilon)$\\
\hline
$(1,1)$ &$1/2$ &$1/2$ &$1/2$ &$1/2$\\
\hline
$(1,\epsilon)$ &$1/2$ &$1/2$ &$-1/2$ &$-1/2$\\
\hline
$(\tau,1)$ &$1/2$ &$-1/2$ &$1/2$ &$-1/2$\\
\hline
$(\tau,\epsilon)$ &$1/2$ &$-1/2$ &$-1/2$ &$1/2$\\
\hline
\end{tabular}
\end{equation}
There are four elliptic unipotent representations attached to $u$, as
listed in Table \ref{t:C2-packets}. There are two discrete series
representations, one of which is supercuspidal, 
attached to $(\tau,1),(\tau,\epsilon)\in M(\bZ/2\bZ)$ in the table.
\begin{table}[h]
\caption{Unipotent elliptic parameters for $u=(2,2)$, $\Gamma_u=\bZ/2\bZ$, in $Sp(4,\bC)$\label{t:C2-packets}}
\begin{tabular}{|c|c|c|c|c|}
\hline
$Z_{G^\vee}(s)$ &$A(s,u)$ &$ \phi\in\widehat{A(s,u)}$ &$(y,\rho)\in
M(\Gamma_u)$ &Notes \\
\hline
\hline
$Sp(4,\bC)$ &$\bZ/2\bZ$ &$1$ &$(1,1)$ &Iwahori, generic real c.c.\\
\hline
&&$\sgn$ &$(1,\epsilon)$ &Iwahori, nongeneric real c.c.\\
\hline
\hline
$SL(2)\times SL(2)$ &$\bZ/2\bZ$ &$1$ &$(\tau,1)$ &Iwahori, endoscopic
$SL(2)\times SL(2)$\\
\hline
&&$\sgn$ &$(\tau,\epsilon)$ &supercuspidal\\
\hline
\end{tabular}
\end{table}
The two nonzero elliptic fake degrees are
\begin{equation}
F^e_{[\lim \pi^I_{u,1,1}]}=\frac {q(1-q)^2}{(1+q^2)(1+q)^2} \text{ and } F^e_{[\lim \pi^I_{u,1,\epsilon}]}=- \frac {q(1-q)^2}{(1+q^2)(1+q)^2}. 
\end{equation}
Then (\ref{e:conj-main}) gives 
\begin{equation}
\hat\mu(\pi_{u,1,1}) =\hat\mu(\pi_{u,1,\epsilon})=0 \text{ and }
\hat\mu(\pi_{u,\tau,1})= \hat\mu(\pi_{u,\tau,\epsilon})=\frac 12 \frac{q(1-q)^2}{(1+q^2)(1+q)^2}.
\end{equation}
This is consistent: $\mu(\pi_{u,1,1})$ and $\mu(\pi_{u,\tau,1})$ are not
discrete series representations, so the Plancherel measure should be zero, while the
results for
formal degrees for the two remaining discrete series can be checked
against \cite[Proposition 7.3]{Re-Iw}.

\section{Formal degrees in the Iwahori case}\label{sec:Iwahori}

In this section, we devise an approach towards verifying Conjecture
\ref{conj-main} directly, i.e., without refering to Theorem \ref{t:formal-degree}, at least for the discrete series with Iwahori fixed
vectors, since in that case there is no difference between the formal
degrees for the discrete series of the group and the ones for the
Iwahori-Hecke algebra, see Example \ref{ex:Iwahori}.

\subsection{} Recall the generic affine Hecke algebra from Definition
\ref{d:Hecke-generic}, and its specialization $\C H(\C R,m).$ Now specialize further $\mathbf q$ to $q>1.$ The algebra $\CH=\C H(\C R,m)$ has a structure of normalized
Hilbert algebra studied in \cite{O}. Let $*:\CH\to\CH$ be the conjugate
linear anti-involution defined on the basis by $$N_w^*=N_{w^{-1}},$$ and
let $\tau:\CH\to\bC$ be the trace
$$\tau(N_w)=\delta_{w,1}.$$
The pairing $(x,y)=\tau(x^*y)$ is a positive definite hermitian form
on $\CH$ such that the basis $\{N_w:w\in W^e\}$ is orthonormal with
respect to it. Let $C^*_r(\C H)$ be the reduced $C^*$-algebra
completion of $\C H.$ By \cite[Theorem 2.25]{O}, the abstract
Plancherel formula holds, i.e., there exists a unique positive Borel measure
$\hat\mu$ (the Plancherel measure of $\C H$) on $\widehat{C^*_r(\CH)}$ such that 
\begin{equation}\label{e:Plancherel}
\tau(x)=\int_{\widehat{C^*_r(\CH)}} \tr \pi(x) d\hat\mu(\pi),\quad x\in\CH.
\end{equation}

\begin{definition}
A simple $\C H$-module $\pi$ is called a tempered module if $\pi$ can be extended to a $C^*_r(\C H)$-module, i.e., if $\pi$ occurs in the support of the Plancherel measure $\hat\mu$ (\ref{e:Plancherel}). 

It is called a discrete series module
if $\hat\mu(\{\pi\})>0.$ The scalar $\hat\mu_\pi:=\hat\mu(\{\pi\})$ is
called the formal degree of the discrete series $\pi.$
\end{definition}

\subsection{} We present from \cite{OS1} the homological formula for
computing $\hat\mu_\pi$. Assume from now on that the root datum $\C R$
is semisimple, i.e., that $\Omega$ is finite, or else there are no
discrete series representations.

Let $E=X\otimes_\bZ \bR$, and let $A_\emptyset$ be the fundamental
alcove for the action of $W^e$ on $E$. The action of $\Omega$
preserves $A_\emptyset$. The facets of $A_\emptyset$ are in one-to-one
correspondence with subsets $J\subset F^a$:
$$A_J:=\{v\in E: \langle v, \mathbf a\rangle=0,\forall\mathbf a\in J,\
\langle v,\mathbf a'\rangle>0,\forall \mathbf a'\in F^a\setminus J\}.
$$ 
Let $\Omega_J$ denote the stabilizer in $\Omega$ of $A_J$ and $W_J$ be
the subgroup of $W^a$ generated by $\{s_{\mathbf a}:\mathbf a\in J\}.$
Define
\begin{equation}
\C H(\C R,J,m)=\C H(W_J,m)\rtimes\Omega_J.
\end{equation}
Here, $\C H(W_J,m)$ is the finite Hecke algebra generated by $W_J$ and
with parameters obtained from the restriction of $m$ to $J.$ By
\cite[Lemma 1.4]{OS1}, the algebra $\C H(\C R,J,m)$ is semisimple for
all $J$. For every irreducible $\C H(\C R,J,m)$-module $\sigma$, let
$e_\sigma\in \C H(\C R,J,m)$ denote the corresponding primitive
central idempotent, and let $\dim\sigma$ be the dimension of
$\sigma.$ Let $\overline S^a$ denote a set of representatives for the
orbits of $\Omega$ on $S^a.$ Finally, let $\ep_J$ denote the
orientation character of $\Omega_J$, i.e., the determinant of the
linear action of $\Omega_J$ on $E/\text{span}(A_J).$

\begin{definition}[{\cite[(3.19)]{OS1}}]
The Euler-Poincar\'e function $f_\EP^\pi$ associated to an irreducible
$\C H(\C R,m)$-module $(\pi,V)$ is 
\begin{equation}
f_\EP^\pi=\sum_{J\in \overline S^a}
(-1)^{|J|}\sum_{\sigma\in\Irr\CH(\CR,J,m)}\frac {[\pi\otimes\ep_J:\sigma]}{\dim\sigma} e_\sigma,
\end{equation}
where $[\pi\otimes\ep_J:\sigma]$ denotes the multiplicity of $\sigma$ in $\pi|_{\C H(\C R,J,m)}\otimes\ep_J.$
\end{definition}

The functions $f_\EP^\pi$ play an essential role in the elliptic representation theory of $\C H$ as we recall next.

\subsection{} Let $\CH$-mod denote the category of finite dimensional $\CH$-modules, $\widehat\CH$ the simple objects in $\CH$-mod, $\widehat\CH_\ds$ the set of simple discrete series modules. Let $\C R(\C H)$ denote the Grothendieck group of finite dimensional $\C H$-modules. The Euler-Poincar\' e pairing on $\C R(\C H)$ is
\begin{equation}\label{e:EP}
\langle~,~\rangle^\EP_\CH: \C R(\CH)\times \C R(\CH),\ \langle\pi,\pi'\rangle^\EP_\CH=\sum_{i\ge 0}(-1)^i\dim\Ext_\CH^i(\pi,\pi'),\ \pi,\pi'\in\C H\text{-mod}.
\end{equation}
This is well-defined, since $\C H$-mod has finite cohomological dimension \cite[Proposition 2.4]{OS1}.

\begin{theorem}[{\cite[Proposition 3.6, Theorem
    3.8]{OS1}}]\label{t:EP}\ 
\begin{enumerate}
\item If $\pi,\pi'\in\CH$-mod, then $\langle\pi,\pi'\rangle^\EP_\CH=\tr\pi(f_\EP^{\pi'}).$
\item Suppose $\pi'$ is a simple tempered $\CH$-module and $\pi\in\widehat\CH_\ds.$ Then
\begin{equation}
\Ext_\CH^i(\pi,\pi')=\begin{cases}\bC,&\text{if }\pi\cong\pi'\text{ and }i=0,\\ 0,&\text{otherwise.}\end{cases}
\end{equation}
\end{enumerate}
\end{theorem}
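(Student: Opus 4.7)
I would prove the two parts separately, using a parahoric resolution for (1) and the isolation of discrete series in the Plancherel spectrum for (2).

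\textbf{Part (1).} The strategy is to build an explicit bounded projective resolution of every finite-dimensional $\CH$-module $\pi'$ whose Euler characteristic under $\Hom_\CH(\pi,-)$ produces $\tr\pi(f_\EP^{\pi'})$ by direct inspection. Concretely, I would consider the complex $P^{-\bullet}(\pi')$ whose $k$-th term is
\begin{equation}
P^{-k}(\pi')=\bigoplus_{\substack{J\in\overline S^a\\|J|=k}}\CH\otimes_{\CH(\C R,J,m)}\bigl(\pi'|_{\CH(\C R,J,m)}\otimes\epsilon_J\bigr),
\end{equation}
with differentials coming from the simplicial face maps of the fundamental alcove $A_\emptyset$, twisted by the orientation character $\epsilon_J$ to account for the action of $\Omega_J$ on the normal directions. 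This is the Hecke-algebraic analog of the Schneider--Stuhler / Meyer--Solleveld resolution, the Coxeter complex of $A_\emptyset$ playing the role of the Bruhat--Tits building. Exactness reduces to the acyclicity of the affine Coxeter complex, while projectivity follows from semisimplicity of each $\CH(\C R,J,m)$ (\cite[Lemma 1.4]{OS1}) together with the exactness of $\CH\otimes_{\CH(\C R,J,m)}(-)$ on a semisimple source. The resolution has length $l=\dim E$, matching the finite cohomological dimension of $\CH$-mod (\cite[Proposition 2.4]{OS1}). Applying $\Hom_\CH(\pi,-)$ and using Frobenius reciprocity rewrites the complex in terms of branching multiplicities $[\pi\otimes\epsilon_J:\sigma]$, and the alternating sum collapses to $\tr\pi(f_\EP^{\pi'})$ after recognizing that $e_\sigma$ acts as $\dim\sigma$ times the projector onto the $\sigma$-isotypic piece. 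The main obstacle here is verifying that the combinatorial differential really squares to zero and that the complex is exact in positive degrees; this requires a careful orientation bookkeeping involving the $\Omega$-orbit structure on $S^a$ and the stabilizers $\Omega_J$.

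\textbf{Part (2).} For $i=0$, Schur's lemma gives $\Hom_\CH(\pi,\pi')=\bC\cdot\delta_{\pi\cong\pi'}$ since both are simple. For $i>0$, the plan is to exploit that discrete series are isolated in $\widehat{C^*_r(\CH)}$: the condition $\hat\mu(\{\pi\})>0$ yields a $C^*_r(\CH)$-central projector $e_\pi$ onto the $\pi$-isotypic part of any tempered module. Given a Yoneda extension $0\to\pi'\to E_1\to\cdots\to E_i\to\pi\to 0$ with $\pi'$ tempered, I would apply $e_\pi$ after passing to the tempered envelope of the complex; isolation then splits the sequence off at the right-hand end and collapses the extension class to a homomorphism $\pi\to\pi'$, which vanishes unless $\pi\cong\pi'$ and $i=0$. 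The technical step is to show that Yoneda Ext in $\CH$-mod agrees, when one argument is a tempered discrete series, with Ext in the tempered subcategory; this I would handle using part (1) to write $\langle\pi,\pi'\rangle^\EP_\CH=\tr\pi(f_\EP^{\pi'})$ and then arguing that $f_\EP^{\pi}$ acts as a multiple of the projector onto $\pi$ in tempered modules, forcing the alternating sum of the Ext-dimensions to concentrate in degree zero.

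\textbf{Main obstacle.} The most delicate point is part (2): promoting the $C^*$-theoretic isolation of $\pi$ to an \emph{algebraic} Ext-vanishing statement. The clean way is via a projectivity/injectivity principle for discrete series in the tempered category, combined with the finiteness of the cohomological dimension of $\CH$; both inputs are nontrivial and rely on the Plancherel formula (\ref{e:Plancherel}) together with the resolution built in part (1).
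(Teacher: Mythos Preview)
The paper does not supply its own proof of this theorem; it is quoted verbatim from \cite[Proposition 3.6, Theorem 3.8]{OS1}. So the comparison is against the argument in \cite{OS1}.

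Your plan for part (1) is essentially the argument in \cite{OS1}: the resolution $P^{-\bullet}(\pi')$ you describe is exactly the parahoric (Schneider--Stuhler type) resolution built there, and the computation of the Euler--Poincar\'e pairing via Frobenius reciprocity and the idempotents $e_\sigma$ is the intended one.

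Part (2), however, has a real gap. Your two proposed mechanisms do not establish the individual vanishing of $\Ext^i$ for $i>0$:
\begin{itemize}
\item The projector $e_\pi\in C^*_r(\CH)$ does not act on the intermediate terms $E_1,\dots,E_i$ of an arbitrary Yoneda extension in $\CH$-mod, since those modules are typically not tempered; ``passing to the tempered envelope'' is not a functor that preserves exactness of such sequences, so the splitting argument does not go through at the $C^*$-level.
\item The fallback via part (1) only computes the alternating sum $\sum_i(-1)^i\dim\Ext^i_{\CH}(\pi,\pi')$. Showing this equals $\delta_{\pi\cong\pi'}$ does not force the higher Ext groups to vanish; you would need a separate concentration argument, and none is supplied.
\end{itemize}
The proof in \cite{OS1} avoids both problems by working with the Schwartz completion $\mathcal S(\CH)$ rather than $C^*_r(\CH)$. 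The key nontrivial input (see also \cite{DO}) is that $\mathcal S(\CH)$ is \emph{flat} over $\CH$, so $\Ext^i_{\CH}(\pi,\pi')\cong\Ext^i_{\mathcal S(\CH)}(\pi,\pi')$ for tempered $\pi,\pi'$. In $\mathcal S(\CH)$-mod a discrete series module is a direct summand (the isolated point in the tempered spectrum produces an \emph{algebraic} central idempotent of $\mathcal S(\CH)$), hence projective and injective there, which gives the vanishing of all higher Ext groups at once. Your outline is missing precisely this flatness/Schwartz step; without it the $C^*$-isolation of $\pi$ cannot be promoted to an algebraic Ext statement.
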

Using Theorem \ref{t:EP} and plugging in $f^\pi_\EP$ in (\ref{e:Plancherel}), one immediately obtains an explicit formula for the formal degree of $\pi\in\widehat\CH_\ds$, cf. \cite[Theorem 4.3]{OS2}. To state it, let $\hat\mu^f_\sigma$ be the formal degree of the simple module $\sigma$ for the finite (semisimple) algebra $ \CH(\CR,J,m)$. Then:
\begin{equation}\label{formal-homological}
\hat\mu_\pi=\tau(f^\pi_\EP)=\sum_{J\in \overline S^a}
(-1)^{|J|}\sum_{\sigma\in\Irr\CH(\CR,J,m)}{[\pi\otimes\ep_J:\sigma]}\frac{\hat\mu^f_\sigma}{P_J(q,m)},
\end{equation}
where $P_J(q,m)$ is the Poincar\'e polynomial of $\CH(\CR,J,m).$

\subsection{} From now on, assume we are in the setting of an
Iwahori-Hecke algebra of a split group. For simplicity, we also assume
that $W^e=W^a,$ i.e., the $p$-adic group is simply connected. In our notation, this is $\C
H=\C H(\C R,1).$ As it is well-known, the simple modules of the finite
Hecke algebras $\CH(\CR,J,1)$ are in one-to-one correspondence with
irreducible representations of the finite Weyl group $W_J.$ If
$\delta\in \widehat W_J$, write $d_\delta(q)$ for the generic degree
of $\delta$ (this corresponds to $\hat\mu^f_\sigma$ in
(\ref{formal-homological}). 

As in section \ref{sec:4.5}, write $\lim\pi\in R(W^a)$ for the ``limit''
$\mathbf q\to 1$ of the $\C H$-module $\pi.$

We may then rewrite
(\ref{formal-homological}) as:
\begin{equation}\label{formal-homological-2}
\hat\mu_\pi=\sum_{J\subset S^a}
(-1)^{|J|}\sum_{\delta\in\widehat W_J}{[\lim\pi:\delta]}\frac{d_\delta(q)}{P_J(q)}.
\end{equation}

We need a lemma first. 

\begin{lemma}\label{l:elliptic-affine}
Suppose $C$ is an elliptic conjugacy class in $W^a$. Then there exists one and only one maximal $J\subsetneq S^a$ such that $C\cap W_J\neq\emptyset,$ and in this case $C\cap W_J$ forms a single elliptic $W_J$-conjugacy class.
\end{lemma}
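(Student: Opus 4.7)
The strategy rests on the geometric reformulation that $w \in W^a$ is elliptic precisely when its action on $E$ has an isolated fixed point $v_w$, which is then unique and satisfies $v_{gwg^{-1}} = g \cdot v_w$ for all $g \in W^a$. So the conjugacy class $C$ selects a single $W^a$-orbit on $E$, namely $W^a \cdot v_w$. Since $\overline{A_\emptyset}$ is a fundamental domain for the action of $W^a$ on $E$, this orbit meets $\overline{A_\emptyset}$ in a unique point $v$. Writing $v \in A_J$ for the unique subset $J \subseteq F^a$ indexing the walls through $v$, standard Coxeter theory identifies the $W^a$-stabilizer of $v$ with $W_J$, so after conjugating $C$ we may assume $w \in W_J$; this proves existence of $J$ with $C \cap W_J \neq \emptyset$.

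Next I would prove maximality and uniqueness of $J$. The finite group $W_J$ fixes $v$ and has fixed subspace in $E$ of codimension exactly $|J|$; this uses that any proper subset of $F^a$ is linearly independent, reflecting the single minimal vector relation among the simple affine roots. For $w \in W_J$ to have $v$ as its \emph{only} affine fixed point, this fixed subspace must reduce to $\{v\}$, forcing $|J| = l = |S^a| - 1$, so $v$ is a vertex of $\overline{A_\emptyset}$. Uniqueness of $J$ then follows at once: the vertex $v$ is intrinsic to $C$ (as the unique point of $W^a \cdot v_w$ lying in $\overline{A_\emptyset}$), and distinct vertices of $\overline{A_\emptyset}$ correspond to distinct maximal proper subsets of $F^a$.

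For the final assertion, suppose $w_1, w_2 \in C \cap W_J$ with $w_1 = g w_2 g^{-1}$ for some $g \in W^a$. Both fix $v$ as their only affine fixed point, and equivariance of the fixed-point assignment gives $g \cdot v = v_{w_1} = v$, so $g \in W_J$ and hence $w_1, w_2$ are $W_J$-conjugate. Ellipticity of $w_1$ inside $W_J$ is then immediate: $W_J$ acts on the tangent space at $v$ via its reflection representation, whose rank equals $l = \dim E$ by the codimension computation above, and $1 - w_1$ being invertible on $E$ is exactly the same condition as on this tangent space. The main technical input is the codimension claim in the second paragraph, which reduces to the well-known fact that only the full set $F^a$ of simple affine roots is linearly dependent.
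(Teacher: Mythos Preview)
Your proof is correct and follows essentially the same approach as the paper's own argument: both hinge on the unique affine fixed point of an elliptic element, conjugate it into $\overline{A_\emptyset}$, identify the stabilizer as $W_J$, and use uniqueness of the fixed point to force $v$ to be a vertex and to deduce that $C\cap W_J$ is a single $W_J$-class. Your write-up is somewhat more explicit than the paper's---you spell out the codimension argument for maximality of $J$, the uniqueness of $J$ via the fundamental-domain property, and the ellipticity of $w_1$ inside $W_J$---but these are elaborations of the same underlying proof rather than a different route.
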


\begin{proof}
Let $t_xw\in W^a$ be an element of an elliptic class $C$. Suppose $e\in E$ is a fixed point for
$t_x w$, i.e., $(t_x w)(e)=e$, which is equivalent with
$x=(1-w)e$. Since $w\in W$ is necessarily elliptic, $1-w$ is
invertible, and therefore $e=(1-w)^{-1}x$ is a unique fixed
point. Conjugating $t_xw$ if necessary, we may assume that $e$ is in the closure of the
fundamental alcove, and therefore $t_wx\in Z_{W^a}(e)$, a parahoric
subgroup. Since $w\in W$ is elliptic, it is necessary that
$Z_{W^a}(e)=W_J$, for $J$ maximal, in other words, that $e$ is a
vertex of the fundamental alcove. The fact that $C\cap W_J$ is a
unique $W_J$-conjugacy class follows from the uniqueness of the fixed
point: if $(t_xw)e=e$ and $g(t_x w)g^{-1} e=e$ for some $g\in W^a$,
then $g^{-1}e$ is also a fixed point of $t_x w$, and therefore
$g^{-1}e=e,$ so $g\in W_J.$
\end{proof}

In light of Lemma \ref{l:elliptic-affine}, if $C_J$ is an elliptic
conjugacy class of $W_J$, where $J\subsetneq S^a$ is maximal, then we
may denote the unique elliptic conjugacy class in $W^a$ that meets
$C_J$ by $C_J^a.$

\begin{definition}
Define the class function $\nu: W^a\to
\bQ(q),$ by
\begin{equation}
\nu(C)=\begin{cases}0,&\text{if }C\text{ is not elliptic},\\
(-1)^l\displaystyle{\sum_{\delta\in\widehat W_J}\delta(C\cap
W_J)\frac{d_\delta(q)}{P_J(q)}},&\text{if }C=C_J^a \text{ is elliptic}.
\end{cases}
\end{equation}
\end{definition}

\begin{proposition}\label{p:new-formal}
For every discrete series $\pi\in\widehat{\C H}_\ds$, the formal
degree equals
\begin{equation}
\hat\mu_\pi=\langle\lim\pi,\nu\rangle^\el_{W^a},
\end{equation}
where $\langle~,~\rangle^\el_{W^a}$ is the affine elliptic pairing
from (\ref{affine-ell-pair}).
\end{proposition}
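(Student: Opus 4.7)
The plan is to unfold both sides of the proposed equality into explicit double sums and match contributions conjugacy class by conjugacy class.

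Starting from the homological formula (\ref{formal-homological-2}), apply Frobenius reciprocity $[\lim\pi:\delta]_{W_J}=\frac{1}{|W_J|}\sum_{w\in W_J}\chi_{\lim\pi}(w)\delta(w)$ (Weyl group characters being real-valued), and interchange the order of summation to obtain
\begin{equation*}
\hat\mu_\pi=\sum_{J\subsetneq S^a}\frac{(-1)^{|J|}}{|W_J|}\sum_{w\in W_J}\chi_{\lim\pi}(w)M_J(w,q),\qquad M_J(w,q):=\sum_{\delta\in\widehat{W_J}}\delta(w)\frac{d_\delta(q)}{P_J(q)}.
\end{equation*}

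Next, unfold $\langle\chi_{\lim\pi},\nu\rangle^\el_{W^a}=\sum_{C\text{ elliptic}}\mu_\el(C)\chi_{\lim\pi}(C)\nu(C)$ using the definitions. For an elliptic $W^a$-conjugacy class $C$, the isolated fixed point of any representative is a vertex of the fundamental alcove whose isotropy group is $W_{J(C)}$ for the unique maximal $J(C)\subsetneq S^a$ furnished by Lemma \ref{l:elliptic-affine}; hence $\mu_\el(C)=|C\cap W_{J(C)}|/|W_{J(C)}|$. Since $C\cap W_{J(C)}$ forms a single $W_{J(C)}$-conjugacy class, substituting the definition of $\nu(C)$ produces
\begin{equation*}
\langle\chi_{\lim\pi},\nu\rangle^\el_{W^a}=(-1)^l\sum_{J\text{ maximal in }S^a}\frac{1}{|W_J|}\sum_{\substack{w\in W_J\\ w\text{ elliptic in }W^a}}\chi_{\lim\pi}(w)M_J(w,q).
\end{equation*}
Since $|J|=l$ for maximal $J$, this is exactly the restriction of the previous expansion of $\hat\mu_\pi$ to the elliptic locus.

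The proof therefore reduces to showing that the non-elliptic contributions to $\hat\mu_\pi$ vanish. Every element of $W_J$ with $J$ non-maximal pointwise fixes $\cap_{\mathbf a\in J}H_{\mathbf a}$ and so is automatically non-elliptic in $W^a$; grouping the surviving terms by non-elliptic $W^a$-conjugacy class $C$, it suffices to verify
\begin{equation*}
\sum_{J\subsetneq S^a}\frac{(-1)^{|J|}}{|W_J|}\sum_{w\in C\cap W_J}M_J(w,q)=0\qquad\text{for each non-elliptic class }C.
\end{equation*}
This combinatorial-Hecke identity is the main obstacle. For a fixed non-elliptic representative $w$ with positive-dimensional fixed subspace $E^w\subset E$, the $J$'s meeting $C$ correspond to facets of the fundamental alcove whose closure meets a $W^a$-translate of $E^w$, and these form a contractible subposet of the affine Coxeter complex; an Euler-characteristic / M\"obius-inversion argument is expected to yield the cancellation. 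The difficulty is that $M_J(w,q)$ depends genuinely on $J$ through the finite Hecke algebra $\C H(W_J,q)$, so the vanishing must be compatible with the way generic degrees transform under parabolic induction (a Curtis-type reciprocity linking $M_{J'}(w,q)$ and $M_J(w,q)$ for elements common to $W_{J'}\subset W_J$).
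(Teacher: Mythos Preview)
Your setup is correct through the third displayed equation: you have reduced the proposition to the vanishing
\[
\sum_{J\subsetneq S^a}\frac{(-1)^{|J|}}{|W_J|}\sum_{w\in C\cap W_J}M_J(w,q)=0\qquad\text{for each non-elliptic }W^a\text{-class }C,
\]
and this is indeed the heart of the matter. But you do not prove it. You describe it as ``the main obstacle,'' gesture at a contractibility/M\"obius argument, and then flag exactly why that argument is hard: $M_J(w,q)$ depends on $J$ through the generic degrees of $\CH(W_J,q)$, so a purely combinatorial Euler-characteristic cancellation does not apply without some further Hecke-algebra input that you have not supplied. As written, the proof is incomplete at precisely the point that carries all the content.

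The paper sidesteps this combinatorial difficulty entirely by a representation-theoretic trick. Observe that the functional
\[
\chi\longmapsto \sum_{J\subsetneq S^a}(-1)^{|J|}\sum_{\delta\in\widehat W_J}\langle\chi|_{W_J},\delta\rangle_{W_J}\frac{d_\delta(q)}{P_J(q)}
\]
is, by (\ref{formal-homological-2}), nothing other than $\hat\mu_\bullet$ evaluated on the virtual $\CH$-module with limit character $\chi$. Now use two facts: (i) $\hat\mu_{\pi'}=0$ for every properly parabolically induced virtual module $\pi'\in R_{\ind}(\CH)$; (ii) the characters $\{\lim\pi':\pi'\in R_{\ind}(\CH)\}$ vanish on all elliptic $W^a$-classes and separate all non-elliptic classes (equivalently, all $W_J$-classes for $J$ non-maximal). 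Hence one may choose a virtual induced $\pi'$ whose limit character agrees with $\lim\pi$ on every non-elliptic class and vanishes on every elliptic class. Applying the functional to $\pi'$ gives $0$ on the one hand, and on the other hand exactly the non-elliptic part of your expansion of $\hat\mu_\pi$. This is the missing idea: rather than proving the cancellation identity for $M_J$ directly, you deduce it from the vanishing of formal degrees on induced modules.
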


\begin{proof}
We have
\begin{equation*}
\begin{aligned}
\langle\lim\pi,\nu\rangle^\el_{W^a}&=\sum_{C\text{ elliptic in }W^a}
\lim\pi(C)\nu(C)\mu_\el(C)\ \text{(by the definition of the elliptic
  pairing)}\\
&=\sum_{\substack{J\subsetneq S^a\\ \text{ maximal}}}\sum_{\substack{C_J\subset
  W_J\\ \text{elliptic}}}\lim\pi(C_J^a)\nu(C_J^a)\frac{|C_J|}{|W_J|}\ \text{(by Lemma
  \ref{l:elliptic-affine} and definition of $\mu_\el$)}\\
&=(-1)^l\sum_{\substack{J\subsetneq S^a\\ \text{ maximal}}}\sum_{\substack{C_J\subset
  W_J\text{ elliptic}\\ \delta\in\widehat
  W_J}}\lim\pi(C_J)\delta(C_J)\frac{|C_J|}{|W_J|}\frac
{d_\delta(q)}{P_J(q)}\\
&=\sum_{\substack{J\subsetneq S^a}}(-1)^{|J|}\sum_{\substack{C_J\subset
  W_J\\ \delta\in\widehat
  W_J}}\lim\pi(C_J)\delta(C_J)\frac{|C_J|}{|W_J|}\frac
{d_\delta(q)}{P_J(q)}=\hat\mu_\pi \text{ (by
  (\ref{formal-homological-2})). }
\end{aligned}
\end{equation*}
The second to last step is justified as follows. Firstly, the set
$\{\lim\pi': \pi'\in R_{\ind}(\CH)\}$ separates all conjugacy classes
in all $W_J$'s with $J\subsetneq S^a$ not maximal. Secondly,
$\lim\pi'(C)=0$ for all $\pi'\in R_{\ind}(\CH)$ and all $C\subset W^a$
elliptic. Therefore, we can choose a
virtual properly induced module $\pi'$ such that 
\begin{align*}
\sum_{\substack{J\subsetneq S^a}}\sum_{\substack{C_J\subset
  W_J\\ \delta\in\widehat
  W_J}}\lim\pi'(C_J)\delta(C_J)\frac{|C_J|}{|W_J|}\frac
{d_\delta(q)}{P_J(q)}=&-\sum_{\substack{J\subsetneq S^a}}\sum_{\substack{C_J\subset
  W_J\\ \delta\in\widehat
  W_J}}\lim\pi(C_J)\delta(C_J)\frac{|C_J|}{|W_J|}\frac
{d_\delta(q)}{P_J(q)}\\
&+\sum_{C\text{ elliptic in }W^a}
\lim\pi(C)\nu(C)\mu_\el(C).
\end{align*}
Notice that the left hand side of the above equality is exactly
$\hat\mu_{\pi'}$, and this equals $0$, since $\pi'\in R_{\ind}(\CH).$
\end{proof}

\subsection{} We use the relation between formal degrees and fake degrees for
representations of finite Weyl groups from section
\ref{sec:fourier}. 

Let $\C W$ be a finite Weyl group. Using the exotic Fourier transform, we define a pairing
\begin{equation}\label{exotic-pair-Groth}
\{~,~\}: R(\C W)_\bC\times R(\C W)_\bC\to \bC,
\end{equation}
as follows.
 If $\delta,\delta'\in\widehat {\C W}$ are given, set
$\{\delta,\delta'\}=0$ if they are not in the same
family. Otherwise, let $\{\delta,\delta'\}$ be the entry in the
exotic Fourier transform $\{~,~\}$ for the pair of elements in
$M(\Gamma)'$ that parameterize $\delta$ and $\delta'$. Extend $\{~,~\}$ bilinearly to a pairing on $R(\C W)_\bC$. 

In particular, this definition applies to $\C W= W_J$, $J\subsetneq S^a$, in which case we denote the pairing by $\{~,~\}^J$ to emphasize the dependence on $J$.

 By
(\ref{formal-finite}), we have 
\begin{equation}\label{delta/P}
\begin{aligned}
\frac{d_\delta(q)}{P_J(q)}&=\sum_{\delta'\in\widehat
  W_J}\{\delta,\delta'\}^J
\frac{f_{\delta'}(q)}{P_J(q)}=(1-q)^l\sum_{\delta'\in\widehat W_J}
\{\delta,\delta'\}^J~\left\langle\delta',\frac
  1{\det(1-q\cdot~)}\right\rangle_{W_J}\\
&=(1-q)^l\sum_{C_J'\subset W_J}\sum_{\delta'\in\widehat W_J}
\{\delta,\delta'\}^J~ \delta'(C_J')~\frac
  1{\det(1-qC_J')}\frac{|C_J'|}{|W_J|};
\end{aligned}
\end{equation}
here $C_J'$ ranges over the conjugacy classes in $W_J.$

\subsection{} Now suppose again that $\C W$ is an arbitrary finite Weyl group. Define the linear
map 
\begin{equation}
\EF: R(\C W)_\bC\to R(\C W)_\bC,\quad \EF(\chi)=\sum_{\delta'\in \widehat{\C W}} \{\chi,\delta'\}\delta'.
\end{equation}

We expect that $\EF$ takes induced characters to induced characters, i.e.,
 \begin{equation}\label{ell-ind}
\EF(R_{\ind}(\C W))\subset R_{\ind}(\C W).
\end{equation}
 A stronger conjecture is
  that the Fourier transform commutes with parabolic induction
  $\ind_L$, where $\C W_L$ is a proper parabolic subgroup of $\C W$, i.e.,
\begin{equation}\label{Fourier-ind}
\ind_L(\EF^L(\delta))=\EF(\ind_L(\delta)),\ \delta\in R(\C W_L).
\end{equation}
For example, if $\delta$ is a left cell representation of $\C W_L$, then
by \cite[Theorem 12.2]{L1}, $\EF^L(\delta)=\delta$. Moreover, by
\cite[Proposition 3.15]{BV}, $\ind_L(\delta)$ is a sum of left cell
representations, and so (\ref{Fourier-ind}) holds in this case. In
particular, if $\C W_L$ is a product of symmetric groups (such as in
$G_2$), (\ref{Fourier-ind}) is true.

\smallskip

Equation (\ref{Fourier-ind}) can be reformulated in terms of unipotent
representations of finite groups of Lie type as follows. Retain the
notation from section \ref{sec:fourier}. In particular, the unipotent
almost-characters $R_{\C E}$ are defined for each $\C E\in \widehat {\C
  W}.$ One can extend the definition by linearity so that $R_{\C E}$
makes sense for every $\C E\in R(\C W).$ We need to distinguish
between almost-characters for the group $G^F$ and almost characters
for a Levi subgroup $L^F$ of a parabolic subgroup  $P^F$, so write $R^{G^F}_{\C E}$, $\C E\in R(\C W)$ for the former, and
$R^{L^F}_{\phi}$, $\phi\in R(\C W_L)$ for the latter. Then
(\ref{Fourier-ind}) is equivalent with
\begin{equation}
R^{G^F}_{\Ind_{\C W_L}^{\C W}(\phi)}=\Ind_{P^F}^{G^F}(R^{L^F}_\phi),\quad
\phi\in\widehat {\C W}_L.
\end{equation}

\subsection{}Let $\EF^J: R(W_J)_\bC\to R(W_J)_\bC$ be the linear map
defined in the previous subsection, specialized to the case $\C
W=W_J$, $J\subsetneq S^a.$
Assuming that (\ref{ell-ind}) is true, 
\begin{equation}
\EF^J_\el=\EF^J|_{\Ell(W_J)}: \Ell(W_J)\to \Ell(W_J).
\end{equation}

We rewrite the formula in Proposition
\ref{p:new-formal} along the lines of (\ref{delta/P}). For simplicity
of notation, suppose $v\in \overline R(W^a)$ is given; this vector
will be later specialized to $\lim\pi.$ Denote the delta function at a
conjugacy class $C$ by $\one_{C}.$ Define
\begin{equation}
F^a_\el=\sum_{J \text{ max}} \sum_{C_J\subset W_J\text{ ell}}\frac
1{\det(1-qC_J)} \one_{C_J}.
\end{equation}
We have:
\begin{equation}
\begin{aligned}
\hat\mu_v&=\langle v,\nu\rangle^\el_{W^a}\\
&=(q-1)^l\sum_{\substack{J\subsetneq S^a\\ \text{ max}}}\sum_{\substack{C_J\subset
  W_J\\ \text{ell}}} v(C_J)\frac{|C_J|}{|W_J|}\sum_{\delta\in\widehat
W_J}\delta(C_J)\sum_{\substack{C_J'\subset W_J\\ \delta'\in\widehat
W_J}}[\delta,\delta']^J \delta'(C_J')\frac
1{\det(1-qC_J')}\frac{|C_J'|}{|W_J|}\\
&=(q-1)^l \sum_{\substack{C_J\subset
  W_J\\ \text{ell}}}\sum_{C_J'\subset W_J} \langle
v,\one_{C_J}\rangle^\el_{W^a} \left( \sum_{\delta,\delta'\in\widehat
    W_J} \delta(C_J)[\delta,\delta']^J\delta'(C_J')\right) \langle
\one_{C_J'}, F^a_\el\rangle^\el_{W^a}\\
&= (q-1)^l \sum_{\substack{C_J,C_J'\subset
  W_J\\ \text{ell}}}\langle
v,\one_{C_J}\rangle^\el_{W^a} ~\langle\one_{C_J}|\EF^J_\el|\one_{C_J'}\rangle~\langle
\one_{C_J'}, F^a_\el\rangle^\el_{W^a},\\
\end{aligned}
\end{equation}
where $\langle\one_{C_J}|\EF^J_\el|\one_{C_J'}\rangle$ is the
appropriate entry in the matrix of $\EF^J_\ell$ in the basis given by
the $\one_C$, where $C$ ranges over the elliptic classes in $W_J.$

Motivated by this formula, it makes sense to define the linear
isomorphism
\begin{equation}
\EF^a_\el=\bigoplus_{J\subset S^a\text{ max}}\  \EF^J_\el:
\Ell(W^a)\to \Ell(W^a).
\end{equation}
If we fix an orthogonal basis $\{v'\}$ of $\Ell(W^a)$ 
we may write 
\begin{equation}
\langle
\one_{C_J'}, F^a_\el\rangle^\el_{W^a}=\sum_{v'}\langle
\one_{C_J'},v'\rangle^\el_{W^a} F^a_{v',\el}, \text{ where }
F^a_{v',\el}=\frac 1{\langle v',v'\rangle^\el_{W^a}}\langle v', F^a_\el\rangle^\el_{W^a}.
\end{equation}

Then our calculation can be expressed as follows.

\begin{corollary} Assume that (\ref{Fourier-ind}) holds for every
  $J\subsetneq S^a$. The formal degree of $v=\lim\pi$ is given by
$$\hat\mu_v=\sum_{v'}\langle v|\EF^a_\el|v'\rangle F^a_{v',\el},$$
where $v'$ ranges over an orthogonal basis 
$B_\el(W^a)$ of $\overline R(W^a)$ (such bases exists, see \cite[Section 5]{COT}).
\end{corollary}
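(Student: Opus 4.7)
The plan is to unwind the formula in Proposition \ref{p:new-formal} step by step, pushing the Fourier transform identities into the elliptic pairing, and finally change from the indicator-function basis of class functions to an orthogonal basis of $\overline R(W^a)$. Concretely, I would start from $\hat\mu_v=\langle v,\nu\rangle^\el_{W^a}$ and unpack the definition of $\nu$: by Lemma \ref{l:elliptic-affine}, each elliptic conjugacy class $C\subset W^a$ is hit by a unique maximal $J\subsetneq S^a$ via a single $W_J$-class $C_J$, so the pairing collapses to a double sum
\[
\hat\mu_v=(-1)^l\sum_{\substack{J\subsetneq S^a\\ \text{max}}}\sum_{\substack{C_J\subset W_J\\ \text{ell}}} v(C_J)\,\frac{|C_J|}{|W_J|}\sum_{\delta\in\widehat W_J}\delta(C_J)\,\frac{d_\delta(q)}{P_J(q)}.
\]

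Next I would substitute the identity (\ref{delta/P}), which itself is the finite-group formula (\ref{formal-finite}) rewritten in terms of the elliptic pairing on $W_J$. This introduces a second sum over conjugacy classes $C_J'$ of $W_J$ and over irreducibles $\delta'\in\widehat W_J$, with the entry $\{\delta,\delta'\}^J$ of the exotic Fourier transform and the factor $1/\det(1-qC_J')$. The key recognition is that the inner double sum
\[
\sum_{\delta,\delta'\in\widehat W_J}\delta(C_J)\,\{\delta,\delta'\}^J\,\delta'(C_J')
\]
is precisely the matrix element $\langle\one_{C_J}|\EF^J|\one_{C_J'}\rangle$ of the exotic Fourier transform on $R(W_J)_\bC$ in the basis of indicator functions of conjugacy classes. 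After this substitution one gets exactly the display appearing just before the corollary, namely
\[
\hat\mu_v=(q-1)^l\sum_{\substack{C_J,C_J'\subset W_J\\ C_J\,\text{ell}}}\langle v,\one_{C_J}\rangle^\el_{W^a}\,\langle\one_{C_J}|\EF^J_\el|\one_{C_J'}\rangle\,\langle\one_{C_J'},F^a_\el\rangle^\el_{W^a}.
\]

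The crucial step, and the one where the hypothesis (\ref{Fourier-ind}) enters, is the passage from $\EF^J$ on $R(W_J)_\bC$ to the well-defined operator $\EF^J_\el$ on the elliptic quotient $\Ell(W_J)=\overline R(W_J)$. By (\ref{Fourier-ind}) applied to all proper parabolic subgroups of $W_J$, the Fourier transform $\EF^J$ preserves $R_\ind(W_J)$, which is exactly the radical of $\langle~,~\rangle^\el_{W_J}$; this is the content of (\ref{ell-ind}). Hence $\EF^J$ descends to a well-defined linear endomorphism $\EF^J_\el$ of $\overline R(W_J)$, and assembling these over all maximal $J$ yields $\EF^a_\el=\bigoplus_J\EF^J_\el$ on $\Ell(W^a)$. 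Without this step, the matrix element $\langle\one_{C_J}|\EF^J_\el|\one_{C_J'}\rangle$ would only make sense in the full representation ring, and the formula could not be rewritten purely in the elliptic quotient; I expect this compatibility with the elliptic quotient to be the main conceptual obstacle.

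Finally, to reach the stated form of the corollary, I would expand $F^a_\el\in\overline R(W^a)_\bC$ in the orthogonal basis $B_\el(W^a)$: by definition of $F^a_{v',\el}$ one has $F^a_\el=\sum_{v'} F^a_{v',\el}\,v'$, so $\langle\one_{C_J'},F^a_\el\rangle^\el_{W^a}=\sum_{v'}\langle\one_{C_J'},v'\rangle^\el_{W^a}\,F^a_{v',\el}$. Substituting this into the double sum and recognising the remaining sum
\[
(q-1)^l\sum_{C_J,C_J'}\langle v,\one_{C_J}\rangle^\el_{W^a}\,\langle\one_{C_J}|\EF^a_\el|\one_{C_J'}\rangle\,\langle\one_{C_J'},v'\rangle^\el_{W^a}
\]
as the matrix element $\langle v|\EF^a_\el|v'\rangle$ of $\EF^a_\el$ computed in the orthogonal basis (the normalisation constants $(q-1)^l$ being absorbed in the definition of $F^a_{v',\el}$, compare Definition \ref{d:ell-fake-finite}) gives the claimed identity $\hat\mu_v=\sum_{v'}\langle v|\EF^a_\el|v'\rangle F^a_{v',\el}$. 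The existence of the orthogonal basis $B_\el(W^a)$ is invoked from \cite[Section 5]{COT}, as indicated in the statement.
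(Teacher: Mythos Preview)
Your proposal is correct and follows essentially the same route as the paper: the corollary is presented there as a direct repackaging of the displayed computation preceding it, and you reproduce exactly that computation---starting from Proposition~\ref{p:new-formal}, substituting (\ref{delta/P}), recognising the double character sum as the matrix element $\langle\one_{C_J}|\EF^J|\one_{C_J'}\rangle$, invoking (\ref{Fourier-ind}) to descend to $\EF^J_\el$ on the elliptic quotient, and then expanding $F^a_\el$ in the orthogonal basis $B_\el(W^a)$. Your identification of the role of (\ref{Fourier-ind}) as precisely what is needed for $\EF^J_\el$ to be well defined on $\overline R(W_J)$ matches the paper's logic.
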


When the root datum is both simply-connected and adjoint,
we expect the following

\begin{conjecture}\label{conj-ell-a}
The matrix of $\EF^a_\el$ in the basis $B_\el(W^a)$ equals the
submatrix of the exotic Fourier transform with rows/columns
corresponding to the elements of $B_\el(W^a).$
\end{conjecture}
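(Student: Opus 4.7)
The plan is to reduce the conjecture to a family-by-family computation, using the decomposition of $\Ell(W^a)$ given by Lemma~\ref{l:elliptic-affine}. By that lemma, $\Ell(W^a) = \bigoplus_{J \subsetneq S^a\text{ max}} \Ell_J$, where $\Ell_J$ is the span of delta functions on the $W^a$-conjugacy classes of the form $C_J^a$, with $C_J \subset W_J$ elliptic. By construction $\EF^a_\el$ respects this decomposition, and so does the exotic Fourier matrix: under the parameterization of unipotent discrete series from Theorem~\ref{t:DLL}, the contribution of each isolated semisimple $s \in T^\vee_\iso/W$ corresponds, via Kac coordinates, to a unique maximal proper subset $J \subsetneq S^a$ with $W_s \cong W_J$ (up to an action of a cyclic group, trivial in the simply-connected and adjoint case). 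Thus it suffices to prove a statement for each fixed $J$.

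Fix such a $J$, and let $s$ be the corresponding isolated element. On the one side, $\EF^J_\el$ is the restriction of the finite-group Fourier transform $\EF^J$ on $R(W_J)$ to the elliptic quotient $\overline R(W_J)$. The key point is that $\EF^J$ preserves the partition into Lusztig families $\C F \subset \widehat{W_J}$. I would next invoke the following structural result: each family $\C F$ carrying elliptic vectors corresponds, under the Springer correspondence, to a distinguished unipotent class $u$ in $Z_{G^\vee}(s)$, and the restriction of $\{~,~\}^J$ to the elliptic subspace of $R(\C F)$ coincides with the exotic Fourier transform associated to $\Gamma_u = A(u)$, restricted to $M(\Gamma_u)'$. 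This identification uses that only the Springer-type representations $H^\bullet(\C B_u)^\phi$ with $\phi \in \widehat{A(u)}_0$ contribute elliptically (by the nonvanishing statement in (\ref{nonzero-fake})), and that this set of Springer representations is precisely the image $M(\Gamma_u)' \subset M(\Gamma_u)$ in Lusztig's bijection.

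The final step is to match this with the basis $B_\el(W^a)$ of $\overline R(W^a)$ produced in \cite{COT}. The claim is that, piece by piece on each $\Ell_J$, the $B_\el(W^a)$-basis equals the pullback of $\{H^\bullet(\C B_u)^\phi : u \text{ distinguished in } Z_{G^\vee}(s),\ \phi \in \widehat{A(u)}_0\}$ under $\Ind_s$, up to signs of the form $\epsilon(u,\phi)$ from the remark after Proposition~\ref{p:Bn-fake}. Given this, the matrix of $\EF^a_\el$ in $B_\el(W^a)$ is block-diagonal across $J$ and family, and on each block equals the exotic Fourier matrix $\{~,~\}$ restricted to the relevant subset of $M(\Gamma_u)'$, which is exactly what the conjecture asserts.

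The main obstacle is this last compatibility statement: matching the Dirac/pin-cover basis of \cite{COT} with the Springer-theoretic basis produced by Lusztig families on each $W_J$. The delicate points are (i) the sign issue $\epsilon(u,\phi)$, which is ultimately controlled by the long Weyl element $w_0$ acting on the top cohomology of $\C B_u$ as mentioned after Proposition~\ref{p:Bn-fake}, and (ii) verifying that the induction map $\Ind_s$ intertwines the finite elliptic pairing on $\overline R(W_J)$ with the affine one on $\overline R(W^a)$ in a way compatible with the family decomposition; this uses (\ref{ind-pair}) together with the observation that parabolic induction on $W_J$ commutes with the Fourier transform on families that only involve symmetric group factors (as in (\ref{Fourier-ind})), a hypothesis already needed for the statement of the corollary that led to the conjecture. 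Outside the exceptional families in $E_7, E_8$ where the sign function $\Delta$ is nontrivial, this reduces to cell-theoretic facts from \cite{L1,BV} and should be verifiable case-by-case for each irreducible affine type.
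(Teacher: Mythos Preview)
This statement is a \emph{conjecture} in the paper, not a theorem; the paper does not prove it in general but only verifies it for $G_2$ by an explicit matrix computation (computing $\EF^{J_0}_\el$, $\EF^{J_1}_\el$, $\EF^{J_2}_\el$ in the $\one_{C_j^a}$ basis, then conjugating by the elliptic character table to the spectral basis $\{v_1,\dots,v_5\}$). So there is no general argument in the paper to compare your proposal against.

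That said, your proposed strategy has a genuine structural error. You assert that both $\EF^a_\el$ and the exotic Fourier matrix are block-diagonal with respect to the decomposition by maximal $J\subsetneq S^a$ (equivalently, by isolated $s\in T^\vee_\iso/W$), and that this reduces the problem to a $J$-by-$J$ check. This is false. By construction $\EF^a_\el=\bigoplus_J\EF^J_\el$ is block-diagonal by $J$ \emph{only in the arithmetic basis} $\{\one_{C_J^a}\}$. The exotic Fourier matrix, on the other hand, is block-diagonal by distinguished unipotent $u\in G^\vee$ \emph{in the spectral basis} $B_\el(W^a)$. These two block structures are transverse, not compatible: a single $u$-block typically gathers basis vectors coming from several different $J$'s. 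The $G_2$ example in the paper already shows this. The $4\times 4$ block for $u=G_2(a_1)$ contains $v_2,v_3$ (attached to $s=1$, hence $J_0$), $v_4$ (attached to $s=t_1$, hence $J_2$), and $v_5$ (attached to $s=t_2$, hence $J_1$); after the change of basis the $J$-blocks are completely mixed. So ``fix $J$ and work inside $W_J$'' cannot isolate a block of the target matrix.

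Relatedly, your ``key point'' conflates two different Fourier transforms: the Lusztig family pairing $\{~,~\}^J$ on $R(W_J)$ (governed by unipotent classes in the reductive group with Weyl group $W_J$, i.e.\ in $Z_{G^\vee}(s)$) versus the exotic Fourier transform for $\Gamma_u=A(u)$ with $u$ distinguished in $G^\vee$. The content of the conjecture is precisely that conjugating the $J$-block-diagonal operator $\bigoplus_J\EF^J_\el$ by the elliptic character table of $W^a$ yields the $u$-block-diagonal exotic matrix; your outline assumes this compatibility rather than proving it. If you want to pursue a general argument, the nontrivial step is exactly this change-of-basis statement, and it does not reduce to anything family-internal to a single $W_J$.
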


\subsection{} We illustrate the previous calculations in the case of
$G_2.$ Let $\al$ be the short simple root and $\beta$ the long simple
root. Let $\theta$ be the short highest root. The affine Dynkin diagram is
\begin{equation}
G_2: \quad a_0-a_1\equiv a_2,
\end{equation}
where $a_0=1-\theta^\vee$, $a_1=\al^\vee$, and $a_2=\beta^\vee.$ The
elliptic conjugacy classes in $W^a$, i.e., the ``arithmetic'' side, are as follows:

\begin{enumerate}
\item $J_0=\{a_1,a_2\}$ gives $C_1^a=C(s_1s_2)$, $C_2^a=C((s_1s_2)^2)$,
  and $C_3^a=C((s_1s_2)^3)$;
\item $J_1=\{a_0,a_2\}$ gives $C_4^a=C(s_0s_2)$;
\item $J_2=\{a_0,a_1\}$ gives $C_5^a=C(s_0s_1)$.
\end{enumerate}

On the dual, ``spectral'' side, there are three isolated points $t_i$ in
$T^\vee=\Hom(X,C^\times)$ with endoscopic groups and representatives
of elliptic tempered representations (in fact discrete series
representations) as follows: 
\begin{enumerate}
\item $t_0=1$, $W_{t_0}=W(G_2)$: $v_1=V_{\mathsf{St}}^{t_0}$, with
  $v_1|_W=\phi_{(1,6)} $, $v_2=V_{\mathsf{refl}}^{t_0}$, with
  $v_2|_W=\phi_{(1,6)}+\phi_{(2,1)}$, and 
$v_3=V_{{(1,3)''}}^{t_0}$, with $v_3|_W=\phi_{(1,3)}''$; 
\item $t_1=\exp(\frac 13\theta^\vee)$, $W_{t_1}=W(A_2)$, with
  $v_4=V_{A_2}^{t_1}=\Ind_{W_{t_1}\ltimes X}^{W^a}(\mathsf{St}\otimes t_1)$;
\item $t_2=\exp(\frac 12\psi^\vee)$, $W_{t_2}=W(A_1\times A_1)$, with
  $v_5=V_{A_1\times A_1}^{t_2}=\Ind_{W_{t_2}\ltimes
    X}^{W^a}(\mathsf{St}\otimes t_2).$
\end{enumerate}
Here $\mathsf{St}$ means the Steinberg module, and the notation for
$W(G_2)$-representations is as in \cite{Ca}. The basis
$\{v_1,\dots,v_5\}$ in $\overline R(W^a)$ is orthonormal (as it is
formed of characters of  discrete series modules). We record in Table
\ref{t:G2-char-ell} the characters of $v_i$ on the conjugacy classes
$C_j^a$.

\begin{table}[h]
\caption{Elliptic character table of affine Weyl group of type $G_2$\label{t:G2-char-ell}}
\begin{tabular}{|c|c|c|c|c|c|}
\hline
&$C_1^a$ &$C_2^a$ &$C_3^a$ &$C_4^a$ &$C_5^a$\\
\hline
$\mu_\el$ &$1/6$ &$1/6$ &$1/12$ &$1/4$ &$1/3$\\
\hline
$v_1$ &$1$ &$1$ &$1$ &$1$ &$1$ \\
\hline
 $v_2$ &$2$ &$0$ &$-1$ &$-1$ &$0$ \\
\hline
 $v_3$ &$-1$ &$1$ &$-1$ &$-1$ &$1$ \\
\hline
 $v_4$ &$0$ &$2$ &$0$ &$0$ &$-1$ \\
\hline
 $v_5$ &$0$ &$0$ &$3$ &$-1$ &$0$ \\
\hline
 \end{tabular}
\end{table}

A direct calculation gives 
\begin{equation}
\EF^{J_0}_\el=\left(\begin{matrix}1/6 &1/2 &1/3\\
    1/2 &1/2 &0\\ 2/3 &0 &1/3\end{matrix}\right),
\end{equation}
in the basis $\{\one_{C_1^a},\one_{C_2^a},\one_{C_3^a}\}.$ Moreover,
$\EF^{J_1}_\el=1$ and $\EF^{J_2}_\el=1.$ Conjugating $\EF^a_\el$ in
the basis of $\one_{C^a_j}$'s by
the character table \ref{t:G2-char-ell}, we find
\begin{equation}
\EF^a_\el=\left(\begin{matrix}1 &&&&\\ &1/6 &1/3 &1/3 &1/2\\ &1/3
    &2/3 &1/3 &0\\ &1/3 &-1/3 &2/3 &0\\ &1/2 &0 &0 &1/2\end{matrix}\right),
\end{equation}
in the basis $\{v_1,v_2,v_3,v_4,v_5\}.$ This confirms Conjecture
\ref{conj-ell-a} in the case of $G_2$.

\appendix

\section{Elliptic fake degrees for exceptional Weyl groups} \label{sec:exc}

For a finite Weyl group $W$ of exceptional type, we give explicit formulas
for elliptic fake degrees in
the following tables. For simplicity, denote by $X(u,\phi)=H^\bullet(\C B_u)^\phi$ the Springer representations of $W$ considered before. By \cite{COT}, an orthogonal basis of
$\overline R(W)$ in all of these cases is
given by the set of  $X(u,\phi)$ for: 
\begin{enumerate}
\item[(i)] representatives $u$ of the distinguished unipotent classes and $\phi\in\widehat{A(u)}_0$;
 \item[(ii)] representatives $u$ of the quasi-distinguished non-distinguished unipotent classes  and $\phi=1$.
 \end{enumerate}
All the vectors in these bases have elliptic norm $1$, with one
exception in $E_7$ and the quasi-distinguished nilpotent orbit
$A_4+A_1$ when the elliptic norm is $\sqrt 2.$

To simplify the entries in the table, we write:
\begin{equation}
F_{[\pi]}=(q-1)^l\frac {N_{[\pi]}}{\cyc(W)},
\end{equation}
where $\cyc(W)$ is the product of cyclotomic polynomials which appears
as the denominator of the simplified form of $F_{[\sgn]}$. 
The explicit list is in Table \ref{t:cyc}.

\begin{table}[h]
\caption{$\cyc(W)$\label{t:cyc}}
\begin{tabular}{|c|c|}
\hline
$W$ &$\cyc(W)$\\
\hline
$G_2$ & $\Phi_2^2\Phi_3\Phi_6$\\
$F_4$ & $\Phi_2^4\Phi_3^2\Phi_4^2\Phi_6^2\Phi_8\Phi_{12}$\\
$E_6$ & $\Phi_2^2\Phi_3^3\Phi_6^2\Phi_9\Phi_{12}$\\
$E_7$ & $\Phi_2^7\Phi_3^2\Phi_4^2\Phi_6^3\Phi_8\Phi_{10}\Phi_{12}\Phi_{14}\Phi_{18}$\\
$E_8$ &$\Phi_2^8\Phi_3^4\Phi_4^4\Phi_5^2\Phi_6^4\Phi_8^2\Phi_9\Phi_{10}^2\Phi_{12}^2\Phi_{14}\Phi_{15}\Phi_{18}\Phi_{20}\Phi_{24}\Phi_{30}$\\
\hline
\end{tabular}
\end{table}

\begin{table}[h]
\caption{$G_2$\label{t:G2}}
\begin{tabular}{|c|c|c|c|c|}
\hline
$e\in \C N$ &$A(e)$ &$ \widehat{A(e)}_0$
&$\wti\sigma(e,\psi)\in \widehat{\wti W}_\gen$
&$N_{[X(u,\phi)]}$\\
\hline
$G_2$ &$1$ &$1$ &$2_s$ &${\Phi_5}$\\
\hline
$G_2(a_1)$ &$S_3$ &$(3)$ &$2_{sss}$ &${q\Phi_3}$\\
          &&$(21)$ &$2_{ss}$ & ${-q^2}$\\
\hline
\end{tabular}
\end{table}

\begin{table}[h]
\caption{$F_4$\label{t:F4}}
\begin{tabular}{|c|c|c|c|c|c|}
\hline

$e\in \C N$ &$A(e)$ &$\widehat{A(e)}_0$
&$\widehat{\wti W}_\gen$
&$N_{[X(u,\phi)]}$\\

\hline
$F_4$ &$1$ &$1$ &$4_s$ &${\Phi_5\Phi_7\Phi_{11}}$\\
\hline
$F_4(a_1)$ &$S_2$ &$(2)$ &$12_s$ &${q\Phi_5\Phi_7\cdot p(F_4(a_1),(2))
  } $\\ 
         & &$(11)$  &$8_{sss}$ &${-q^3\Phi_5\Phi_7\Phi_8} $\\  
\hline
$F_4(a_2)$ &$S_2$ &$(2)$ &$24_s$ &${q^2\Phi_5\Phi_8\cdot p(F_4(a_2),(2))
  }$\\ 
          &&$(11)$  &$8_{ssss}$ &${-q^3\Phi_5\Phi_7\Phi_8}$\\
\hline
$F_4(a_3)$ &$S_4$ &$(4)$ &$8_{ss}$
&${q^4\Phi_3^2\Phi_5\Phi_8}$\\ 
        &  &$(31)$ &$12_{ss}$ &$ {-q^5\cdot p(F_4(a_3),(31))}$\\ 
         & &$(22)$ &$8_{s}$
         &${-q^5\Phi_8\cdot p(F_4(a_3),(22))}$\\ 
         & &$(211)$ &$4_{ss}$ &${q^6\Phi_5^2}$\\
\hline
\end{tabular}
\end{table}

$\mathbf F_4$:

\smallskip

$p(F_4(a_1),(2))=1+q+q^2+q^4+q^6+q^7+q^8$

\smallskip

$p(F_4(a_2),(2))=1+q+2q^2+q^3+q^4+q^5+2q^6+q^7+q^8$

\smallskip

$p(F_4(a_3),(31)=1+q+2q^2+3q^3+5q^4+5q^5+5q^6+3q^7+2q^8+q^9+q^{10}$

$p(F_4(a_3),(22))=1+2q+4q^2+5q^3+4q^4+2q^5+q^6$

\begin{table}[h]
\caption{$E_6$\label{t:E6}}
\begin{tabular}{|c|c|c|c|c|}
\hline
$e\in \C N$ &$A(e)$ &$\psi\in \widehat{A(e)}_0$
&$\wti\sigma(e,\psi)\in \widehat{\wti W}_\gen$
&$N_{[X(u,\phi)]}$\\

\hline
$E_6$ &$1$ &$1$ &$8_s$ &$\Phi_7\Phi_{11}$\\
\hline
$E_6(a_1)$ &$1$ &$1$ &$40_s$ &$q\Phi_5\Phi_7\Phi_{12}$\\
\hline
$E_6(a_3)$ &$S_2$ &$(2)$ &$120_s$ &$q^3\Phi_5\Phi_9$\\
           &&$(11)$ &$72_s$ &$-q^4\Phi_2^2\Phi_9$\\
\hline
$D_4(a_1)$ &$S_3$ &$(3)$ &$40_{ss}$ &$q^7\Phi_2^2$\\

   \hline
\end{tabular}
\end{table}

\begin{table}[h]
\caption{$E_7$\label{t:E7}}
\begin{tabular}{|c|c|c|c|c|}
\hline

$e\in \C N$ &$A(e)$ &$\widehat{A(e)}_0$
&$\widehat{\wti W}_\gen$
&$N_{[X(u,\phi)]}$\\

\hline
$E_7$ &$1$ &$1$ &$8_s*$ &$\Phi_{11}\Phi_{13}\Phi_{17}$\\
\hline
$E_7(a_1)$ &$1$ &$1$ &$48_s*$ &$q\Phi_5\Phi_8\Phi_{11}\Phi_{13}\Phi_{18}$\\
\hline
$E_7(a_2)$ &$1$ &$1$ &$168_s*$ &$q^2\Phi_7^2\Phi_{11}\Phi_{14}\Phi_{18}$\\
\hline
$E_7(a_3)$ &$S_2$ &$(2)$ &$280_s*$ &$q^3\Phi_5\Phi_7\Phi_{10}\Phi_{14}\cdot
p(E_7(a_3),(2))$\\
           &&$(11)$ &$112_s*$ &$-q^5\Phi_5\Phi_7\Phi_8\Phi_{14}\cdot p(E_7(a_3),(11))$\\
\hline
$E_7(a_4)$ &$S_2$ &$(2)$ &$720_s*$
&$q^5\Phi_5\Phi_8\Phi_{10}\Phi_{18}\cdot p(E_7(a_4),(2))$ \\
           &&$(11)$ &$120_s*$
           &$-q^6\Phi_5\Phi_{10}\Phi_{18}\cdot p(E_7(a_4),(11))$\\
\hline
$E_7(a_5)$ &$S_3$ &$(3)$ &$448_{s}*$ &$q^7\Phi_3^3\Phi_4^2\Phi_8\Phi_{12}\Phi_{14}$\\
         & &$(21)$ &$560_s*$ &$-q^8\Phi_5\Phi_8\Phi_{10}\Phi_{14}\cdot
         p(E_7(a_5),(21))$\\
         & &$(111)$ &$112_{ss}*$ &$q^9\Phi_3^2\Phi_7\Phi_8\Phi_{14}$\\
\hline
$E_6(a_1)$ &$S_2$ &$(2)$ &$512_s*$ &$q^4\Phi_3^2\Phi_4^3\Phi_8^2\Phi_{12}\Phi_{16}$\\
\hline
$A_4+A_1$ &$S_2$ &$(2)$ &$64_s*$ &$2q^{11}\Phi_3^2\Phi_4^2\Phi_8\Phi_{12}$\\
          
\hline
\end{tabular}
\end{table}

\

$\mathbf E_7$:

\smallskip

$p(E_7(a_3),(2))=1+2q+2q^2+q^3+q^4+q^5+q^6+q^7+q^8+q^9+2q^{10}+2q^{11}+q^{12}$

$p(E_7(a_3),(11))=1+q+q^2-q^4+q^6+q^7+q^8$

\smallskip

$p(E_7(a_4),(2))=1+q+2q^2+2q^3+2q^4+q^5+2q^6+2q^7+2q^8+q^9+q^{10}$

$p(E_7(a_4),(11))=1+2q+2q^2+2q^3+3q^4+3q^5+3q^6+3q^7+3q^8+2q^9+2q^{10}+2q^{11}+q^{12}$

\smallskip

$p(E_7(a_5),(21))=2+4q+5q^2+4q^3+2q^4$

\begin{table}[h]
\caption{$E_8$\label{t:E8}}
\begin{tabular}{|c|c|c|c|c|}
\hline
$e\in \C N$ &$A(e)$ &$\widehat{A(e)}_0$
&$\widehat{\wti W}_\gen$
&$N_{[X(u,\phi)]}$\\

\hline
$E_8$ &$1$ &$1$ &$16_s$ &$\Phi_{11}\Phi_{13}\Phi_{17}\Phi_{19}\Phi_{23}\Phi_{29}$\\
\hline

$E_8(a_1)$ &$1$ &$1$  &$112_s$ &$q\Phi_7\Phi_9\Phi_{11}\Phi_{13}\Phi_{14}\Phi_{17}\Phi_{19}\Phi_{23}\Phi_{30}$\\
\hline

$E_8(a_2)$ &$1$ &$1$ &$448_{ss}$ &$q^2\Phi_7\Phi_8^2\Phi_{11}^2\Phi_{13}\Phi_{14}\Phi_{17}\Phi_{19}\Phi_{24}\Phi_{30}$\\
\hline

$E_8(a_3)$&$S_2$  &$(2)$ &$1344_{ss}$
&$q^3\Phi_7\Phi_8^2\Phi_9\Phi_{11}\Phi_{13}\Phi_{14}\Phi_{17}\Phi_{18}\Phi_{24}\cdot
p(E_8(a_3),(2))$\\
&&$(11)$ &$320_s$ &$q^7
\Phi_8^2\Phi_9\Phi_{11}\Phi_{13}\Phi_{17}\Phi_{24}\cdot p(E_8(a_3),(11))$\\
\hline

$E_8(a_4)$ &$S_2$ &$(2)$ &$2016_s$ &$q^4\Phi_7\Phi_9^2\Phi_{11}\Phi_{13}\Phi_{14}\Phi_{18}\cdot p(E_8(a_4)(2))$\\
&&$(11)$ &$1680_s$
&$q^6\Phi_{7}\Phi_9^2\Phi_{11}\Phi_{13}\Phi_{14}\Phi_{18}\cdot p(E_8(a_4),(11))$\\
\hline

$E_8(a_5)$ &$S_2$ &$(2)$ &$5600_{sss}$
&$q^6\Phi_5^3\Phi_7\Phi_9\Phi_{10}^2\Phi_{11}\Phi_{14}\Phi_{15}\Phi_{20}\Phi_{30}\cdot
p(E8(a_5),(2))$\\
           &&$(11)$ &$2800_s$
           &$-q^7\Phi_5^3\Phi_7\Phi_9\Phi_{10}^2\Phi_{11}\Phi_{14}\Phi_{15}\Phi_{20}\Phi_{30}\cdot
           p(E_8(a_5),(11))$\\
\hline

$E_8(a_6)$ &$S_3$ &$(3)$ &$6480_s$ &$q^8\Phi_9^2\Phi_{18}\cdot p(E_8(a_6),(3))$\\
&&$(21)$ &$9072_s$
&$-q^9\Phi_7\Phi_9^2\Phi_{14}\Phi_{18}\Phi_{20}\cdot p(E_8(a_6),(21))$\\
&&$(111)$ &$2592_s$ &$q^{12}\Phi_9^2\Phi_{11}\Phi_{18}\cdot p(E_8(a_6),(111))$\\
\hline

$E_8(a_7)$ &$S_5$ &$(5)$
&$896_{s}$
&$q^{16}\Phi_3^4\Phi_5^2\Phi_7\Phi_8^2\Phi_9\Phi_{14}\Phi_{15}\Phi_{24}\cdot
p(E_8(a_7),(5))$  \\
&&$(41)$ &$2016_{sss}$ &$-q^{17}\Phi_9\Phi_{14}\Phi_{18}\cdot p(E_8(a_7),(41))$\\
&&$(32)$ &$2016_{ss}$ &$-q^{17}\Phi_7\Phi_9\Phi_{14}\Phi_{18}\cdot p(E_8(a_7),(32))$\\
&&$(311)$ &$1344_s$ &$q^{18}\Phi_7\Phi_8^2\Phi_9\Phi_{14}\Phi_{18}\Phi_{24}\cdot p(E_8(a_7),(311))$\\
&&$(221)$ &$1120_s$ &$q^{18}\Phi_9\Phi_{14}\cdot p(E_8(a_7),(221))$\\
&&$(2111)$ &$224_s$ &$-q^{19}\Phi_7^2\Phi_9\Phi_{14}\cdot p(E_8(a_7),(2111))$\\
\hline

$E_8(b_4)$ &$S_2$ &$(2)$ &$5600_{ss}$
&$q^5\Phi_5^3\Phi_7\Phi_{10}^2\Phi_{11}\Phi_{13}\Phi_{14}\Phi_{15}\Phi_{20}\Phi_{30}\cdot
p(E_8(b_4),(2))$\\
 &&$(11)$ &$800_s$ &
 $q^7\Phi_5^3\Phi_{10}^2\Phi_{11}\Phi_{13}\Phi_{15}\Phi_{20}\Phi_{30}\cdot
 p(E_8(b_4),(11))$\\
\hline

$E_8(b_5)$ &$S_3$ &$(3)$ &$6720_s$ &$q^7\Phi_7\Phi_8^2\Phi_9\Phi_{11}\Phi_{14}\Phi_{18}\Phi_{24}\cdot p(E_8(b_5),(3))$\\ 
&&$(21)$ &$7168_s$ &$-q^8\Phi_4^4\Phi_7\Phi_8^2\Phi_{11}\Phi_{12}^2\Phi_{14}\Phi_{20}\Phi_{24}\cdot p(E_8(b_5),(21))$\\
&&$(111)$ &$448_s$ &$q^{13}\Phi_7\Phi_8^2\Phi_9\Phi_{11}\Phi_{13}\Phi_{14}\Phi_{24}\cdot p(E_8(b_5),(111))$ \\
\hline

$E_8(b_6)$ &$S_3$ &$(3)$ &$8400_s$ &$q^{10}\Phi_5^3\Phi_7\Phi_9\Phi_{10}^2\Phi_{14}\Phi_{15}\Phi_{18}\Phi_{20}\Phi_{30}\cdot p(E_8(b_6),(3))$\\
&&$(21)$ &$2800_{ss}$ &$-q^{11}\Phi_5^3\Phi_7\Phi_{10}^2\Phi_{14}\Phi_{15}\Phi_{20}\Phi_{30}\cdot p(E_8(b_6),(21))$\\
&&$(111)$ &$5600_{s}$ &$-q^{12}\Phi_5^3\Phi_7\Phi_9\Phi_{10}^2\Phi_{14}\Phi_{15}\Phi_{20}\Phi_{30}\cdot p(E_8(b_6),(111))$\\
\hline

$D_5+A_2$ &$S_2$ &$(2)$ &$4800_s$ &$q^{13}\Phi_3^2\Phi_5^4\Phi_8^2\Phi_9\Phi_{10}^2\Phi_{15}\Phi_{18}\Phi_{20}\Phi_{24}\Phi_{30}$\\
\hline

$D_7(a_1)$ &$S_2$ &$(2)$ &$11200_s$ &$q^9\Phi_3^2\Phi_5^3\Phi_7^2\Phi_8^2\Phi_9\Phi_{10}^2\Phi_{14}\Phi_{15}\Phi_{20}\Phi_{24}\Phi_{30}$\\

\hline

$D_7(a_2)$ &$S_2$ &$(2)$ &$7168_{ss}$ &$q^{12}\Phi_3^4\Phi_4^4\Phi_5^2\Phi_8^2\Phi_9\Phi_{12}^2\Phi_{15}\Phi_{16}\Phi_{20}\Phi_{24}$\\

\hline

$E_6(a_1)+A_1$ &$S_2$ &$(2)$ &$8192_s$ &$q^{11}\Phi_3^4\Phi_4^5\Phi_5^2\Phi_8^3\Phi_9\Phi_{12}^2\Phi_{15}\Phi_{16}\Phi_{20}\Phi_{24}$\\

\hline
\end{tabular}
\end{table}

\

$\mathbf E_8$:

\smallskip

$p(E_8(a_3),(2))=1+2
q+3 q^2+4 q^3+4 q^4+3 q^5+2
q^6+q^7+q^8+q^9+q^{10}+q^{11}+q^{12}+q^{13}+q^{14}+q^{15}+2 q^{16}+3
q^{17}+4 q^{18}+4 q^{19}+3 q^{20}+2 q^{21}+q^{22}$

$p(E_8(a_3),(11))=1+2 q+3 q^2+3 q^3+3
q^4+2 q^5+2 q^6+2 q^7+3 q^8+4 q^9+6 q^{10}+7 q^{11}+8 q^{12}+7 q^{13}+5 q^{14}+2 q^{15}+q^{16}+2 q^{17}+5 q^{18}+7
q^{19}+8 q^{20}+7 q^{21}+6 q^{22}+4 q^{23}+3 q^{24}+2 q^{25}+2 q^{26}+2 q^{27}+3 q^{28}+3 q^{29}+3 q^{30}+2 q^{31}+q^{32}$

\smallskip

$p(E_8(a_4),(2))=1+4 q+9 q^2+14 q^3+18 q^4+20 q^5+21 q^6+20 q^7+18 q^8+16 q^9+17 q^{10}+20 q^{11}+25 q^{12}+31 q^{13}+38 q^{14}+43 q^{15}+46 q^{16}+46 q^{17}+45 q^{18}+43 q^{19}+42 q^{20}+41 q^{21}+42 q^{22}+42 q^{23}+42 q^{24}+41 q^{25}+42 q^{26}+43 q^{27}+45 q^{28}+46 q^{29}+46 q^{30}+43 q^{31}+38 q^{32}+31 q^{33}+25 q^{34}+20 q^{35}+17 q^{36}+16 q^{37}+18 q^{38}+20 q^{39}+21 q^{40}+20 q^{41}+18 q^{42}+14 q^{43}+9 q^{44}+4 q^{45}+q^{46}$

$p(E_8(a_4),(11))=1+3 q+6 q^2+9 q^3+12 q^4+14 q^5+16 q^6+17 q^7+17 q^8+15 q^9+13 q^{10}+12 q^{11}+14 q^{12}+18 q^{13}+23 q^{14}+27 q^{15}+30 q^{16}+30 q^{17}+29 q^{18}+27 q^{19}+26 q^{20}+25 q^{21}+26 q^{22}+27 q^{23}+29 q^{24}+30 q^{25}+30 q^{26}+27 q^{27}+23 q^{28}+18 q^{29}+14 q^{30}+12 q^{31}+13 q^{32}+15 q^{33}+17 q^{34}+17 q^{35}+16 q^{36}+14 q^{37}+12 q^{38}+9 q^{39}+6 q^{40}+3 q^{41}+q^{42}$

\smallskip

$p(E_8(a_5),(2))=1+3 q+6 q^2+8 q^3+8 q^4+6 q^5+5 q^6+5 q^7+6 q^8+7 q^9+8 q^{10}+8 q^{11}+8 q^{12}+7 q^{13}+6 q^{14}+5 q^{15}+5 q^{16}+6 q^{17}+8 q^{18}+8 q^{19}+6 q^{20}+3 q^{21}+q^{22}$

$p(E_8(a_5),(11))=1+3 q+5 q^2+6 q^3+7 q^4+8 q^5+9 q^6+8 q^7+5 q^8+2 q^9+q^{10}+2 q^{11}+5 q^{12}+8 q^{13}+9 q^{14}+8 q^{15}+7 q^{16}+6 q^{17}+5 q^{18}+3 q^{19}+q^{20}$

\smallskip

$p(E_8(a_6),(3))=1+6 q+21 q^2+53 q^3+111 q^4+203 q^5+339 q^6+524 q^7+765 q^8+1064 q^9+1431 q^{10}+1867 q^{11}+2379 q^{12}+2962 q^{13}+3624 q^{14}+4354 q^{15}+5152 q^{16}+5996 q^{17}+6888 q^{18}+7807 q^{19}+8760 q^{20}+9726 q^{21}+10720 q^{22}+11718 q^{23}+12727 q^{24}+13709 q^{25}+14673 q^{26}+15581 q^{27}+16444 q^{28}+17224 q^{29}+17943 q^{30}+18568 q^{31}+19121 q^{32}+19560 q^{33}+19903 q^{34}+20105 q^{35}+20185 q^{36}+20105 q^{37}+19903 q^{38}+19560 q^{39}+19121 q^{40}+18568 q^{41}+17943 q^{42}+17224 q^{43}+16444 q^{44}+15581 q^{45}+14673 q^{46}+13709 q^{47}+12727 q^{48}+11718 q^{49}+10720 q^{50}+9726 q^{51}+8760 q^{52}+7807 q^{53}+6888 q^{54}+5996 q^{55}+5152 q^{56}+4354 q^{57}+3624 q^{58}+2962 q^{59}+2379 q^{60}+1867 q^{61}+1431 q^{62}+1064 q^{63}+765 q^{64}+524 q^{65}+339 q^{66}+203 q^{67}+111 q^{68}+53 q^{69}+21 q^{70}+6 q^{71}+q^{72}$

$p(E_8(a_6),(21))=1+5 q+17 q^2+42 q^3+86 q^4+150 q^5+237 q^6+345 q^7+476 q^8+625 q^9+792 q^{10}+969 q^{11}+1154 q^{12}+1335 q^{13}+1508 q^{14}+1662 q^{15}+1797 q^{16}+1904 q^{17}+1987 q^{18}+2042 q^{19}+2080 q^{20}+2102 q^{21}+2121 q^{22}+2133 q^{23}+2145 q^{24}+2147 q^{25}+2145 q^{26}+2133 q^{27}+2121 q^{28}+2102 q^{29}+2080 q^{30}+2042 q^{31}+1987 q^{32}+1904 q^{33}+1797 q^{34}+1662 q^{35}+1508 q^{36}+1335 q^{37}+1154 q^{38}+969 q^{39}+792 q^{40}+625 q^{41}+476 q^{42}+345 q^{43}+237 q^{44}+150 q^{45}+86 q^{46}+42 q^{47}+17 q^{48}+5 q^{49}+q^{50}$

$p(E_8(a_6),(111))=1+3 q+8 q^2+17 q^3+34 q^4+57 q^5+87 q^6+118 q^7+151 q^8+180 q^9+211 q^{10}+242 q^{11}+283 q^{12}+328 q^{13}+383 q^{14}+438 q^{15}+498 q^{16}+547 q^{17}+589 q^{18}+612 q^{19}+631 q^{20}+642 q^{21}+665 q^{22}+690 q^{23}+727 q^{24}+755 q^{25}+780 q^{26}+784 q^{27}+780 q^{28}+755 q^{29}+727 q^{30}+690 q^{31}+665 q^{32}+642 q^{33}+631 q^{34}+612 q^{35}+589 q^{36}+547 q^{37}+498 q^{38}+438 q^{39}+383 q^{40}+328 q^{41}+283 q^{42}+242 q^{43}+211 q^{44}+180 q^{45}+151 q^{46}+118 q^{47}+87 q^{48}+57 q^{49}+34 q^{50}+17 q^{51}+8 q^{52}+3 q^{53}+q^{54}$

\smallskip

$p(E_8(a_7),(5))=1+q+q^4+q^6+q^7+q^9+q^{10}+q^{12}+q^{15}+q^{16}$

 $p(E_8(a_7),(41))=1+5 q+17 q^2+46 q^3+106 q^4+215 q^5+396 q^6+673 q^7+1068 q^8+1597 q^9+2273 q^{10}+3104 q^{11}+4092 q^{12}+5225 q^{13}+6479 q^{14}+7818 q^{15}+9200 q^{16}+10576 q^{17}+11900 q^{18}+13129 q^{19}+14230 q^{20}+15178 q^{21}+15963 q^{22}+16584 q^{23}+17051 q^{24}+17374 q^{25}+17564 q^{26}+17626 q^{27}+17564 q^{28}+17374 q^{29}+17051 q^{30}+16584 q^{31}+15963 q^{32}+15178 q^{33}+14230 q^{34}+13129 q^{35}+11900 q^{36}+10576 q^{37}+9200 q^{38}+7818 q^{39}+6479 q^{40}+5225 q^{41}+4092 q^{42}+3104 q^{43}+2273 q^{44}+1597 q^{45}+1068 q^{46}+673 q^{47}+396 q^{48}+215 q^{49}+106 q^{50}+46 q^{51}+17 q^{52}+5 q^{53}+q^{54}$

$p(E_8(a_7),(32))=1+4 q+12 q^2+28 q^3+53 q^4+84 q^5+118 q^6+150 q^7+175 q^8+189 q^9+192 q^{10}+182 q^{11}+158 q^{12}+118 q^{13}+65 q^{14}+3 q^{15}-62 q^{16}-128 q^{17}-190 q^{18}-246 q^{19}-292 q^{20}-326 q^{21}-345 q^{22}-354 q^{23}-356 q^{24}-354 q^{25}-345 q^{26}-326 q^{27}-292 q^{28}-246 q^{29}-190 q^{30}-128 q^{31}-62 q^{32}+3 q^{33}+65 q^{34}+118 q^{35}+158 q^{36}+182 q^{37}+192 q^{38}+189 q^{39}+175 q^{40}+150 q^{41}+118 q^{42}+84 q^{43}+53 q^{44}+28 q^{45}+12 q^{46}+4 q^{47}+q^{48}$

$p(E_8(a_7),(311))=2+9 q+25 q^{2}+54 q^{3}+97 q^{4}+150 q^{5}+208 q^{6}+267 q^{7}+326 q^{8}+386 q^{9}+449 q^{10}+514 q^{11}+577 q^{12}+631 q^{13}+668 q^{14}+681 q^{15}+668 q^{16}+631 q^{17}+577 q^{18}+514 q^{19}+449 q^{20}+386 q^{21}+326 q^{22}+267 q^{23}+208 q^{24}+150 q^{25}+97 q^{26}+54 q^{27}+25 q^{28}+9 q^{29}+2 q^{30}$

$p(E_8(a_7),(221))=1+4 q+10 q^{2}+21 q^{3}+42 q^{4}+76 q^{5}+121 q^{6}+169 q^{7}+213 q^{8}+248 q^{9}+271 q^{10}+275 q^{11}+255 q^{12}+205 q^{13}+118 q^{14}-17 q^{15}-201 q^{16}-426 q^{17}-674 q^{18}-933 q^{19}-1194 q^{20}-1453 q^{21}-1700 q^{22}-1931 q^{23}-2141 q^{24}-2330 q^{25}-2493 q^{26}-2626 q^{27}-2714 q^{28}-2746 q^{29}-2714 q^{30}-2626 q^{31}-2493 q^{32}-2330 q^{33}-2141 q^{34}-1931 q^{35}-1700 q^{36}-1453 q^{37}-1194 q^{38}-933 q^{39}-674 q^{40}-426 q^{41}-201 q^{42}-17 q^{43}+118 q^{44}+205 q^{45}+255 q^{46}+275 q^{47}+271 q^{48}+248 q^{49}+213 q^{50}+169 q^{51}+121 q^{52}+76 q^{53}+42 q^{54}+21 q^{55}+10 q^{56}+4 q^{57}+q^{58}$

$p(E_8(a_7),(2111))=1+3 q+6 q^{2}+10 q^{3}+16 q^{4}+23 q^{5}+30 q^{6}+37 q^{7}+45 q^{8}+54 q^{9}+63 q^{10}+71 q^{11}+80 q^{12}+89 q^{13}+96 q^{14}+99 q^{15}+100 q^{16}+100 q^{17}+100 q^{18}+100 q^{19}+100 q^{20}+99 q^{21}+96 q^{22}+89 q^{23}+80 q^{24}+71 q^{25}+63 q^{26}+54 q^{27}+45 q^{28}+37 q^{29}+30 q^{30}+23 q^{31}+16 q^{32}+10 q^{33}+6 q^{34}+3 q^{35}+q^{36}$

\smallskip

$p(E_8(b_4),(2))=1+2 q+3 q^{2}+3 q^{3}+3 q^{4}+2 q^{5}+q^{6}+q^{7}+2 q^{8}+2 q^{9}+2 q^{10}+q^{11}+q^{12}+2 q^{13}+3 q^{14}+3 q^{15}+3 q^{16}+2 q^{17}+q^{18}$

$p(E_8(b_4),(11))=1+2 q+3 q^{2}+4 q^{3}+5 q^{4}+6 q^{5}+7 q^{6}+8 q^{7}+9 q^{8}+9 q^{9}+9 q^{10}+8 q^{11}+8 q^{12}+8 q^{13}+8 q^{14}+8 q^{15}+9 q^{16}+9 q^{17}+9 q^{18}+8 q^{19}+7 q^{20}+6 q^{21}+5 q^{22}+4 q^{23}+3 q^{24}+2 q^{25}+q^{26}$

\smallskip

$p(E_8(b_5),(3))=1+4 q+10 q^{2}+19 q^{3}+31 q^{4}+44 q^{5}+57 q^{6}+69 q^{7}+81 q^{8}+93 q^{9}+107 q^{10}+122 q^{11}+138 q^{12}+152 q^{13}+163 q^{14}+170 q^{15}+175 q^{16}+177 q^{17}+178 q^{18}+177 q^{19}+176 q^{20}+175 q^{21}+176 q^{22}+177 q^{23}+178 q^{24}+177 q^{25}+175 q^{26}+170 q^{27}+163 q^{28}+152 q^{29}+138 q^{30}+122 q^{31}+107 q^{32}+93 q^{33}+81 q^{34}+69 q^{35}+57 q^{36}+44 q^{37}+31 q^{38}+19 q^{39}+10 q^{40}+4 q^{41}+q^{42}$

$p(E_8(b_5),(21))=1+3 q+6 q^{2}+10 q^{3}+14 q^{4}+17 q^{5}+20 q^{6}+23 q^{7}+26 q^{8}+28 q^{9}+28 q^{10}+26 q^{11}+24 q^{12}+22 q^{13}+21 q^{14}+22 q^{15}+24 q^{16}+26 q^{17}+28 q^{18}+28 q^{19}+26 q^{20}+23 q^{21}+20 q^{22}+17 q^{23}+14 q^{24}+10 q^{25}+6 q^{26}+3 q^{27}+q^{28}$

$p(E_8(b_5),(111))=1+3 q+5 q^{2}+6 q^{3}+6 q^{4}+4 q^{5}+q^{6}-2 q^{7}-3 q^{8}+7 q^{10}+14 q^{11}+17 q^{12}+14 q^{13}+7 q^{14}-3 q^{16}-2 q^{17}+q^{18}+4 q^{19}+6 q^{20}+6 q^{21}+5 q^{22}+3 q^{23}+q^{24}$

\smallskip

$p(E_8(b_6),(3))=1+4 q+10 q^{2}+19 q^{3}+31 q^{4}+43 q^{5}+54 q^{6}+63 q^{7}+69 q^{8}+71 q^{9}+69 q^{10}+63 q^{11}+54 q^{12}+43 q^{13}+31 q^{14}+19 q^{15}+10 q^{16}+4 q^{17}+q^{18}$

$p(E_8(b_6),(21))=1+4 q+9 q^{2}+15 q^{3}+22 q^{4}+30 q^{5}+39 q^{6}+49 q^{7}+60 q^{8}+70 q^{9}+78 q^{10}+83 q^{11}+86 q^{12}+88 q^{13}+89 q^{14}+88 q^{15}+86 q^{16}+83 q^{17}+78 q^{18}+70 q^{19}+60 q^{20}+49 q^{21}+39 q^{22}+30 q^{23}+22 q^{24}+15 q^{25}+9 q^{26}+4 q^{27}+q^{28}$

$p(E_8(b_6),(111))=1+2 q+2 q^{2}-q^{3}-7 q^{4}-16 q^{5}-24 q^{6}-28 q^{7}-26 q^{8}-21 q^{9}-18 q^{10}-21 q^{11}-26 q^{12}-28 q^{13}-24 q^{14}-16 q^{15}-7 q^{16}-q^{17}+2 q^{18}+2 q^{19}+q^{20}$

\ifx\undefined\bysame
\newcommand{\bysame}{\leavevmode\hbox to3em{\hrulefill}\,}
\fi


\begin{thebibliography}{9999}

\bibitem[BV]{BV}
D.~Barbasch, D.A.~Vogan,~Jr.,
\emph{Primitive ideals and orbital integrals in complex exceptional
  groups}, J.~Algebra {\bf 80} (1983), 350--382.

\bibitem[Bo]{B}
A.~Borel,
\emph{Admissible representations of a semisimple p-adic group over a local field with vectors fixed
under an Iwahori subgroup}, Invent. Math. {\bf 35} (1976), 233--259.

\bibitem[BKV]{BKO}
R.~Bezrukavnikov, D.~Kazhdan, Y.~Varshavsky,
\emph{On the stable center conjecture}, preprint, \texttt{arXiv:1307.4669}.

\bibitem[Bou]{Bou}
N.~Bourbaki, 
\emph{Groupes et alg\`ebres de Lie}, Ch. 4--6, 1981.

\bibitem[BHK]{BHK}
C.J.~Bushnell, G.~Henniart, P.C.~Kutzko, \emph{Types and explicit
  Plancherel formulae for reductive p-adic groups}, Clay
Math. Inst. Proc.  {\bf 13} (2011), 55--80.

\bibitem[BW]{BW}
C.~Baltera, W.~Wang,
\emph{Coinvariant algebras and fake degrees for spin Weyl groups of classical type}, Math. Proc. Cambridge Philos. Soc. {\bf 156} (2014), no. 1, 43--79. 

\bibitem[Ca1]{Ca}
R.W.~Carter,
\emph{Finite groups of Lie type}, Pure and Applied Math. (New York), Wiley-Interscience, NY (1985), xii+544pp.

\bibitem[Ca2]{Ca2}
R.W.~Carter,
\emph{Conjugacy classes in the Weyl group}, Compositio~Math. {\bf 25}
(1972), no. 1, 1--59.



\bibitem[Ci]{C}
D.~Ciubotaru,
\emph{Spin representation of Weyl groups and the Springer
  correspondence}, J.~Reine Angew. Math. {\bf 671} (2012), 199--222.


\bibitem[CH]{CH}
D.~Ciubotaru, X.~He,
\emph{Green polynomials of Weyl groups, elliptic pairings, and the
  extended Dirac index}, preprint, \texttt{arXiv:1303.6806}.

\bibitem[CK]{CK} D.~Ciubotaru, S.~Kato,  
\emph{Tempered modules in the exotic Deligne-Langlands correspondence},
 Adv. Math. {\bf 226} (2011), no. 2, 1538--1590.


\bibitem[CKK]{CKK}
D.~Ciubotaru, M.~Kato, S.~Kato,
\emph{On characters and formal degrees of discrete series of classical
affine Hecke algebras}, Invent. Math. {\bf 187} (2012), 589--635. 

\bibitem[CO]{CO}
D.~Ciubotaru, E.M.~Opdam,
\emph{A uniform classification of the discrete spectrum of affine
  Hecke algebras}, in preparation.

\bibitem[COT]{COT}
D.~Ciubotaru, E.M.~Opdam, P.~Trapa,
\emph{Algebraic and analytic Dirac induction for graded affine Hecke
  algebras},  J. Inst. Math. Jussieu {\bf 13} (2014), no. 3, 447--486.


\bibitem[CT]{CT}
D.~Ciubotaru, P.~Trapa,
\emph{Characters of Springer representations on elliptic conjugacy
  classes}, Duke Math. J. {\bf 162} (2013), no. 2, 201--223.


\bibitem[DL]{DL}
P.~Deligne, G.~Lusztig,
\emph{Representations of reductive groups over finite fields},
Ann.~Math. {\bf 103} (1976), 103--161.

\bibitem[DO]{DO}
P.~Delorme, E.~Opdam,
\emph{Schwartz algebra of an affine Hecke algebra}, J. Reine
Angew. Math. {\bf 625} (2008), 59-–114. 

\bibitem[Di]{Dix}
J.~Dixmier,
\emph{$C^*$-algebras}, North-Holland Math. Library, vol. {\bf 14} (1977).

\bibitem[GR]{GR}
B.~Gross, M.~Reeder, 
\emph{Arithmetic invariants of discrete Langlands parameters},
Duke~Math.~J. {\bf 154} (2010), 431--508.

\bibitem[GNS]{GNS}
A.~Gyoja, K.~Nishiyama, H.~Shimura,
\emph{Invariants for representations of Weyl groups and two-sided
  cells}, J.~Math.~Soc.~Japan {\bf 51} (1999), no. 1, 34 pp.
 

\bibitem[HO1]{HO1}
G.J.~Heckman, E.M.~Opdam, \emph{Harmonic analysis for affine Hecke algebras}, 
Current Developments in Mathematics (S.-T. Yau, editor),  Intern. 
Press, Boston, 1996. 

\bibitem[HO2]{HO2}
G.J.~Heckman, E.M.~Opdam,
\emph{Yang's system of particles and Hecke algebras}, Ann. Math. {\bf
  45} (1997), 139--173.

\bibitem[HII]{HII}
K.~Hiraga, A.~Ichino, T.~Ikeda, 
\emph{Formal degrees and adjoint gamma factors},
J.~Amer.~Math.~Soc. {\bf 21} (2008), no. 1, 283--304.

\bibitem[Ka]{Ka}
S.-I. Kato,
\emph{A realization of irreducible representations of affine Weyl groups}, Indag. Math. {\bf 45} (1983), 193--201.

\bibitem[KL]{KL}
D.~Kazhdan, G.~Lusztig,
\emph{Proof of Deligne-Langlands conjecture for Hecke algebras},
Invent. Math. {\bf 87} (1987), 153--215.


\bibitem[KmL]{KmL}
J.-L.~Kim, G.~Lusztig,
\emph{On the characters of unipotent representations of s semisimple
  $p$-adic group}, Represent. Theory {\bf 17} (2013), 426--441.


\bibitem[Lu1]{L1}
G.~Lusztig,
\emph{Characters of reductive groups over a finite field},
Ann.~Math. Studies {\bf 107}, Princeton Univ. Press 1984.

\bibitem[Lu2]{L2}
G.~Lusztig,
\emph{Classification of unipotent representations of simple $p$-adic
  groups},  Internat. Math. Res. Notices {\bf 11} (1995), 517--589. 

\bibitem[Lu3]{L3}
G.~Lusztig,
\emph{Classification of unipotent representations of simple $p$-adic
  groups II},  Represent. Theory  {\bf 6}  (2002), 243--289.

\bibitem[Lu4]{L4}
G.~Lusztig,
\emph{Unipotent almost characters of simple $p$-adic groups},
preprint, \texttt{arXiv:1212.6540}.

\bibitem[Lu5]{L5}
G.~Lusztig,
\emph{Unipotent almost characters of simple $p$-adic groups II}, Transform. Groups {\bf 19} (2014), no. 2, 527--547.

\bibitem[Lu6]{L6}
G.~Lusztig,
\emph{Cells in affine Weyl groups IV},
 J. Fac. Sci. Univ. Tokyo Sect. IA Math. {\bf 36} (1989), no. 2, 297--328.

\bibitem[Lu7]{L7}
G.~Lusztig,
\emph{Intersection cohomology complexes on a reductive group}, Invent. Math. {\bf 75} (1984), 205--272.

\bibitem[Ma]{Mac}
I.G.~Macdonald,
\emph{Symmetric functions and Hall polynomials}, Clarendon Press,
Oxford, 1995.

\bibitem[Mo]{Mo}
L.~Morris,
\emph{Level zero $G$-types},
 Compositio Math. {\bf 118} (1999), no. 2, 135--157.

\bibitem[MP]{MP}
A.~Moy, G.~Prasad,
\emph{Jacquet functors and unrefined minimal $K$-types}, Comm. Math. Helv. {\bf 71} (1996), 98--121.

\bibitem[Op1]{O}
E.M.~Opdam,
\emph{On the spectral decomposition of affine Hecke algebras},
J. Math. Jussieu {\bf 3} (2004), no. 4, 531--648.


\bibitem [Op2]{O2} 
E.M.~Opdam,
\emph{The central support of the Plancherel measure of an affine Hecke
algebra}, Mosc. Math. J. {\bf 7} (2007), 723--741, 767--768.


\bibitem[Op3]{O3}
E.M.~Opdam,
\emph{Spectral correspondences for affine Hecke algebras}, preprint,
\texttt{arXiv:1310.7193}.

\bibitem[Op4]{O4}
E.M.~Opdam,
\emph{Spectral transfer morphisms for unipotent affine Hecke
  algebras}, preprint, \texttt{arXiv:1310.7790}.

\bibitem[OS1]{OS1}
E.~Opdam, M.~Solleveld,
\emph{Homological algebra for affine Hecke algebras}, Adv. Math. {\bf
  220} (2009), no. 5, 1549--1601.

\bibitem[OS2]{OS2}
E.~Opdam, M.~Solleveld,
\emph{Discrete series characters for affine Hecke algebras and their formal degrees,} Acta Math. {\bf 205} (2010), 105--187.

\bibitem[Re1]{R}
M. Reeder,
\emph{Euler-Poincar\'e pairings and elliptic representations of Weyl groups and p-adic groups},
Compositio Math. {\bf 129} (2001), no. 2, 149--181. 

\bibitem[Re2]{Re-Iw}
M.~Reeder,
\emph{On the Iwahori-spherical discrete series for $p$-adic Chevalley
  groups; formal degrees and $L$-packets}, Ann.~Sci.~\'E.N.S. {\bf
  27} (1994), 463--491.

\bibitem[Re3]{Re3}
M.~Reeder,
\emph{Formal degrees and $L$-packets of unipotent discrete series
  representations of exceptional $p$-adic groups},
J.~Reine. Angew. Math. {\bf 520} (2000), 37--93.

\bibitem[Sh]{Sh}
T.~Shoji,
\emph{On the Springer representations of the Weyl groups of classical algebraic groups}, Comm. Alg. {\bf 7} (1979), 1713--1745, 2027--2033.

\bibitem[Sl]{Sl}
K.~Slooten, 
\emph{Generalized Springer correspondence and Green functions for type B/C graded
Hecke algebras}, Adv.~Math. {\bf 203} (2006), no. 1, 34--108.

\bibitem[Sol]{S}
M.~Solleveld,
\emph{On the classification of irreducible representations of affine Hecke algebras with unequal parameters}, Represent. Theory {\bf 16} (2012), 1--87.
 
\bibitem[Som]{Som}
E.~Sommers,
\emph{Exterior powers of the reflection representation in Springer
  theory}, Transformation Groups {\bf 16} (2011), no. 3, 889--911.
  
\bibitem[Sp]{Sp}
T.A.~Springer,
\emph{A construction of representations of Weyl groups},
 Invent. Math. {\bf 44} (1978), no. 3, 279--293. 

\bibitem[Wa]{Wa}
J.-L.~Waldspurger,
\emph{Repr\'esentations de r\'eduction unipotente pour $SO(2n+1)$: quelques cons\'equences d'un article de Lusztig},
Contributions to automorphic forms, geometry, and number theory, 803--910, Johns Hopkins Univ. Press, Baltimore, MD, 2004.

\end{thebibliography}
\end{document}